
\documentclass[a4paper, twoside, 10pt]{amsart}

\usepackage[T1]{fontenc}
\usepackage[utf8]{inputenc}
\usepackage[english]{babel}
\usepackage{lmodern}
\usepackage{amssymb,amsmath,amsthm,amsfonts,mathtools}
\RequirePackage{mathrsfs}
\usepackage{tikz-cd, tikz-3dplot}
\usepackage[margin=3cm]{geometry}
\usepackage{enumitem}
\usepackage[hidelinks]{hyperref}
\usepackage{booktabs}
\usepackage[font=small, margin=40pt,figureposition=below]{caption}

\setlist[enumerate, 1]{label = (\roman*), ref = \roman*}

\newcommand*\diff{\mathop{}\!\mathrm{d}}

\newcommand{\pc}[1]{%
  \begingroup\lccode`~=`: \lowercase{\endgroup
  \edef~}{\mathbin{\mathchar\the\mathcode`:}\nobreak}%
  (%
  \begingroup
  \mathcode`:=\string"8000
  #1%
  \endgroup
  )%
}

\newcommand{\uA}{{\boldsymbol{A}}}
\newcommand{\uB}{{\boldsymbol{B}}}

\newcommand{\cA}{\mathcal{A}}
\newcommand{\cB}{\mathcal{B}}
\newcommand{\cC}{\mathcal{C}}
\newcommand{\cD}{\mathcal{D}}
\newcommand{\cE}{\mathcal{E}}
\newcommand{\cK}{\mathcal{K}}
\newcommand{\cL}{\mathcal{L}}
\newcommand{\cO}{\mathcal{O}}

\newcommand{\charfun}{\mathcal{X}}

\newcommand{\mU}{\mathfrak{U}}
\newcommand{\mX}{\mathfrak{X}}
\newcommand{\mY}{\mathfrak{Y}}

\newcommand{\fo}{\mathfrak{o}}

\newcommand{\bAA}{\mathbb{A}}
\newcommand{\CC}{\mathbb{C}}
\newcommand{\PP}{\mathbb{P}}
\newcommand{\RR}{\mathbb{R}}
\newcommand{\ZZ}{\mathbb{Z}}
\newcommand{\ZZnz}{\mathbb{Z}_{\neq{}0}}
\newcommand{\multgrp}{\mathbb{G}_\mathrm{m}}
\newcommand{\GmZ}{\mathbb{G}_{\mathrm{m},\mathbb{Z}}}

\newcommand{\Kbar}{{\overline{K}}}
\newcommand{\QQ}{\mathbb{Q}}

\newcommand{\Can}{\cC^{\mathrm{an}}}
\newcommand{\Canmax}{\cC^{\mathrm{an,max}}}

\DeclareMathOperator{\Cl}{Cl}
\DeclareMathOperator{\CH}{CH}
\DeclareMathOperator{\Div}{Div}
\DeclareMathOperator{\sdiv}{div}
\DeclareMathOperator{\Eff}{Eff}
\DeclareMathOperator{\im}{im}
\DeclareMathOperator{\ord}{ord}
\DeclareMathOperator{\Pic}{Pic}
\DeclareMathOperator{\Proj}{Proj}
\DeclareMathOperator{\Reg}{Reg}
\DeclareMathOperator{\rk}{rk}
\DeclareMathOperator{\Spec}{Spec}
\DeclareMathOperator{\supp}{supp}

\newcommand{\abs}[1]{\left\lvert#1\right\rvert}
\newcommand{\norm}[1]{\left\lVert#1\right\rVert}
\newcommand{\relmiddle}[1]{\mathrel{}\middle#1\mathrel{}} 

\newcommand{\fppf}{\mathrm{fppf}}
\newcommand{\fin}{\mathrm{fin}}
\newcommand{\irr}{\mathrm{irr}}
\newcommand{\tmax}{\mathrm{max}}

\newtheorem{theorem}{Theorem}[subsection]
\newtheorem{corollary}[theorem]{Corollary}
\newtheorem{lemma}[theorem]{Lemma}
\newtheorem{proposition}[theorem]{Proposition}

\theoremstyle{definition}
\newtheorem{definition}[theorem]{Definition}
\newtheorem{example}[theorem]{Example}
\newtheorem{remark}[theorem]{Remark}
\newtheorem{notation}[theorem]{Notation}

\makeatletter
\@ifpackageloaded{hyperref}{
  \DeclareRobustCommand{\SkipTocEntry}[5]{}

}{
  \DeclareRobustCommand{\SkipTocEntry}[4]{}
}
\makeatother

\setcounter{tocdepth}{2}

\begin{document}
\title[Integral points of bounded height on a certain toric variety]{Integral points of bounded height\\on a certain toric variety}
\author{Florian Wilsch}
\address{Leibniz Universität Hannover, Institut für Algebra, Zahlentheorie und Diskrete Mathematik, Welfengarten 1, 30167 Hannover}
\email{wilsch@math.uni-hannover.de}
\date{December 8, 2023}
\subjclass[2020]{Primary 11D45; Secondary 11G35, 14G05} 
\keywords{integral point; toric variety; Manin's conjecture}

\begin{abstract}
  We determine an asymptotic formula for the number of integral points of bounded height on a certain toric variety, which is incompatible with part of a preprint by Chambert-Loir and Tschinkel.
  We provide an alternative interpretation of the asymptotic formula we get.
  To do so, we construct an analogue of Peyre's constant $\alpha$ and describe its relation to a new obstruction to the Zariski density of integral points in certain regions of varieties. 
\end{abstract}

\maketitle

\tableofcontents

\section{Introduction}

A classical problem in Diophantine geometry is to understand the number of rational points on a given algebraic variety. For Fano varieties---that is, smooth projective varieties whose anticanonical bundle is ample---the set of rational points is expected to be Zariski dense (thus in particular infinite) as soon as it is nonempty. 
A conjecture of Manin's~\cite{MR974910} makes a more precise quantitative prediction for this setting. 
Given a Fano variety $X$ over a number field $K$, one orders the set of rational points $X(K)$ by an \emph{anticanonical height} $H\colon X(K)\to \RR_{>0}$. There might be a closed subvariety $Z\subset X$, or, more generally, a \emph{thin} 
subset $Z\subset X(K)$ of the rational points on $X$ that dominates the number of rational points of bounded height, and one should count points on its complement $V = X(K)\setminus Z$. 
In its current form (cf.\ e.g.~\cite[Conj.~1.2]{MR4472281} and~\cite{MR1032922,MR1340296,MR1679843,MR2019019} for variants and important waypoints leading to its current formulation), the conjecture predicts that the number
\[
  N_{V,H}(B) = \# \{ x \in V \mid  H(x) \le B\}
\]
of rational points of height at most $B$ conforms to the asymptotic formula
\begin{equation}\label{eq:Manin-Peyre}
  N_{V,H}(B) \sim c_{V,H} B (\log B)^{\rk\Pic X-1},
\end{equation}
where the exponent of $\log B$ is the rank of the Picard group of $X$ and $c_{V,H}>0$ is an explicit constant depending in $X$ and $H$.

\medskip

A related problem is the quantitative study of \emph{integral points}. On projective varieties, rational and integral points can be seen to coincide by clearing denominators in the solution to a homogeneous equation or, more formally, as a consequence of the valuative criterion for properness. A problem analogous to the one treated by Manin's conjecture, asking about integral points on a quasiprojective variety, is the following: Let $X$ be a smooth, projective variety over a number field~$K$ and $D$ be a reduced, effective divisor with strict normal crossings such that the log anticanonical bundle $\omega_X(D)^\vee$ is at least big. Let $\mU$ be an integral model of $U=X \setminus D$, and let $H$ be a \emph{log anticanonical height function}. The number of integral points on $\mU$ of bounded height might be dominated by points lying on an \emph{accumulating} thin set $Z\subset X(K)$, which should be excluded. What is the asymptotic behavior of the number
\begin{equation}\label{eq:integral-count-general}
  \#\{x\in \mU(\fo_K) \cap V \mid H(x)\le B\}
\end{equation}
of integral points of bounded height that (as rational points) belong to the complement $V = X(K)\setminus Z$ of accumulating subsets? 

For complete intersections of low degree compared to their dimension, this kind of problem can be studied using the circle method (e.g.~\cite{MR0150129,MR781588}). Methods such as harmonic analysis exploiting a group action can be used to study linear algebraic groups and their homogeneous spaces (e.g.~\cite{MR1230289,MR1230290,MR1381987,MR2286635,MR2488484}) as well as partial equivariant compactifications thereof (\cite{MR2999313,MR3117310} and the incomplete~\cite{arXiv:1006.3345}); the asymptotic formulas in the latter cases are interpreted in a way that is similar to the formula~\eqref{eq:Manin-Peyre} proposed by Manin and Peyre, building on the framework set out in~\cite{MR2740045}. 

Universal torsors were defined by Colliot-Thélène and Sansuc~\cite{MR899402} and were first used by Salberger~\cite{MR1679841} to count rational points on toric varieties. This method can be adapted to count integral points~\cite{10.1093/imrn/rnac048}, and in the present paper, we use it to count
integral points of bounded height on the toric variety defined as follows. Let $X_0 = \PP^1\times \PP^1 \times \PP^1$ be a product of projective lines with coordinate pairs $\pc{a_0:a_1}$, $\pc{b_0:b_1}$, and $\pc{c_0:c_1}$, and consider the two lines $l_1 = V(a_1,b_1)$ and $l_2=V(a_1,c_1)$. Let $\pi \colon X\to X_0$ be the toric variety obtained by blowing up $\PP^1\times \PP^1 \times \PP^1$ in $l_1$ and then blowing up the resulting variety in the strict transform of $l_2$. Denote by $T_0 = X_0 \setminus V(a_0a_1b_0b_1c_0c_1)$ the open torus in $X_0$ and by $T=\pi^{-1}(T_0)$ the open torus in $X$. Denote by $E_1$ and $E_2$ the two exceptional divisors above the lines $l_1$ and $l_2$ and by $M$ the preimage of the plane $V(a_0)$ parallel to them. Let
\[
  D=E_1+E_2+M, \quad U=X \setminus D,\quad\text{and}\quad  U_0 = X_0 \setminus (V(a_0)\cup l_1\cup l_2),
\]
the latter subvariety being isomorphic to $U$.
Consider the integral model
\[
  \mU = \PP^1_\ZZ \times \PP^1_\ZZ \times \PP^1_\ZZ \ \setminus \ 
  \overline{V(a_0)\cup l_1\cup l_2}
\]
of $U_0 \cong U$. An integral point on $\PP^1\times \PP^1\times \PP^1$ can be represented by three pairs $(a_0,a_1)$, $(b_0,b_1)$, $(c_0,c_1)$ of coprime integers, uniquely up to three choices of sign. Such a point lies in the complement of $V(a_0)$ if $a_0$ does not vanish modulo any prime, that is, if $a_0$ is a unit. Similarly, the point lies in the complement of $l_1$ if $a_1$ and $b_1$ do not simultaneously vanish modulo any prime, that is, if $\gcd(a_1,b_1)=1$, and analogously for $l_2$. The set of integral points on $\mU$ is thus
\begin{equation}\label{eq:integral-points-intro}
  \mU(\ZZ) = \left\{ \left(\pc{a_0:a_1},\, \pc{b_0:b_1},\, \pc{c_0:c_1}\right)\in X_0(\QQ) \ \relmiddle| 
  \substack{
    a_0,\dots,c_1\in \ZZ, \quad a_0\in\{\pm 1\}, \\
    \gcd(a_1,b_1)=\gcd(a_1,c_1)=\gcd(b_0,b_1)=\gcd(c_0,c_1)=1
  }
  \right\}.
\end{equation}
A point $P\in \mU(\ZZ)$ represented as in~\eqref{eq:integral-points-intro} lies in $T_0(\QQ)$ (resp.~in $T$ when interpreted as a point on $X$) if and only if all coordinates are nonzero, and for any such point, we set
\[
  H(P) = \abs{a_1}\max\{\abs{b_0},\abs{b_1}\}^2\max\{\abs{c_0}\abs{c_1}\}^2,
\]
which will turn out to be a log anticanonical height function on the pair $(X,D)$ (Lemma~\ref{lem:4-to-1-and-height}).

\begin{theorem}\label{thm:intro-count}
The number
  \begin{equation*}
    N(B)=\{P\in\mU(\ZZ)\cap T(\QQ)\mid H(P)\le B\}
  \end{equation*}
  of integral points of bounded height satisfies
  \begin{equation}\label{eq:result-formula}
    N(B)=c B (\log B)^2 + O(B\log B(\log\log B)^3)\text{,}
  \end{equation}
  where
  \begin{equation*}
    c=4 \prod_p \left(\left(1-\frac{1}{p}\right)^2\left(1+\frac{2}{p}-\frac{1}{p^2}-\frac{1}{p^3}\right)\right) \text{.}
  \end{equation*}
\end{theorem}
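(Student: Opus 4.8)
The plan is to parametrize the integral points explicitly using the coprimality conditions in~\eqref{eq:integral-points-intro}, convert the count $N(B)$ into a sum over integer tuples satisfying these conditions, and then evaluate that sum by repeated Möbius inversion and a careful lattice-point / summation-by-parts analysis. Concretely, the condition $a_0 \in \{\pm 1\}$ forces $|a_0| = 1$, so the height becomes $H(P) = \max\{|b_0|,|b_1|\}^2 \max\{|c_0|,|c_1|\}^2 \le B$, i.e.\ $\max\{|b_0|,|b_1|\} \max\{|c_0|,|c_1|\} \le \sqrt{B}$; the variable $a_1$ ranges (subject to $\gcd(a_1,b_1) = \gcd(a_1,c_1) = 1$) essentially freely, which is what will produce the two extra powers of $\log B$. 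So after fixing signs (contributing the factor accounting for the leading $4$, together with the $4$-to-$1$ issue from Lemma~\ref{lem:4-to-1-and-height}) one is left to estimate
\[
  \sum_{\substack{(b_0,b_1),(c_0,c_1) \\ \gcd(b_0,b_1) = \gcd(c_0,c_1) = 1 \\ \max\{|b_0|,|b_1|\}\max\{|c_0|,|c_1|\} \le \sqrt B}}
  \#\bigl\{a_1 : |a_1| \le (\text{something}),\ \gcd(a_1,b_1) = \gcd(a_1,c_1) = 1\bigr\}.
\]

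First I would handle the innermost count over $a_1$: for fixed $b_1, c_1$ the number of $a_1$ in an interval of length $L$ coprime to both is $L \cdot \frac{\varphi(\mathrm{rad}(b_1 c_1))}{\mathrm{rad}(b_1 c_1)} \cdot (\text{density factor}) + O(\text{divisor-type error})$; writing this out via Möbius over divisors of $b_1$ and of $c_1$ gives a main term proportional to a multiplicative function of $b_1, c_1$ times the length $L$, plus an error term that is $O(\tau(b_1)\tau(c_1))$ or so. The range for $a_1$ is itself controlled by the projective setup — one must be careful that $a_1$ is not unbounded: the point lies on $\PP^1$ so $(a_0,a_1) = (\pm 1, a_1)$ with $a_1$ arbitrary, but the height does not constrain $a_1$ at all, which would make $N(B)$ infinite. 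This signals that the height $H$ as written must implicitly be read with the blow-up structure of $X$, so that the relevant model $\mU$ and the condition of lying in $T$ together with the \emph{anticanonical} normalization restrict $a_1$; I would pin down this constraint precisely from Lemma~\ref{lem:4-to-1-and-height} — most likely $|a_1| \le \max\{|b_0|,|b_1|\}\max\{|c_0|,|c_1|\}$ or a similar bound coming from the exceptional divisors $E_1, E_2$ being removed — and this is genuinely the subtle modeling step.

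Granting that, the computation proceeds in stages: (1) sum over $a_1$ to get a main term of the shape $(\text{length in }a_1) \cdot g(b_1)g(c_1)$ for an explicit multiplicative $g$; (2) sum over $(b_0, b_1)$ and $(c_0, c_1)$ coprime pairs in the hyperbolic region $\max\{|b_0|,|b_1|\}\max\{|c_0|,|c_1|\} \le \sqrt B$, which by summation by parts / Dirichlet's hyperbola method yields a main term $\asymp \sqrt{B} \cdot \sqrt{B} \cdot (\log B)^2 = B(\log B)^2$ — the two logs arising from the two hyperbola summations over the $b$- and $c$-pairs respectively, each decorated with the arithmetic weight $g$ which contributes a convergent Euler product correction; (3) collect the local densities into the Euler product $\prod_p (1 - 1/p)^2(1 + 2/p - 1/p^2 - 1/p^3)$ by checking the $p$-local factor matches the product of the coprimality densities for $\gcd(a_1,b_1), \gcd(a_1,c_1), \gcd(b_0,b_1), \gcd(c_0,c_1)$; (4) track all error terms — each Möbius truncation and each summation by parts introduces losses of size $B\log B \cdot (\log\log B)^{O(1)}$, and one must verify the exponent of $\log\log B$ is at most $3$, which constrains how many divisor sums are estimated trivially versus by Shiu-type bounds.

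\textbf{The main obstacle} will be step~(2) combined with the bookkeeping of error terms: unlike the rational-point count on a toric variety where universal-torsor parametrizations make the region a clean box, here the height condition couples the $b$- and $c$-coordinates multiplicatively while the $a_1$-range couples to all of them, so one faces a genuinely three-fold nested sum with interlocking constraints, and getting the error down to $O(B\log B(\log\log B)^3)$ rather than $O(B(\log B)^{3/2})$ or worse requires squeezing each hyperbola estimate (using that $\sum_{n \le x} g(n) = C x + O(x^{1-\delta})$ with power-saving for the multiplicative $g$, or at least $Cx + O((\log x)^{O(1)})$ after partial summation against the already-logarithmic main term). I would expect to need Shiu's theorem on short sums of multiplicative functions, or an explicit Dirichlet-series / Perron argument, to keep the $\log\log$ powers under control; verifying the constant $c$ then reduces to a finite local computation at each $p$ and a clean-up of the archimedean volume (the $4$ in front), which should fall out once the arithmetic is organized correctly.
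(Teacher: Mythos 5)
Your overall plan (parametrize by the coprime tuples of \eqref{eq:integral-points-intro}, peel off the $a_1$-count, then do a hyperbola-type double sum over the $b$- and $c$-pairs) is the right skeleton and matches the paper up to the torsor reformulation: the paper passes to the universal torsor and then applies the sum-to-integral machinery of~\cite{MR2520770} (Prop.~3.9 twice, then Prop.~4.3), whereas you propose to redo the same estimates by hand with Möbius inversion and summation by parts. That substitution is a legitimate alternative — the cited propositions are essentially a packaged form of exactly the kind of arithmetic-function-weighted lattice counting you describe — but it shifts the burden of controlling the $(\log\log B)^3$ to you; the paper gets that exponent for free from the parameters $(r,s)$ in those propositions.

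The genuine gap is your modeling of the height. You read the displayed formula in the introduction as $H(P)=|a_0|\max\{|b_0|,|b_1|\}^2\max\{|c_0|,|c_1|\}^2$, note that $|a_0|=1$ makes it independent of $a_1$, and then conjecture a fix of the form $|a_1|\le\max\{|b_0|,|b_1|\}\max\{|c_0|,|c_1|\}$. That guess is incorrect, and it is not a small miss: Lemma~\ref{lem:4-to-1-and-height} (whose \emph{statement} you have and explicitly plan to consult) gives $H=|a_1|\max\{|b_0|,|b_1|\}^2\max\{|c_0|,|c_1|\}^2$, so the correct $a_1$-range is $|a_1|\le B/\bigl(\max\{|b_0|,|b_1|\}^2\max\{|c_0|,|c_1|\}^2\bigr)$. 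With your guessed bound the inner count is $\asymp\max\{|b_0|,|b_1|\}\max\{|c_0|,|c_1|\}$, and the resulting double sum is $\asymp\sum_{M_bM_c\le\sqrt B}(M_bM_c)^2\asymp B^{3/2}$, which is the wrong order of magnitude entirely — no amount of downstream bookkeeping can rescue a main term off by a power of $B$. Relatedly, your heuristic that ``$a_1$ ranges essentially freely, which is what will produce the two extra powers of $\log B$'' is the wrong intuition: once the length of the $a_1$-interval is $B/(M_b^2M_c^2)$, the two logarithms come from the hyperbola sum $\sum_{M_bM_c\le\sqrt B}1/(M_bM_c)\asymp(\log B)^2$, with $a_1$ contributing only the factor of $B$. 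Fixing the height formula (and routing the rest of the argument through it) is therefore the one non-optional correction before the rest of your plan can be carried out.
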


Thanks to the machinery in~\cite{MR2520770}, the proof is very straightforward (Sections~\ref{ssec:toric-torsor} and~\ref{ssec:toric-counting}). The main interest of the theorem lies in the shape of the asymptotic formula:
it
contradicts part of the unpublished preprint~\cite{arXiv:1006.3345} by
Chambert-Loir and Tschinkel, exemplifying a gap in a proof of which they
were already aware and due to which they no longer believed in the
correctness of their result (see Remark~\ref{rmk:gap} for more details on this issue).
Conceptually, the deviation from the asymptotic formula in op.~cit.\ can be explained
by an obstruction to the existence of integral points in a region that
was expected to dominate the number of integral points. 
More precisely, the maximal number of components of $D$ that have a common real point is part of the exponent of $\log B$ because integral points in arbitrarily small real neighborhoods of such intersections normally constitute 100\% of the total asymptotic number.

\begin{figure}
  \begin{center}
    \includegraphics{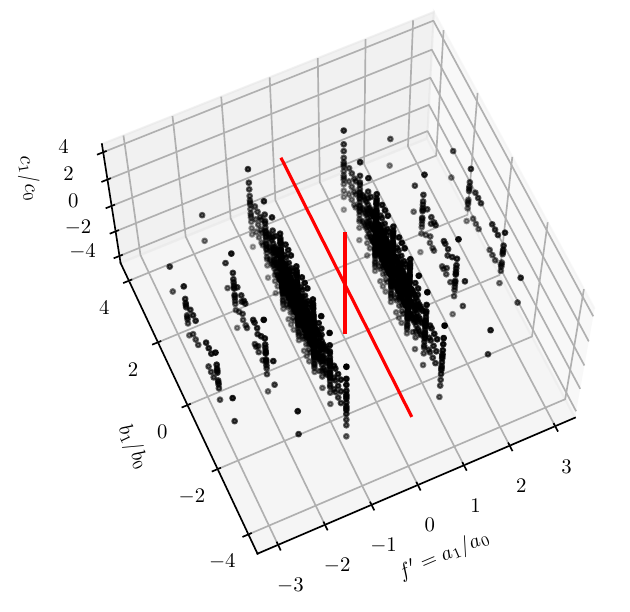}
    \caption[Integral points of height at most $9$ in $\mU(\ZZ)\cap T(\QQ)$, viewed as a subset of $\PP^1 \times \PP^1 \times \PP^1$]{Integral points of height at most $9$ in $\mU(\ZZ)\cap T(\QQ)$, viewed as a subset of $\PP^1 \times \PP^1 \times \PP^1$. The two lines $l_1$ and $l_2$ blown up are in the plane $a_1=0$. 
    By~\cite{arXiv:1006.3345}, one expects arbitrarily small neighborhoods of the intersection of the two red lines to dominate the counting function---but in fact, any sufficiently small such neighborhood contains no points counted by $N$ at all:
    as $a_1/a_0$ is an integer for all integral points, all integral points lie on ``sheets'', and all these sheets have distance $\ge 1$ from the intersection point, which corresponds to the unique \emph{maximal dimensional face of the Clemens complex}.
    (The plane $a_1/a_0=0$ defined by both lines contains integral points on $\mU$, which are not shown as they are not in $T(\QQ)$; in fact, it contains infinitely many points, all of height $1$, hence has to be discarded as an accumulating subvariety to achieve a well-defined counting function.)}\label{fig:toric-points}
  \end{center}
\end{figure}

On the toric variety $X\setminus D$, this does not hold: The function $f = a_1/a_0$ (in fact, a character of $T_0$) is regular on $U_0$ and spreads out to $\mU$; hence, it is an integer on every integral point $P\in\mU(\ZZ)$. If $P\in T_0(\QQ)$, then $f(P) \ne 0$. So $\abs{f(P)}\ge 1$ for $P\in \mU(\ZZ) \cap T_0(\QQ)$, that is, for every point $P$ counted by $N$ (Figure~\ref{fig:toric-points}).
The set $V=\pi^{-1}\{\abs{f} < 1\}$ is a neighborhood of $E_1(\RR) \cap E_2(\RR)$, and in fact even of $E_1(\RR)\cup E_2(\RR)$, in the analytic topology, as $f$ vanishes on these sets---but $V$ does not meet $\mU(\ZZ) \cap T(\QQ)$, whence cannot contribute to the counting function $N$. This leaves only the irreducible component $M$ of $D$ to contribute to the number $N(B)$ of points of bounded height. We interpret the asymptotic formula in a way that only takes this component into account (Theorem~\ref{thm:interpreted-count}), explaining the smaller power of $\log B$. Section~\ref{ssec:toric-interpretation} deals with the details of this interpretation and the comparison with loc.~cit.

\medskip

This phenomenon is an instance of a more general obstruction and related to the construction of a factor in the leading constant. 
After fixing notation and recalling the relevant definitions on Clemens complexes and the kind of Tamagawa measures appearing in the context of integral points (Section~\ref{ssec:metrics-heights-tamagawa}), we describe factors $\alpha_\uA$ associated with each \emph{maximal face} $\uA$ of the Clemens complex analogous to Peyre's constant $\alpha$, slightly generalizing a construction in~\cite{arXiv:1006.3345} to nontoric varieties (Section~\ref{ssec:divisor-group}, in particular Definition~\ref{def:effective}).

In~\eqref{eq:def-Delta_uA-U_uA}, we associate an open subvariety $U_\uA$ with every \emph{maximal face $\uA$ of the Clemens complex}. As soon as this open subvariety admits nonconstant regular functions, the set of integral points in a certain region of the variety associated with $\uA$ fails to be Zariski dense, and we say that there is an \emph{obstruction to the Zariski density of integral points near $\uA$};
in such a case, the maximal face $\uA$ cannot contribute to the counting function~\eqref{eq:integral-count-general}.
This obstruction is described and studied in detail in Section~\ref{ssec:obstruction}.
It turns out that it is closely related to an obstruction described by Jahnel and Schindler: under some assumptions on $X$ and $D$, including that they be defined over a field $K$ with only one archimedean place $\infty$, an obstruction at \emph{every} face of the archimedean Clemens complex implies that the open subvariety $X\setminus D$ is \emph{weakly obstructed at~$\infty$} in the sense of~\cite[Def.~2.2~(ii--iii)]{MR3736498} (Lemma~\ref{lem:face-obs-implies-obs-at-infinity}); in particular, its set of integral points is not Zariski dense. This allows a generalization of~\cite[Thm.~2.6]{MR3736498} to arbitrary number fields: the set of integral points on $U$ is not Zariski dense if all maximal faces of the Clemens complex are obstructed, without restrictions on $K$ (Corollary~\ref{cor:density-obstruction}), or, more generally, if a condition similar to being obstructed at infinity in the sense of op.~cit., simultaneously involving all infinite places, holds (Theorem~\ref{thm:var-density-obstruction}).

The relation between this obstruction and the construction of $\alpha_\uA$ is explored in Section~\ref{ssec:relation-obstruction-constant}: whenever this constant vanishes or some pathologies appear in its construction, the Zariski density of the corresponding set of integral points is obstructed (Theorem~\ref{thm:obstruction-construction}), providing a geometric reason for the face $\uA$ to be discarded. Finally, we briefly sketch how to take this obstruction into account when interpreting asymptotic formulas for the number of integral points of bounded height (Section~\ref{ssec:asymptotic-formulas}), Theorem~\ref{thm:interpreted-count} providing an example for this kind of interpretation.

\addtocontents{toc}{\SkipTocEntry}
\subsection*{Acknowledgements}

Part of this work was conducted as a guest at the
Institut de Mathématiques de Jussieu--Paris Rive Gauche invited by
Antoine Chambert-Loir and funded by DAAD\@.
During this time, I had interesting and fruitful discussions on
the interpretation of the result for the toric variety discussed
in Section~\ref{sec:toric} with Antoine Chambert-Loir.
I wish to thank him for these opportunities and for his useful remarks on
earlier versions of this article. Moreover, I want to thank Andrew O'Desky and the anonymous referee for several helpful remarks. This work was partly funded by
FWF grant P~32428-N35.

\section{Geometric framework}\label{sec:expectations}

Throughout this section, let $K$ be a number field, $\fo_K$ its ring of integers, $\Kbar$ an algebraic closure, $K_v$ the completion at a place $v$, and $k_v$ the residue field at a finite place $v$.
Equip the completions with the absolute values $\abs{\cdot}_v$
normalized such that
\[
  \abs{x}_v=\abs{N_{K_v/\QQ_w}(x)}_w
\]
at a place $v$ lying above a place $w$ of $\QQ$, such that $\abs{p}_p=1/p$ on $\QQ_p$, and with the usual absolute value on $\RR$. Moreover, equip each of the local fields with a Haar measure $\mu_v$ satisfying $\mu_v(\fo_{K_v})=1$ at finite places,
the usual Lebesgue measure $\diff\mu_v=\diff x$ at real places,
and $\diff\mu_v=i \diff z \diff \overline{z} = 2 \diff x \diff y$ at complex places.

We consider pairs $(X,D)$ as follows. Throughout, let $X$ be a smooth, projective, geometrically integral $K$-variety, and let $D$ be a reduced, effective divisor with strict normal crossings. Let $U=X\setminus D$, and let $\mU$ be an integral model, that is, a flat and separated $\fo_K$-scheme of finite type together with an isomorphism between its generic fiber $\mU \times_{\fo_K} K$ and $U$.
Throughout, similarly to~\cite[Déf~3.1, Hyp.~3.3]{MR2019019}, we assume that
\begin{enumerate}[label = (\arabic*)]
  \item $H^1(X,\cO_X)=H^2(X,\cO_X)=0$,
  \item the geometric Picard group $\Pic(X_\Kbar)$ is torsion free,
  \item there is a finite number of effective divisors $D_1,\dots, D_n$ that generate the pseudoeffective cone
  $\overline{\Eff}_X=\overline{\{\sum a_i D_i \mid a_i \in\RR_{\ge 0}\}} \subset {\Pic(X)}_\RR$ (which we shall also simply call the \emph{effective cone} for that reason), and
  \item the log anticanonical bundle ${\omega_X(D)}^\vee$ is big, that is, it is in the interior of the pseudoeffective cone.
\end{enumerate}
In particular, the anticanonical bundle $\omega_X^\vee$ is also big, and $X$ is \emph{almost Fano} in the sense of~\cite[Déf~3.1]{MR2019019}.

For simplicity, we will assume some form of \emph{splitness} of the pair $(X,D)$: that the canonical homomorphism $\Pic(X)\to\Pic(X_\Kbar)$ is an isomorphism and that all irreducible components of $D_\Kbar$ are defined over $K$. This assumption on $D$ is weaker than the pair $(X,D)$ being split in the sense of~\cite{MR3732687}.

To fix further notation, for an open subvariety $V\subset X$, we let
\[
  E(V)=\cO_X(V)^\times/K^\times
\]
be the finitely generated abelian group of invertible regular functions on $V$ up to constants.

\subsection{Metrics, heights, Tamagawa measures, and Clemens complexes}\label{ssec:metrics-heights-tamagawa}

To make this section self-contained, we begin by briefly recalling several definitions needed for the geometric interpretation of asymptotic formulas, as found for example in~\cite{MR2019019,MR2740045}.

\subsubsection{Adelic metrics}
Several of the invariants appearing in the interpretation of asymptotics formulas often depend on a choice of an \emph{adelic metric} on one or several line bundles $\cL$, that is, a family of norm functions
$\norm{\cdot}_v\colon \cL(x_v) \to \RR_{\geq 0}$
on the fibers $\cL(x_v) = x_v^* \cL$ above every local point $x_v\in X(K_v)$ that varies continuously with the points $x_v$ and is \emph{induced by a model} at almost all places~\cite[Ex.~2.2, Déf.~2.3]{MR2019019}.

  In an application of the torsor method, we shall make use of the following standard construction of an adelic metric on a base point free bundle $\cL$:
  a set $(s_0,\dots,s_n)$ of sections of $\cL$ without a common base point defines a morphism
  $f\colon X \to \PP^n$, $x\mapsto (s_0(x):\dots:s_n(x))$, and we set
  \[
    \norm{s(x)}_v =
    \min \left\{
      \abs{\frac{s(x)}{s_0(x)}}_v,
      \dots,
      \abs{\frac{s(x)}{s_n(x)}}_v
    \right\} 
  \]
  for a $K_v$-point $x=(x_0:\dots:x_n)$ and a local section $s$ of $\cL$. $\cO_{\PP^n}(1)$.
  Furthermore, adelic metrics $\norm{\cdot}_{\cL_1}, \norm{\cdot}_{\cL_2}$ on
  two line bundles $\cL_1$, $\cL_2$ induce an adelic metric on their quotient 
  $\cL_1 \otimes \cL_2^\vee$ by setting
  \[
    \norm{g(x)}_{v}
    = \frac{\norm{(g\otimes f_2)(x)}_{\cL_1,v}}{\norm{f_2(x)}_{\cL_2,v}}
  \]
  for a local section $g$ of $\cL_1 \otimes \cL_2^\vee$, which is independent of 
  a choice of a nonvanishing local section $f_2$ of $\cL_2$.

\subsubsection{Heights}
Such an adelic metric $\norm{\cdot}$  induces a height function
\[
  H_{\cL,\norm{\cdot}}\colon X(K)\to \RR_{\geq 0},
  \quad x\mapsto \prod_v \norm{s(x)}^{-1}_v\text{,}
\]
where $s$ is an arbitrary section that does not vanish in $x$~\cite[Déf~2.3]{MR2019019}.
If $\cL$ is ample, the number $\#\{x\in X(K)\mid H_\cL(x)\leq B\}$ of rational points of bounded height is finite for any $B\ge 0$. This still holds outside a closed subvariety if $\cL$ is big.

\begin{example}
In order to construct a height associated with an arbitrary (not necessarily base point free) line bundle $\cL$, something that we shall need in the second section, we can write the bundle as a quotient $\cL= \cA\otimes \cB^{-1}$ of base point free bundles.
Choosing sets $a_0,\dots,a_r$ and $b_0,\dots,b_s$ of global sections of $A$ and $B$ without a common base point induces morphisms
$f_1\colon X\to \PP^r$ and $f_2\colon X\to \PP^s$, respectively, as well as metrics on the two bundles.
Then
\[
  H_\cL=\prod_v \frac{\max\{\abs{a_i}_v\mid i=0,\dots,r\}}{\max\{\abs{b_j}_v\mid j=0,\dots,s\}} = \frac{H_{\PP^r}\circ f_1}{H_{\PP^s}\circ f_2},
\]
where $H_{\PP^r} = \prod_v \max\{\abs{x_0}_v,\dots,\abs{x_r}_v\}$ is the standard height on $\PP^r$, using that $\prod_v\abs{s(x)}_v=1$ for any section $s$ not vanishing in the respective image of $x$.

If $\cL$ is not base point free, it does not suffice to take global sections of $\cL$ and consider the maximum of their absolute values. There is, however, an inequality: If $\cL$ has global sections, take a basis $s_0,\dots,s_n$ of them. After completing $\{s_i b_j\}_{i,j}$ to a basis of the global sections of $\cA$, we get
\[
  H(x)\ge \prod_v \max_{i=0,\dots,n}\{\abs{s_i}_v\}
\]
for the height function $H$ induced by this choice of basis. If $x$ is not contained in the base locus, the right-hand side is $H_{\PP^n}(g(x))$ for the rational map $g\colon X\dashrightarrow \PP^n$ associated with $\cL$.
From this, we can recover the above fact: Assume that $\cL$ is big. 
After passing to an appropriate multiple (and taking the corresponding root of all resulting metrics and height functions), we can write $\cL=\cA\otimes \cE$, where $\cA$ is very ample and $\cE=\cO_X(E)$ is effective.
Endow $\cA$ and $\cE$ with metrics according to the above constructions and $\cL$ with the product metric. Let $f\colon X \hookrightarrow \PP^n$ be the resulting closed immersion corresponding to $\cA$ and $g\colon X\dashrightarrow \PP^m$ be the resulting rational map corresponding to~$\cE$. If $x\in (X\setminus E)(\QQ)$, then $g(x)$ is defined, and in particular $H_\cE(x) \ge H_{\PP^m}(g(x))\ge 1$; if moreover $H_\cL(x)\le B$, then
\[
  H_{\PP^n}(f(x)) = \frac{H_\cL(x)}{H_\cE (x)} \le B,
\]
making the number of rational points on $X\setminus E$ of height at most $B$ finite.
\end{example}

\subsubsection{Tamagawa measures}
A Tamagawa measure, appearing in asymptotic formulas for the number of rational points of bounded height, is a Borel measure $\tau_{X,v}$ on $X(K_v)$ for a place $v$, induced by an adelic metric on the canonical bundle $\omega_X$.
In local coordinates $x_1,\dots,x_n$, it is defined as
\[
  \diff \tau_{X,v}
  = \frac{\diff x_1 \cdots \diff x_n}{\norm{\diff x_1 \wedge \dots \wedge \diff x_n}_v},
\]
with respect to the fixed Haar measure on $K_v^n$~\cite[Not.~4.3]{MR2019019}. This is independent of the choice of local coordinates by the change of variables formula.

When counting integral points, variants of this measure induced by a metric on the log anticanonical bundle $\omega_X(D)$ take its place:
\begin{equation}\label{eq:tamagawa-finite}
  \diff \tau_{(X,D),v} = \frac{\diff x_1 \cdots \diff x_n}{\norm{1_D \otimes \diff x_1 \wedge \dots \wedge \diff x_n}_{v}}
\end{equation}
and its restriction to $U(K_v)$~\cite[\S~2.1.9]{MR2740045}. Here, $1_D$ denotes the canonical section of $\cO_X(D)$, corresponding to $1$ under the canonical embedding $\cO_X(D)\to \cK_X$. 
For a finite place $v<\infty$, the set $\mU(\fo_v)$ is compact, so the norm $\norm{1_D}^{-1}_v$ is bounded on it, and its volume is finite.
Moreover, these volumes verify
\[
  \tau_{(X,D),v}(\mU(\fo_v)) = \frac{\#\mU(k_v)}{(\#k_v)^{\dim X}}
\]
at almost all places (\cite[\S~2.4.1 with \S~2.4.3]{MR2740045} going back to Weil~\cite{MR670072}).
These measures are multiplied with convergence factors associated with the Galois representations $\Pic(U_{\Kbar})$ and $E(U_\Kbar)$~\cite[Def.~2.2]{MR2740045}. Our splitness assumption implies that the Galois action on both modules is trivial, and we get the powers
\[
  \left(1-\frac{1}{\#k_v}\right)^{\rk\Pic(U)-\rk E(U)}
\]
of the local factors at $s=1$ of the Dedekind zeta function $\zeta_K$ of $K$. These make the product
\[
  \prod_{v < \infty} \left(1-\frac{1}{\#k_v}\right)^{\rk\Pic(U)-\rk E(U)} \tau_{(X,D),v}(\mU(\fo_v))
\]
absolutely convergent~\cite[Thm.~2.5]{MR2740045}. Finally, this product is multiplied with the principal value of the corresponding $L$-function~\cite[Def.~2.8]{MR2740045}, in this case $\rho_K^{\rk\Pic U - \rk E(U)}$, where
\[
  \rho_K=\frac{2^r (2\pi)^s \Reg_K h_k}{\# \mu_K \sqrt{\abs{d_K}}}
\]
is the principal value of the Dedekind zeta function, with the numbers $r$ and $s$ of real and complex places, the regulator $\Reg_K$, the class number $h_K$, the group $\mu_K$ of roots of unity, and the discriminant $d_K$ of $K$.

\subsubsection{Clemens complexes}\label{sssec:Clemens-complexes}

Often, there are simple topological reasons for which integral points cannot evenly distribute along $U(K_v)$ for archimedean places $v$. For instance, if $K=\QQ$ and $U$ is affine, its integral points are lattice points, which form a discrete subset of $U(\RR)$; however, regarded as a subset of the compact set $X(\RR)$, the set of integral points must have accumulation points as soon as it is infinite, which consequently lie on the boundary $D(\RR)$.
Phenomena like this can be observed in many cases: integral points tend to accumulate near the boundary, with ``more'' points lying near intersections of several components of the boundary divisor. For this reason, combinatorial data on the boundary, encoded in Clemens complexes~\cite[\S~3.1]{MR2740045}, appears in asymptotic formulas for the number of integral points of bounded height.

The \emph{geometric Clemens complex} $\cC_\Kbar(D)$ is a partially ordered set defined as follows: Let $\cA$ be an index set for the set of irreducible components of $D$ (which are the same as the irreducible components of $D_\Kbar$ by our assumptions); denote by $D_\alpha$ the irreducible component of $D$ corresponding to $\alpha\in \cA$, and, for any $A\subset\cA$, by $Z_A$ the intersection $\bigcap_{\alpha\in A}D_\alpha$.
Then the geometric Clemens complex consists of all pairs $(A,Z)$, such that $A$ is a subset of $\cA$, and $Z$ is an irreducible component of $(Z_A)_\Kbar$.
Its ordering is given by $(A,Z)\preceq(A',Z')$ if $A\subset A'$ and $Z\supset Z'$. The dimension of a \emph{face} $(A,Z)\in \cC_\Kbar(D)$ is the longest length $n$ of a chain
\[
  (\emptyset, X) \prec (A_0,Z_0) \prec ...\prec (A_n,Z_n) = (A,Z)
\]
of strict inclusions.
In other words, we add a vertex (of dimension $0$) for every component of $D$; if the intersection of a set of components is nonempty, we glue one simplex to the corresponding set of vertices for every geometric component of the intersection. In the following, we will often suppress $Z$ from the notation.

For an archimedean place $v$, we will also be interested in the \emph{$K_v$-analytic Clemens complex} $\Can_{v}(D)$. It is the subset of $\cC_\Kbar(D)$ consisting of all pairs $(A,Z)$ such that $Z$ is defined over $K_v$ and has a $K_v$-rational point. (Note that this depends on $v$ and not just on the isomorphism class of $K_v$.) By the splitness assumption in the beginning of this section, we have the following:

\begin{lemma}
  If a face $(A,Z)$ of the geometric Clemens complex is part of the $K_v$-analytic Clemens complex $\Can_{v}(D)$, then so are all of its subfaces $(A^\prime,Z^\prime)$.
\end{lemma}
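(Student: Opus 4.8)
The plan is to reduce the statement to a single geometric fact: since $D$ has \emph{strict} normal crossings, every stratum $Z_{A'}=\bigcap_{\alpha\in A'}D_\alpha$ is smooth over $K$, hence is the disjoint union of its irreducible components. First I would spell out what a subface is: $(A',Z')\preceq(A,Z)$ means $A'\subseteq A$ and $Z'\supseteq Z$, and $A'\subseteq A$ forces $Z_{A'}\supseteq Z_A\supseteq Z$, so the irreducible subvariety $Z$ lies inside $Z_{A'}$. As $(Z_{A'})_\Kbar$ is smooth over $\Kbar$, it is the disjoint union of its (geometrically) irreducible components, and the irreducible $Z$ therefore lies in exactly one of them. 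Thus, for each $A'\subseteq A$, there is a \emph{unique} component $Z'$ of $(Z_{A'})_\Kbar$ with $(A',Z')\preceq(A,Z)$, and it is characterised as the unique irreducible component of $(Z_{A'})_\Kbar$ containing $Z$. The same reasoning applies after base change to $\overline{K_v}$: by the splitness hypothesis the divisors $D_\alpha$, and hence the strata $Z_A$ and $Z_{A'}$, are defined over $K$ (a fortiori over $K_v$), and because the components of $(Z_A)_\Kbar$ are geometrically irreducible they remain irreducible over $\overline{K_v}$ and stay in bijection with the components of $(Z_A)_{\overline{K_v}}$; so I may freely identify $Z$ and $Z'$ with components over $\overline{K_v}$.

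Granting this, the two conditions defining $\Can_v(D)$ for the subface $(A',Z')$ follow at once. For the existence of a $K_v$-rational point: by hypothesis $Z$ has one, say $P$, and then $P\in Z'(K_v)$ because $Z\subseteq Z'$. For $Z'$ being defined over $K_v$: let $\sigma\in\mathrm{Gal}(\overline{K_v}/K_v)$; since $Z_{A'}$ is defined over $K_v$, the image $\sigma(Z')$ is again an irreducible component of $(Z_{A'})_{\overline{K_v}}$, and it contains $\sigma(Z)=Z$ because $Z$ is defined over $K_v$ (this is part of the assumption $(A,Z)\in\Can_v(D)$); by the uniqueness established above, $\sigma(Z')=Z'$. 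Hence $Z'$ is $\mathrm{Gal}(\overline{K_v}/K_v)$-stable, i.e.\ defined over $K_v$, so $(A',Z')\in\Can_v(D)$.

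I do not anticipate a genuine obstacle here; this is a short bookkeeping lemma. The one point that needs care is keeping track of irreducible components across the base changes $K\to\Kbar$, $K\to K_v$, and $K_v\to\overline{K_v}$ — which works precisely because smoothness (from \emph{strict} normal crossings) together with splitness makes those components geometrically irreducible and the strata $Z_A$ defined over $K$. One should also fix at the outset the convention that ``$Z$ is defined over $K_v$'' means the corresponding component of $(Z_A)_{\overline{K_v}}$ is stable under $\mathrm{Gal}(\overline{K_v}/K_v)$, equivalently that it descends to $Z_A\times_K K_v$; with this convention the argument above is complete. Note that none of the hypotheses on $\Pic$, on $H^i(X,\cO_X)$, on the effective cone, or on bigness of $\omega_X(D)^\vee$ enter — only the transversality of $D$ and the splitness of its components are used.
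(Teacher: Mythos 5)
Your proof is correct and rests on the same geometric fact as the paper's: strict normal crossings makes the stratum $Z_{A'}$ smooth, so its geometric irreducible components are disjoint, and $Z$ lies in a unique one, which is therefore Galois-stable and contains the $K_v$-point of $Z$. The paper phrases the final step as a contradiction (a $K_v$-point on two distinct conjugates would be a singular point of $Z_{A'}$) whereas you argue directly from uniqueness, but this is a presentational difference, not a different route.
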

\begin{proof}
  Such a subface is given by data $A^\prime=\{D_1,\dots,D_r\}\subset A$ and  an irreducible component $Z^\prime \subset Z_{A^\prime}$ with $Z^\prime\supset Z$.
  Since $Z(K_v)\ne\emptyset$, it contains a $K_v$-point $P$, that is, a point $P$ invariant under the action of the Galois group of $K_v$; since $Z^\prime\supset Z$, the point $P$ also lies on $Z^\prime$.
  For contradiction, assume now that $Z^\prime$ is not defined over $K_v$. 
  Since the $D_i$ are all defined over $K$, they are invariant under the Galois action, and hence so is $Z_{A'}$; it follows that all conjugates $^\sigma\!Z^\prime$ of $Z^\prime$ are irreducible components of $Z_{A^\prime}$ as well. As $P$ is contained in the intersection of all conjugates (and there is more than one by the assumption), $Z_{A^\prime}$ is singular in $P$, and $D$ cannot have strict normal crossings.
\end{proof}

Observe that, since $Z_A$ is smooth for every $A\subset \cA$, the set $Z_A(K_v)$ is a smooth $K_v$-manifold.
We are interested in maximal faces of the analytic Clemens complex with respect to the ordering (i.e., \emph{facets}), corresponding to minimal strata of the boundary; such faces need not be maximal-dimensional, that is, maximal with respect to their number of vertices. Speaking geometrically, maximal faces are faces $(A,Z)$ such that $Z(K_v)$ intersects no other divisor component $D_\alpha(K_v)$, $\alpha\not\in A$.  We denote the set of maximal faces by $\Canmax_v(D)$; if the $K_v$-analytic Clemens complex is empty at a place $v$---that is, if $D(K_v)=\emptyset$---then the empty set is its unique maximal face.

\subsubsection{Archimedean analytic Clemens complexes}

Finally, we shall often be interested in all archi\-medean places at the same time. To this end, we define the \emph{archimedean analytic Clemens complex} 
to be the product (or \emph{join})
\[
  \Can_\infty(D) = \prod_{v\mid\infty} \Can_v(D)
\]
of all $K_v$-analytic Clemens complexes, that is, the product set with the induced partial order. In particular, a face $\uA\in\Can_\infty(D)$ is a tuple $\uA=(A_v)_{v\mid\infty}$ of faces $A_v\in\Can_v(D)$ and has dimension $\sum_v \#A_v -1$. 
Associated with such a face is the set
\begin{equation}\label{eq:Z_uA}
  Z_\uA = \prod_{v\mid\infty} Z_{A_v}(K_v);
\end{equation}
it is a closed subset of $X(K_\RR) = \prod_{v\mid\infty} X(K_v)$.

Finally, note that a face $\uA$ is maximal if and only if all the $A_v$ are; again, we denote by $\Canmax_\infty(D)$ the set of maximal faces.

\begin{remark}
  There are conflicting conventions on whether to include the empty face $(\emptyset, X)$ in such complexes. 
  Here, we include it, as a face of dimension $-1$. For instance, this convention is necessary to neatly deal with the case $D=0$, in which studying the empty face of the Clemens complex recovers Manin's conjecture on rational points; moreover, it facilitates the statement of some theorems that follow, in which the empty face is an interesting special case.
\end{remark}

\subsubsection{The measure associated with a maximal face}\label{sssec:residue-measure}

Let $v$ be an archimedean place, and let $A\in\Canmax_v(D)$ be a maximal face of the $K_v$-analytic Clemens complex, that is, a maximal subset of the irreducible components whose intersection $Z_A$ has a $K_v$-rational point.
Let
\[
  D_A = \sum_{\alpha \in A} D_\alpha \qquad \text{and} \qquad \Delta_A=D-D_A
\]
be the sum of divisors corresponding to $A$ and their ``complement'', respectively. We are interested in a measure $\tau_{Z_A}$ on $Z_A(K_v)$ defined as follows~\cite[\S\,2.1.12]{MR2740045}: A metric on $\omega_X(D_A)$ defines a metric on $\omega_{Z_A}$ and thus a Tamagawa measure $\tau$ on $Z_A(K_v)$ by repeated use of the adjunction isomorphism (since $D$ is assumed to have strict normal crossings). We are interested in the modified measure
$\norm{1_{\Delta_A}}^{-1}_{\cO(\Delta_A),v}\tau$.
This measure only depends on the metric on the log canonical bundle $\omega_X(D)$: this metric induces one on $\omega_{Z_A} \otimes \cO_X(\Delta_A)|_{Z_A}$ via the adjunction isomorphism, and the above measure is equal to
\[
  \norm{1_{\Delta_A}\big|_{Z_A} \otimes \diff x_1 \wedge \dots \wedge \diff x_s}^{-1}_{\omega_{Z_A} \otimes \cO_X(\Delta_A)|_{Z_A},v} \diff x_1 \cdots \diff x_s,
\]
for local coordinates $x_1,\dots, x_s$ on $Z_A(K_v)$. Note that the maximality of $A$ guarantees that $\norm{1_{\Delta_A}}_v^{-1}$ does not have a pole on $Z_A(K_v)$, and is thus bounded on the compact set $Z_A(K_v)$.
These measures are further renormalized by a factor $c_{K_v}^{\# A}$, where $c_\RR=2$, resp. $c_\CC=2\pi$, is the volume of the unit ball in the archimedean local field with respect to the Haar measure we are using.
This results in the \emph{residue measure}
\[
  \tau_{Z_A} = c_{K_v}^{\# A} \norm{1_{\Delta_A}}^{-1}_{\cO(\Delta_A),v}\tau
\]
on $Z_A(\RR)$. See~\cite[\S\S\,3.3.1,~4.1]{MR2740045} for more details.

\begin{remark}
  Let $\uA \in \Canmax_\infty(D)$ be a maximal face of the archimedean analytic Clemens complex.
  The above constructions furnish a finite measure
  \[
    \tau_\uA = 
    \prod_{v < \infty} \left(1-\frac{1}{\#k_v}\right)^{\rk\Pic(U)-\rk E(U)} \tau_{U,v}
    \times \prod_{v \mid \infty} \tau_{Z_{A_v},v} 
  \]
  on the subset
  \[
    \prod _{v<\infty} \mU(\fo_v) \times \prod_{v\mid\infty}Z_{A_v}(K_v) \subset X(\bAA_K)
  \]
  of adelic points. Often, integral points of bounded height \emph{equidistribute} towards the sum of (suitably normalized) measures on a disjoint union of such sets: usually that over all \emph{maximal dimensional} faces, but Sections~\ref{ssec:obstruction} and~\ref{sec:toric} provide a reason for which this can fail. Chambert-Loir and Tschinkel provide an abstract theorem that allows such an equidistribution theorem to be deduced from sufficiently general counting theorems~\cite[Prop.~2.10]{MR2740045}, generalizing work of Peyre's~\cite[Prop.~5.4]{MR2019019} in the setting of rational points.
\end{remark}

\subsection{A divisor group and the constants \texorpdfstring{$\alpha_\uA$}{alpha A}}\label{ssec:divisor-group}

We associate some data with tuples of maximal faces analogous to groups defined by Chambert-Loir and Tschinkel for toric varieties~\cite[\S\,3.5]{arXiv:1006.3345}, using the full set of divisors instead of invariant ones:

\begin{definition}
  Let $\uA\in\Canmax_\infty(D)$.
  \begin{enumerate}
    \item An \emph{$\uA$-divisor} $L$ is a tuple $(L_U, (L_v)_{v\mid\infty})$, where $L\in \Div(U)$ is a divisor on $U$ and $L_v \in \Div(X)$ are divisors supported on $D_{A_v}$, that is, linear combinations of $D_\alpha$ with $\alpha\in A_v$. Denote by 
    \[
    \Div(U;\uA)\cong\Div(U) \oplus \bigoplus_{v\mid\infty} \ZZ^{A_v}
    \]
    the \emph{group of $\uA$-divisors}.

    \item Let  $\sdiv_\uA\colon \cK_X \to \Div(U;\uA)$ be the map associating with a rational function $f$ its corresponding \emph{principal $\uA$-divisor}
    \[
      \sdiv_\uA(f) = \left(\sdiv_U(f),\ \left(\sum_{\alpha\in A_v} \ord_{D_\alpha}(f) D_\alpha\right)_v \, \right).
    \]
    \item Finally, denote by
    \[
        \Pic(U;\uA)=\Div(U;\uA)/\im(\sdiv_\uA)
    \]
    the \emph{group of $\uA$-divisor classes}.
  \end{enumerate}
\end{definition}

\begin{notation}
  Since $\sdiv_\uA$ is compatible with the standard divisor function, the pullback homomorphism $i^*\colon \Pic(X) \to \Pic (U)$ along the inclusion $i\colon U\to X$ factors through $\Pic(U;\uA)$ as follows.
  The first homomorphism
  \begin{equation}
      \label{eq:pi_A} \pi_\uA\colon \Pic(X)\to\Pic(U;\uA)
  \end{equation}
  maps the class $[L]$ of an irreducible divisor $L\in \Div(X)$ to
  \[
    \pi_{\uA}([L]) = [(L\cap U,(\delta_{L\in A_v} L)_v)], 
  \]
  where $\delta_{L\in A_v}$ is $1$ if $L$ is a component of $D$ and belongs to the maximal face $A_v$ and $0$ otherwise. The second homomorphism
  \begin{equation}
      \label{eq:sigma_A} \sigma_\uA\colon\Pic(U;\uA)\to\Pic(U)
  \end{equation}
  maps the class of an $\uA$-divisor $(L_U, (L_v)_v)$ to the class
  \[
    \sigma_{\uA}([L_U, (L_v)_v]) = [L_U]
  \]
  of $L_U$ in $\Pic U$.
  If $K$ has only one archimedean place and $A\in \Canmax_\infty(D)$, these constructions simplify to the Picard group $\Pic(U;A)=\Pic(U_A)$ of $U_A$ and the pullback homomorphisms $\Pic(X)\to\Pic(U_A)$ and $\Pic(U_A)\to\Pic(U)$.
  Moreover, there is an isomorphism
  \begin{align*}
    \Pic(U;\uA) &\to \left\{(L_v)_v \in \bigoplus_{v\mid\infty} \Pic(U_{A_v})\relmiddle| L_v|_{U}\cong L_{v'}|_{U} \text{ for all $v,v'\mid\infty$}\right\} \\
    [L, (L_v)_v] &\mapsto ([L+L_v])_v
  \end{align*}
  that allows an equivalent interpretation of a $\uA$-divisor class as a tuple of divisor classes (or line bundles) on $U_{A_v}$ whose restrictions to $U$ coincide. In particular, $\sigma_\uA$ factors through a canonical homomorphism
  \[
    \sigma_{\uA,v} \colon \Pic(U;\uA) \to \Pic(U_{A_v})
  \]
  for each $v\mid\infty$.
\end{notation}

Set
\begin{equation}\label{eq:def-Delta_uA-U_uA}
  \Delta_{\uA}
  =\sum_{\substack{\alpha\not\in A_v\\\text{for all } v\mid\infty}}
  D_\alpha
  \qquad \text{and} \qquad
  U_\uA=X \setminus \Delta_{\uA},
\end{equation}
so that $\Delta_\uA \subset D$ is again the ``complement'' of $\uA$. With $\Delta_{A_v}$ as before for the maximal face $A_v$ at a place $v$ and $U_{A_v}=X\setminus \Delta_{A_v}$, there are inclusions
\[
    \Delta_\uA \subset \Delta_{A_v} \subset D
    \qquad \text{and} \qquad
    U\subset U_{A_v}\subset U_\uA \subset X.
\]

\begin{lemma}\label{lem:sequences}
  The sequences
  \begin{equation}\label{eq:localization-seq}
    \begin{tikzcd}[column sep=scriptsize]
      0  \arrow[r]
      &  E(U) \arrow[r, "\sdiv"]
      &  \CH^0(D) \arrow[r, "(i_D)_*"]
      &  \Pic(X) \arrow[r, "i_U^*"]
      &  \Pic(U) \arrow[r]
      &  0,
    \end{tikzcd}
  \end{equation}
  where $i_D\colon D\to X$ and $i_U\colon U\to X$ are the inclusions,
  and
  \begin{equation}\label{eq:sequence-pic-u-a}
    \begin{tikzcd}[cramped, column sep=scriptsize]
      0 \arrow[r]
      & E(U_\uA) \arrow[r, "\cdot|_U"]
      & E(U) \arrow[r, "o"]
      & \bigoplus_{v\mid\infty} \CH^0(D_{A_v}) \arrow[r, "p"]
      & \Pic(U;\uA) \arrow[r, "\sigma_{\uA}"]
      & \Pic(U) \arrow[r]
      & 0,
    \end{tikzcd}
  \end{equation}
  where $o=(\ord_{D_\alpha})_{\alpha,v}$ and $p$ is induced by the projection $\Div(U;\uA) \to \Pic(U;\uA)$,
  are exact.
\end{lemma}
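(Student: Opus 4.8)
The strategy is to treat the two sequences separately: the first, \eqref{eq:localization-seq}, is a standard localization sequence relating the Picard group of a smooth variety to that of an open subset; the second, \eqref{eq:sequence-pic-u-a}, is obtained by splicing together localization sequences for the intermediate varieties $U_\uA$ and $U_{A_v}$ and carefully tracking how $E(\cdot)$ and $\Pic(\cdot)$ of these varieties fit into $\Pic(U;\uA)$.

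For \eqref{eq:localization-seq}: since $X$ is smooth, Cartier and Weil divisors agree, and restriction along $i_U$ induces a surjection $\Pic(X)\to\Pic(U)$ with kernel generated by the classes of the irreducible components of $D$; this is the exactness at $\Pic(X)$ and $\Pic(U)$, the group $\CH^0(D)=\ZZ^\cA$ being precisely the free group on the components of $D$ mapping in via $(i_D)_*$. Exactness at $\CH^0(D)$ amounts to saying that a combination $\sum n_\alpha D_\alpha$ is principal on $X$ precisely when it comes from a regular invertible function on $U$: if $\sdiv_X(f)=\sum n_\alpha D_\alpha$ is supported on $D$, then $f$ is a unit on $U$, giving a well-defined class in $E(U)=\cO_X(U)^\times/K^\times$; conversely any unit on $U$ has divisor on $X$ supported on $D$. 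Injectivity of $\sdiv$ on $E(U)$ is clear since a regular function on the projective $X$ with trivial divisor is constant (using that $X$ is geometrically integral and projective, so $\cO_X(X)^\times=K^\times$). I would cite~\cite[\S~2.1.9]{MR2740045} and the standard references, but the argument above is self-contained enough to include in a few lines.

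For \eqref{eq:sequence-pic-u-a}: the key is the isomorphism recorded just above the lemma, identifying $\Pic(U;\uA)$ with the fibre product of the $\Pic(U_{A_v})$ over $\Pic(U)$ along the restrictions. First I would write down, for each $v\mid\infty$, the localization sequence of the pair $(U_\uA, U_{A_v})$, whose boundary terms involve $E(U_{A_v})$, $E(U_\uA)$ and $\CH^0(\Delta_{A_v}-\Delta_\uA)=\CH^0(D_{A_v})$ (the components removed in passing from $U_{A_v}$ to $U_\uA$ are exactly those in $A_v$, since $\Delta_\uA=\sum_{\alpha\notin A_v\,\forall v}D_\alpha$ and $\Delta_{A_v}=D-D_{A_v}$). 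Splicing these for all $v$ and composing with the fibre-product description of $\Pic(U;\uA)$: the map $o=(\ord_{D_\alpha})_{\alpha,v}$ records, for a unit on $U$, its orders of vanishing along the components in each $A_v$ — its kernel is the set of units extending to $U_\uA$ (i.e.\ with no zeros or poles on any $D_\alpha$, $\alpha\in A_v$), giving exactness at $E(U)$ after observing injectivity of $E(U_\uA)\hookrightarrow E(U)$ (again by $X$ projective integral); the image of $o$ is the set of tuples $(\sum_\alpha \ord_{D_\alpha}(f)D_\alpha)_v$ that are \emph{compatible}, and $p$ kills exactly these by definition of $\sdiv_\uA$, giving exactness at $\bigoplus_v\CH^0(D_{A_v})$; exactness at $\Pic(U;\uA)$ says a $\uA$-divisor class maps to $0$ in $\Pic(U)$ iff it is a combination of components in the $A_v$, which is the definition of $p$ being surjective onto $\ker\sigma_\uA$; and surjectivity of $\sigma_\uA$ follows from surjectivity of $\Pic(X)\to\Pic(U)$ in \eqref{eq:localization-seq} together with the factorization $\pi_\uA$, $\sigma_\uA$ of $i_U^*$.

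The main obstacle will be exactness at $E(U)$ and at $\bigoplus_{v\mid\infty}\CH^0(D_{A_v})$, where the \emph{compatibility} condition across the different archimedean places has to be handled: a tuple of divisors on the $D_{A_v}$ lifts to a principal $\uA$-divisor only when the induced classes on $U$ agree, and one must check this is exactly the image of $o$ rather than something larger. Concretely I expect to spend most of the effort verifying that if $(f_v)_v$ are units on $U$ with $\sum_\alpha\ord_{D_\alpha}(f_v)D_\alpha = \sum_\alpha\ord_{D_\alpha}(f_{v'})D_\alpha$ as abstract divisor data is \emph{not} required — rather that any single $f\in E(U)$ already produces a compatible tuple, and conversely a compatible tuple is realized by a single $f$ because the obstruction to patching lies in $\Pic(U)$ which is being quotiented out. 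Once the fibre-product description is in hand this is a diagram chase, but it is the step most likely to hide a subtlety, so I would write it out in full. The splitness hypothesis (Galois acts trivially, all components of $D$ defined over $K$) is used throughout to ensure $\CH^0(D_{A_v})=\ZZ^{A_v}$ with the evident basis and that the $\ord_{D_\alpha}$ are defined over $K$.
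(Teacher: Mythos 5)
For the first sequence~\eqref{eq:localization-seq}, your argument agrees with the paper's: the right-hand portion is the localization sequence for Chow groups, exactness at $\CH^0(D)$ is the observation that a divisor supported on $D$ is principal precisely when it is the divisor of a unit on $U$, and injectivity on the left uses $\cO_X(X)^\times=K^\times$.

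For the second sequence~\eqref{eq:sequence-pic-u-a} you take a genuinely different route: splicing localization sequences for intermediate open subsets via the fibre-product description of $\Pic(U;\uA)$. The paper instead argues directly from the quotient presentation $\Pic(U;\uA)=\Div(U;\uA)/\im(\sdiv_\uA)$, which is shorter and avoids any patching question. The kernel of $\sigma_\uA$ is generated by classes of $\uA$-divisors of the form $(0,(L_v)_v)$, which is exactly the image of $p$; such a class is trivial iff $(0,(L_v)_v)=\sdiv_\uA(f)$ for a rational $f$ with $\sdiv_U(f)=0$, that is, a unit on $U$, so $\ker p=\im o$; and $\ker o$ consists of the units on $U$ with no zero or pole on any $D_\alpha$ with $\alpha\in\bigcup_v A_v$, hence units on $U_\uA$. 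In this argument no compatibility condition across places ever has to be discussed, because a principal $\uA$-divisor is the $\sdiv_\uA$ of a \emph{single} global rational function $f$, not a tuple $(f_v)_v$.

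There is also a concrete error in your proposal: the term you put in the middle of the localization sequence for the pair $(U_\uA,U_{A_v})$ is wrong. Since $\Delta_\uA\subset\Delta_{A_v}$, the complement $U_\uA\setminus U_{A_v}$ has components the $D_\alpha$ with $\alpha\notin A_v$ but $\alpha\in A_{v'}$ for some other place $v'$, a set \emph{disjoint} from $A_v$; its Chow group is therefore not $\CH^0(D_{A_v})$. To make $\CH^0(D_{A_v})$ appear you would need the pair $(U_{A_v},U)$, whose complement is $D_{A_v}\cap U_{A_v}$. This confusion, together with the self-inflicted compatibility worry, suggests you should abandon the splicing plan and argue from the quotient presentation directly, as the paper does.
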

\begin{proof}
  The part on the right of~\eqref{eq:localization-seq} is the localization sequence for Chow groups. Exactness on the left follows from the observation that a relation making a divisor supported on $\supp(D)$ linearly trivial has to come from a meromorphic section whose only zeroes and poles are on $\supp(D)$, that is, an invertible regular function on $U$. The only such functions mapping to $0$ in $\CH^0(D)$ are regular and invertible on $X$, hence invertible constants, and we get $E(U)$ on the left.

  Turning to~\eqref{eq:sequence-pic-u-a}, the kernel of $\sigma_\uA$ is generated by $\uA$-divisors supported outside $U$, that is, on the $D_{A_v}$.
  If such an $\uA$-divisor $L=(0,(L_v)_v)$ is linearly equivalent to $0$ in $\Pic(U;\uA)$, this equivalence has to be induced by a section which has corresponding zeroes and poles on $L$, but no zeroes and poles on $U$; again, we can exclude constants. Finally, the invertible regular functions on $U$ not inducing such a relation, thus mapped to $0$ by $o$, are those which do not have a zero or pole on any $\uA$, that is, those that are regular and invertible on $U_\uA$.
\end{proof}

In the context of asymptotic formulas, we will be interested in the two numbers
\begin{align*}
  b_\uA  &= \rk\Pic(U)-\rk E(U) + \sum_{v\mid\infty}\# A_v \quad \text{and}\\
  b_\uA^\prime &= \rk\Pic(U;\uA)
\end{align*}
connected to the exponent of $\log B$ and a factor of the leading constant associated with $\Pic(U;\uA)$.

\begin{lemma}\label{lem:descriptions-b_A}
  For every $\uA\in\Canmax_\infty(D)$, we have
  \begin{align*}
    b_\uA        &= \rk\Pic X - \# \cA + \sum_{v\mid\infty}\# A_v \quad \text{and}\\
    b_\uA^\prime &= b_\uA + \rk E(U_\uA).
  \end{align*}
\end{lemma}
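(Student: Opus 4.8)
The plan is to obtain both identities by taking ranks in the two exact sequences furnished by Lemma~\ref{lem:sequences}. All the groups appearing there are finitely generated abelian groups (for $\Pic(X)$ this is part of our standing hypotheses, for the various $E(-)$ it was noted when the notation $E(V)$ was introduced, and the Chow and Picard groups of the remaining varieties are subquotients of these), so rank is additive along exact sequences and the alternating sum of the ranks of the terms of each sequence vanishes.

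For the first formula I would start from the localization sequence~\eqref{eq:localization-seq}. The one computation needed is $\rk\CH^0(D)=\#\cA$: the group $\CH^0(D)$ of top-dimensional cycles on $D$ is free abelian on the irreducible components of $D$, and by our splitness assumption (together with the strict normal crossings condition) these are exactly the geometrically integral divisors $D_\alpha$, $\alpha\in\cA$, all of dimension $\dim X-1$. Hence the vanishing of the alternating sum of ranks in~\eqref{eq:localization-seq} reads $\rk E(U)-\#\cA+\rk\Pic(X)-\rk\Pic(U)=0$, i.e. $\rk\Pic(U)-\rk E(U)=\rk\Pic(X)-\#\cA$. Adding $\sum_{v\mid\infty}\#A_v$ to both sides and recalling the definition of $b_\uA$ gives $b_\uA=\rk\Pic X-\#\cA+\sum_{v\mid\infty}\#A_v$.

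For the second formula I would run the identical argument on~\eqref{eq:sequence-pic-u-a}. Here, for each archimedean $v$, the divisor $D_{A_v}=\sum_{\alpha\in A_v}D_\alpha$ has the $D_\alpha$ with $\alpha\in A_v$ as its (geometrically integral) irreducible components, so $\CH^0(D_{A_v})\cong\ZZ^{A_v}$ and $\rk\CH^0(D_{A_v})=\#A_v$. The alternating sum of ranks of the terms of~\eqref{eq:sequence-pic-u-a} being zero then gives
\[
  \rk E(U_\uA)-\rk E(U)+\sum_{v\mid\infty}\#A_v-\rk\Pic(U;\uA)+\rk\Pic(U)=0 .
\]
Solving for $b_\uA'=\rk\Pic(U;\uA)$ and regrouping the remaining terms into $b_\uA=\rk\Pic(U)-\rk E(U)+\sum_{v\mid\infty}\#A_v$ yields $b_\uA'=b_\uA+\rk E(U_\uA)$.

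There is no genuine obstacle here: once the two exact sequences of Lemma~\ref{lem:sequences} are in place, the argument is pure bookkeeping with ranks of finitely generated abelian groups. The only point deserving an explicit word is the identification of $\rk\CH^0(D)$ with $\#\cA$ and of $\rk\CH^0(D_{A_v})$ with $\#A_v$, which is where the strict normal crossings and splitness hypotheses are used, to guarantee that the irreducible components in question are honest divisors defined over $K$ and that no Galois-theoretic multiplicities intervene.
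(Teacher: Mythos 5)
Your proof is correct and follows exactly the route the paper indicates: take alternating sums of ranks in the two exact sequences of Lemma~\ref{lem:sequences} and use $\rk\CH^0(D)=\#\cA$ (resp.\ $\rk\CH^0(D_{A_v})=\#A_v$). The paper's proof is just a one-line pointer to the same computation; you have simply spelled out the bookkeeping.
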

\begin{proof}
  These equalities follow directly from Lemma~\ref{lem:sequences}, on noting that 
  $\rk\CH^0(D)=\# \cA$.
\end{proof}

\begin{remark}
  An equality $b_\uA=b_\uA^\prime$ always holds for toric varieties~\cite[\S\,3.7.1 with the remark before Lem.~3.8.5]{arXiv:1006.3345} and partial equivariant compactifications of vector groups and semisimple groups (since their effective cones are simplicial and generated by invariant divisors, so there cannot be an element in $E(U_\uA)$ that would induce a relation between some of them). More generally, both numbers play a role in asymptotic formulas, and we shall see in Theorem~\ref{thm:obstruction-construction}~\eqref{enum:b_A-unequal} that they are equal whenever we can expect a tuple $\uA$ of maximal faces to contribute to an asymptotic formula.
\end{remark}

\begin{remark}
  As a consequence of the splitness assumption, it does not matter whether we work over $K$ or $\Kbar$: for a group $\Pic(U_\Kbar;\uA)$ similarly defined over $\Kbar$, there would be a canonical isomorphism $\Pic(U_\Kbar;\uA)\cong\Pic(U;\uA)$. Indeed, consider the exact sequences in Lemma~\ref{lem:sequences} over both $K$ and $\Kbar$ together with the obvious homomorphisms from the former to the latter.
  The splitness assumptions imply that the homomorphisms $\Pic(X)\to\Pic(X_\Kbar)$ and $\CH^0(D)\to \CH^0(D_\Kbar)$ are isomorphisms, so using the five lemma three times yields $\Pic(U_\Kbar;\uA)\cong\Pic(U;\uA)$.
\end{remark}

In~\cite{arXiv:1006.3345}, Chambert-Loir and Tschinkel define a group $\Pic(U;\uA)$ for (not necessarily split) toric varieties starting with torus-invariant divisors. Their construction coincides with the one above.

\begin{lemma}
  If $X$ is a split toric variety, $D$ is invariant under the action of the torus, and $\uA\in\Canmax_\infty(D)$, then 
  \begin{equation}\label{eq:toric-comparison}
    \Pic(U;\uA)
    \cong \left( \Pic^T(U) \oplus \bigoplus_{v\mid \infty} \ZZ^{A_v}\right)/\sdiv_\uA(M), 
  \end{equation}
  where $\Pic^T(U)$ is the group of torus invariant divisors and $M\subset \cK_X$ is the character group of the torus.
\end{lemma}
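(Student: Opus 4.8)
The plan is to realise the right-hand side of~\eqref{eq:toric-comparison} by restricting the presentation $\Pic(U;\uA)=\Div(U;\uA)/\im(\sdiv_\uA)$ from $\Div(U;\uA)=\Div(U)\oplus\bigoplus_{v\mid\infty}\ZZ^{A_v}$ to the subgroup
\[
  G=\Pic^T(U)\oplus\bigoplus_{v\mid\infty}\ZZ^{A_v},
\]
where $\Pic^T(U)\subseteq\Div(U)$ is the group of torus-invariant divisors. First I would record that, because $D$ is torus-invariant, each of its components is one of the boundary divisors $D_\rho$ of $X$; consequently $U$ contains the open torus $T$ as its dense orbit (so $U$ is again toric), and $\Pic^T(U)$ is free on the boundary divisors of $X$ that are not components of $D$. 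Since the principal divisor of a character is torus-invariant, $\sdiv_\uA(\chi^m)\in G$ for every $m\in M$; hence the composite
\[
  \phi\colon G\hookrightarrow\Div(U;\uA)\longrightarrow\Pic(U;\uA)
\]
is well defined and kills $\sdiv_\uA(M)$, and it remains to prove that $\phi$ is surjective with kernel exactly $\sdiv_\uA(M)$.

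For surjectivity I would invoke the standard fact that on a normal toric variety every Weil divisor class is represented by a torus-invariant divisor, i.e.\ the map $\Pic^T(U)\to\Cl(U)$ is onto; as $U$ is smooth, $\Cl(U)=\Pic(U)$. Given an $\uA$-divisor $(L_U,(L_v)_v)$, choose $D'\in\Pic^T(U)$ and a rational function $g\in\cK_X$ with $L_U=D'+\sdiv_U(g)$; then $(L_U,(L_v)_v)-\sdiv_\uA(g)$ has first component $D'$, and its remaining components still lie in $\bigoplus_v\ZZ^{A_v}$, so it lies in $G$ and represents the same class as $(L_U,(L_v)_v)$. Thus $\phi$ is surjective.

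For the kernel, $\sdiv_\uA(M)\subseteq\ker\phi$ is immediate. Conversely, suppose some $(D',(L_v)_v)\in G$ equals $\sdiv_\uA(f)$ for a rational function $f$; comparing first components gives $\sdiv_U(f)=D'\in\Pic^T(U)$. The crux---and the only place where splitness is used---is that a rational function whose divisor on $U$ is torus-invariant is a constant times a character: restricting $f$ to the dense split torus $T=\Spec K[M]\subseteq U$, its divisor becomes trivial because the boundary divisors of $X$ do not meet $T$, so $f|_T$ is a unit of the Laurent polynomial algebra $K[M]$, hence of the form $c\,\chi^m$ with $c\in K^\times$ and $m\in M$; density of $T$ then gives $f=c\,\chi^m$ in $\cK_X$. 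Therefore $(D',(L_v)_v)=\sdiv_\uA(f)=\sdiv_\uA(\chi^m)\in\sdiv_\uA(M)$, so $\ker\phi=\sdiv_\uA(M)$ and $\phi$ descends to the asserted isomorphism.

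The calculations are routine; the only point requiring care---the ``main obstacle''---is the consistency of this restriction, namely that passing to torus-invariant divisors and to characters neither discards divisor classes (controlled by the surjectivity of $\Pic^T(U)\to\Cl(U)$) nor creates new relations (controlled by the computation of the units of $K[M]$ over the split torus). The hypothesis that $D$ is torus-invariant enters precisely to ensure that $U$ is itself toric, so that both of these standard toric facts are available.
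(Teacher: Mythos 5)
Your proof is correct, and it takes a genuinely different route from the paper's. The paper's proof is a two-line application of Lemma~\ref{lem:sequences}: it observes that the group on the right-hand side of~\eqref{eq:toric-comparison} fits into the exact sequence~\eqref{eq:sequence-pic-u-a} in place of $\Pic(U;\uA)$ (using that $E(U),E(U_\uA)\subset M$ and that $\Pic^T(U)$ surjects onto $\Pic(U)$) and then invokes the five lemma. You instead unwind that comparison into a direct verification: you define the map $\phi$ on the subgroup $G=\Pic^T(U)\oplus\bigoplus_v\ZZ^{A_v}$, prove surjectivity from the fact that every divisor class on the toric variety $U$ has an invariant representative, and prove that $\ker\phi=\sdiv_\uA(M)$ by restricting to the dense split torus and computing the units of $K[M]$. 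The two approaches are trading the same toric inputs: what the paper phrases as $E(U),E(U_\uA)\subset M$ is precisely your unit computation on the split torus, and what it phrases as $\Pic^T(U)\twoheadrightarrow\Pic(U)$ is precisely your surjectivity step. The paper's version is shorter because it can lean on the already-established exact sequence; yours is self-contained and makes it explicit exactly where splitness and torus-invariance of $D$ enter, which is arguably clearer even if slightly longer.
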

\begin{proof}
  The group on the right-hand side of~\eqref{eq:toric-comparison} fits into the sequence~\eqref{eq:sequence-pic-u-a} in place of $\Pic(U;\uA)$ on noting that $E(U), E_{\uA}(U)\subset M$ and that $\Pic(U)$ is generated by $\Pic^T(U)$.
  Then the five lemma implies that the inclusion $\Pic^T(U)\to \Div(U)$ induces the desired isomorphism.
\end{proof}

\subsubsection*{Effective cones and the $\alpha$-constant}

If $V$ is a real vector space and $\Lambda\subset V$ a lattice, 
we can equip the dual space $V^\vee$ with a Haar measure such that $\Lambda^\vee$ has covolume $1$.
Recall that if
$C\subset V$ a convex cone,
the \emph{characteristic function} $\charfun_{C}$ of $C$ is defined to be
\[
  \charfun_{C}\colon V\to \RR_{\ge 0}\cup\{\infty\} \qquad x\mapsto
  \int_{C^\vee} e^{-\langle x,t\rangle}\diff t.
\]
It is finite in the interior of $C$.

\begin{definition}\label{def:effective}
  Let $\uA\in \Canmax_\infty(D)$.
  \begin{enumerate}
    \item Let $V_\uA=\Pic(U;\uA)_\RR$, and define the \emph{effective cone} associated with $\uA$ to be the cone $\Eff_\uA\subset\Pic(U;\uA)_\RR$ generated by the images of effective divisors
    $\Div_{\ge 0}(U)_\RR\oplus\bigoplus_v\RR_{\ge 0}^{A_v}$. 
    \item\label{enum:alpha} Assume that $\Pic(U;\uA)$ is torsion free; it is thus a lattice in $V_\uA$. Define
    \[
      \alpha_{\uA}=\frac{1}{(b_\uA^\prime - 1)!}\charfun_{\Eff_\uA}(\pi_\uA (\omega_X(D)^\vee)),
    \]
    where $\pi_\uA\colon\Pic(X)\to\Pic(U;\uA)$ is as in~\eqref{eq:pi_A}.
  \end{enumerate}
\end{definition}

\begin{remark}
  \leavevmode

  \begin{enumerate}
    \item If $K$ has only one archimedean place and $A\in \Canmax_\infty(D)$, then $\Eff_A$ is simply the effective cone $\overline{\Eff}_{U_A}$ of $U_A$.
  
    \item In Theorem~\ref{thm:obstruction-construction}~\eqref{enum:torsion}, we shall see that the assumption in Definition~\ref{def:effective}~\eqref{enum:alpha} holds whenever we can expect $\uA$ to contribute to an asymptotic formula.
    
    \item Since the log anticanonical bundle $\omega_X(D)^\vee$ is big, its image is in the interior of $\Eff_\uA$, making $\alpha_\uA$ finite.
    
    This value is nonzero if and only if $\Eff_\uA$ is strictly convex. In Theorem~\ref{thm:obstruction-construction}~\eqref{enum:nonconvex}, we shall see that if this is not the case, then there is an obstruction to the Zariski density of integral points ``near $\uA$'', and the face $\uA$ should not contribute to an asymptotic formula.
    
    \item The constant $\alpha_\uA$ can alternatively be described as a volume: Equip the hyperplanes $H_a=\{t\in V_\uA^\vee\mid\langle\cL,t \rangle=a\}$ with measures $\nu_{H_a}$ normalized such that
    \[
      \int_{V_\uA^\vee} f \diff\nu  = \int_\RR  \left(\int_{H_a} f \diff \nu_{H_a}\right) \diff a
    \]
    for all functions $f$ on $V_\uA^\vee$ with compact support.
    Then (cf.~\cite[Ch.~2,~\S\,2]{MR0158414} and~\cite[Prop.~5.3]{MR1620682})
    \begin{align*}
      \alpha_\uA
      & = \nu_{H_1}\{t\in \Eff_\uA^\vee \mid
               \langle \omega_X(D)^\vee, t\rangle = 1 \}\\
      & = b_\uA^\prime\,
          \nu\{t\in \Eff_\uA^\vee \mid
               \langle \omega_X(D)^\vee, t\rangle \le  1 \}.
    \end{align*}
    If the cone $\Eff_\uA$ is \emph{smooth}, that is, generated by a $\ZZ$-basis $r_1,\dots,r_{b_\uA^\prime}$ of $\Pic(U;\uA)$, this can be simplified further: If $\pi(\omega(D)^\vee)$ has the representation $(a_1,\dots,a_{b_\uA^\prime})$ in this basis, we have
    \[
      \alpha_\uA = \frac{1}{(b_{\uA}^\prime-1)!} \,
      \prod_{1\le i \le b_\uA^\prime} \frac{1}{a_i}.
    \]
    More generally, if $\Eff_\uA$ is generated by a $\ZZ$-basis of a sublattice $\Lambda\subset \Pic(U;\uA)$, the same is true after dividing the right-hand side by the index $[\Pic(U;\uA):\Lambda]$.
  \end{enumerate}
\end{remark}

\subsection{An obstruction}\label{ssec:obstruction}

Let $\uA\in\Canmax_\infty(D)$ be a maximal face of the archimedean analytic Clemens complex, and consider the regular functions $\cO_X(U_\uA)$ on $U_\uA$.
If $\cO_X(U_\uA)\ne K$, that is, if there are nonconstant functions on $U_\uA$, then there are no integral points that are simultaneously near all $Z_{A_v}$, except possibly on a finite set of strict subvarieties (Corollary~\ref{cor:obstruction}). If any such subvariety were to contribute to the asymptotic behavior, we would have to exclude it as accumulating; hence, there cannot be a contribution of ``points near $\uA$'' to an asymptotic formula if $\cO_X(U_\uA)\ne K$. In this case, we will say that there is \emph{an obstruction to the Zariski density of integral points near $\uA$}.

The most general statement of this obstruction deals with not necessarily maximal faces $\uA$. In this setting, the existence of such a function prevents the density of points ``near $\uA$'', except possibly near strictly larger faces $\uB$: 

\begin{proposition}\label{prop:general-obstruction}
  Let $\uA\in\Can_\infty(D)$ be a (not necessarily maximal) face such that $\cO_X(U_\uA)\ne K$.
  For each $v\mid\infty$, let $B_{v,1},\dots,B_{v,r_v}\in \Can_v(D)$ be the faces strictly containing $A_v$.
  For each $v\mid\infty$ and $1\le i\le r_v$, let $W_{v,i}$ be an arbitrary analytic neighborhood of $Z_{B_{v,i}}(K_v)$. 

  Then there exists an analytic neighborhood $U_v$ of 
  \[
    Z_{A_v}(K_v) \setminus \bigcup_{i=1}^{r_v} W_{v,i}
  \]
  for each $v\mid\infty$ and a Zariski dense open subvariety $V\subset X$ such that
  \[
    \{ \mU(\fo_K) \cap V(K) \mid x \in U_v \text{ for all } v\mid\infty\} = \emptyset.
  \]
\end{proposition}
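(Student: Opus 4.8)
The plan is to exploit a nonconstant $f\in\cO_X(U_\uA)$ by way of the product formula: being regular on $U_\uA$, $f$ spreads out to the integral model, and on a suitable neighborhood of $Z_{A_v}(K_v)\setminus\bigcup_iW_{v,i}$ its archimedean absolute values $\abs{f}_v$ are bounded; the product formula then allows only finitely many values for $f$ at an integral point lying in that neighborhood at every archimedean place, and the corresponding level sets of $f$ make up the complement of the open subvariety $V$. To begin, I would fix such an $f$ and spread it out: since $\mU$ is of finite type over $\fo_K$, flat base change gives $\cO(U)=\cO(\mU)\otimes_{\fo_K}K$, so there is an $N\in\fo_K\setminus\{0\}$ with $Nf\in\cO(\mU)$; in particular $Nf(x)\in\fo_K$ for every $x\in\mU(\fo_K)$.

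Next I would construct the neighborhoods $U_v$. The geometric heart of the matter is the inclusion
\[
  Z_{A_v}(K_v)\cap\Delta_\uA(K_v)\ \subseteq\ \bigcup_{i=1}^{r_v}Z_{B_{v,i}}(K_v)\ \subseteq\ \bigcup_{i=1}^{r_v}W_{v,i}.
\]
To check it, let $p\in Z_{A_v}(K_v)$ lie on a component $D_\gamma\subseteq\Delta_\uA$, so that $\gamma\notin A_w$ for all $w\mid\infty$, and in particular $\gamma\notin A_v$. If $A^\prime$ indexes all components of $D$ through $p$ (so $A^\prime\supseteq A_v\cup\{\gamma\}\supsetneq A_v$), then $\bigcap_{\alpha\in A^\prime}D_\alpha$ is smooth at $p$ by the strict normal crossings hypothesis, so a unique geometric component $Z^\prime$ passes through $p$; being the only component through the $\mathrm{Gal}(\overline{K_v}/K_v)$-fixed point $p$ (here the splitness hypothesis puts the $D_\alpha$, and hence their intersections, over $K\subseteq K_v$), it is defined over $K_v$ and carries a $K_v$-point, so $(A^\prime,Z^\prime)$ is a face of $\Can_v(D)$ strictly containing $A_v$, hence one of the $B_{v,i}$, and $p\in Z_{B_{v,i}}(K_v)\subseteq W_{v,i}$. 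It follows that
\[
  Y_v:=Z_{A_v}(K_v)\setminus\bigcup_{i=1}^{r_v}W_{v,i}
\]
is closed in $X(K_v)$, hence compact, and is contained in the open set $U_\uA(K_v)$, on which $f$ induces a continuous $K_v$-valued function. I would then choose $U_v$ to be a relatively compact open neighborhood of $Y_v$ with $\overline{U_v}\subseteq U_\uA(K_v)$, and put $C_v=\sup_{\overline{U_v}}\abs{f}_v<\infty$.

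To finish, suppose $x\in\mU(\fo_K)$ has image in $U_v$ for every $v\mid\infty$. Then $Nf(x)\in\fo_K$ and $\abs{Nf(x)}_v\le\abs{N}_vC_v$ at each archimedean place, so $Nf(x)$ lies in the intersection of the lattice $\fo_K\subset K\otimes_\QQ\RR$ with a bounded region, of which there are only finitely many elements; dividing by $N$ yields finitely many values $c_1,\dots,c_m\in K$ with $f(x)\in\{c_1,\dots,c_m\}$. For each $j$, the function $f-c_j$ is nonconstant and regular on $U_\uA$, so its zero locus is a proper closed subset of $U_\uA$ whose closure $Y_j$ is a proper closed subset of the irreducible variety $X$; setting $V=X\setminus\bigcup_{j=1}^mY_j$ gives a Zariski dense open subvariety, and $f(x)=c_j$ forces $x\in Y_j(K)$, so no such $x$ lies in $V(K)$.

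I expect the main obstacle to be the geometric bookkeeping surrounding the first displayed inclusion: one has to verify that staying clear of the neighborhoods $W_{v,i}$ of the strictly larger strata is precisely what keeps a point inside $U_\uA(K_v)$---equivalently, away from the polar locus $\Delta_\uA$ of $f$---and that the intersection points in question really lie on faces of the \emph{analytic} Clemens complex $\Can_v(D)$, which is where the strict normal crossings and splitness hypotheses are used. The remaining ingredients are the product formula and finiteness of lattice points in a bounded region.
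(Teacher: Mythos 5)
Your proof is correct and follows essentially the same strategy as the paper's: take a nonconstant $f\in\cO_X(U_\uA)$, rescale so that it is regular on the integral model, observe that its polar locus on $Z_{A_v}(K_v)$ is covered by the prescribed neighborhoods $W_{v,i}$ of the strictly larger strata, bound $\abs{f}_v$ on a compact neighborhood of what remains, and conclude via finiteness of lattice points in a bounded region that integral points in all the $U_v$ lie on finitely many level sets of $f$. Your write-up is in fact a bit more careful than the paper's in two places: you work with $\Delta_\uA$ (the actual polar locus) rather than the larger $\Delta_{A_v}$, and you spell out the SNC-plus-splitness argument showing that the intersection points really give faces of the analytic Clemens complex $\Can_v(D)$, which the paper leaves implicit.
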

\begin{proof}
  Let $s$ be a nonconstant regular function on $U_\uA$. After multiplying with a suitable constant, we can assume it is regular on the integral model. Let $v$ be an infinite place. 
  As the poles of $s$ are contained in $\Delta_{A_v}$, its only poles on $Z_{A_v}(K_v)$ are contained in $Z_{A_v} \cap \Delta_{A_v} = \bigcup_i Z_{B_{v,i}}$.  Hence, $\abs{s}_v$ is continuous on the compact set $Z = Z_{A_v}\setminus\bigcup_i W_{B_{v,i}}$ and attains its maximum $M_v$.
  It follows that
  \[
    U_v=\{x\in X(K_v)\mid \abs{s(x)}_v < 2 M_v\},
  \]
  is a neighborhood of $Z$.
  Since $s(x)\in\fo_K$ for integral points $x\in\mU(\fo_K)$, it can only attain the finitely many integral values $\alpha$ in the box in $\prod_{v\mid\infty} K_v$ defined by the $M_v$. Every integral point lying in all the $U_v$ must thus lie on one of the finitely many subvarieties $\overline{V(s-\alpha)}\subset X$.
\end{proof}

\begin{corollary}\label{cor:obstruction}
  Let $\uA\in\Canmax_\infty(D)$ be a maximal face such that $\cO_X(U_\uA)\ne K$. Then there is a nonempty Zariski open subset $V\subset X$ and an analytic neighborhood $U_v$ of $Z_{A_v}$ in $X(K_v)$ for every archimedean place $v$ such that
  \[
    \{x\in \mU(\fo_K) \cap V(K) \mid x\in U_v \text{ for all } v\mid\infty\} = \emptyset.
  \]
\end{corollary}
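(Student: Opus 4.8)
The plan is to obtain this as the special case of Proposition~\ref{prop:general-obstruction} in which the face $\uA$ is maximal. Recall from the remark preceding the definition of $\Canmax_\infty(D)$ that $\uA$ is maximal in $\Can_\infty(D)$ precisely when each component $A_v$ is maximal in $\Can_v(D)$. Hence, for every archimedean place $v$, there is no face of $\Can_v(D)$ strictly containing $A_v$; in the notation of the proposition this means $r_v=0$ for all $v\mid\infty$, so that the union $\bigcup_{i=1}^{r_v}W_{v,i}$ is empty and there is no choice of neighborhoods $W_{v,i}$ to be made.

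With this observation, I would simply invoke Proposition~\ref{prop:general-obstruction}, whose hypothesis $\cO_X(U_\uA)\ne K$ is exactly the assumption of the corollary. It produces, for each $v\mid\infty$, an analytic neighborhood $U_v$ of $Z_{A_v}(K_v)\setminus\emptyset=Z_{A_v}(K_v)$ in $X(K_v)$ — that is, exactly an analytic neighborhood of $Z_{A_v}$ in $X(K_v)$ — together with a Zariski dense open subvariety $V\subset X$ for which $\{x\in\mU(\fo_K)\cap V(K)\mid x\in U_v\text{ for all }v\mid\infty\}=\emptyset$. This is the assertion.

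If a more self-contained argument is preferred, one can re-derive the relevant part of the proof of the proposition directly in the maximal case: pick a nonconstant $s\in\cO_X(U_\uA)$, rescale it so that it is regular on the integral model $\mU$, and note that its polar divisor is contained in $\Delta_{A_v}$, which by maximality of $A_v$ does not meet $Z_{A_v}$; hence $\abs{s}_v$ is continuous, and therefore bounded by some $M_v$, on the compact set $Z_{A_v}(K_v)$, so that $U_v=\{x\in X(K_v)\mid\abs{s(x)}_v<2M_v\}$ is an open neighborhood of $Z_{A_v}(K_v)$. Any $x\in\mU(\fo_K)$ lying in all the $U_v$ then has $s(x)\in\fo_K$ bounded at every infinite place, so $s(x)$ takes one of finitely many values $\alpha$; such points lie on the finite union $\bigcup_\alpha\overline{V(s-\alpha)}$ of closed subvarieties, each proper since $s$ is nonconstant, and one takes $V$ to be the complement of this union.

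I do not expect any genuine obstacle here: the only points that need care are the empty-union bookkeeping and the identification of maximality of $\uA$ in the product complex with maximality of each factor $A_v$, and both are already recorded in the excerpt. Invoking the proposition is the cleanest route.
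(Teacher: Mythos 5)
Your proof is correct and is essentially identical to the paper's: both treat the corollary as the special case of Proposition~\ref{prop:general-obstruction} where, by maximality of each $A_v$, there are no strictly larger faces $B_{v,i}$, so the excised union is empty and $U_v$ is a neighborhood of all of $Z_{A_v}(K_v)$. The optional self-contained re-derivation you sketch is also a faithful specialization of the proposition's proof, but the short deduction is what the paper does.
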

\begin{proof}
  This is the special case of Proposition~\ref{prop:general-obstruction} where $\uA$ is maximal. Then for each $v\mid\infty$, there is no strictly larger $B_v\succ A_v$, and $U_v$ is simply a neighborhood of $Z_{A_v}(K_v)$.
\end{proof}

The obstruction can be rephrased using the notation~\eqref{eq:Z_uA}:

\begin{corollary}\label{cor:obstruction-adelic}
  Let $\uA\in\Can_\infty(D)$. For each strictly larger $\uB \succ \uA$, let $W_\uB$ be an open neighborhood of $Z_\uB$. Then there exists an open neighborhood $U$ of $Z_\uA \setminus \bigcup_{\uB\succ \uA} W_\uB$ and a nonempty Zariski open $V\subset X$ such that
  \[
    \mU(\fo_K) \cap V(K) \cap U = \emptyset,
  \]
  where the intersection is to be understood in $X(K_\RR)$.
  In particular, if $\uA$ is maximal, then
  \[
    \overline{\mU(\fo_K) \cap V(K)} \cap Z_{\uA} =\emptyset. 
  \]
\end{corollary}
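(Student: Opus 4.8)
The plan is to deduce this from Proposition~\ref{prop:general-obstruction} by repackaging the given neighborhoods $W_\uB$ into per-place data; as in that proposition, I work under the hypothesis $\cO_X(U_\uA)\ne K$ in force throughout this discussion (otherwise no obstruction is being asserted).

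First I would unwind the product structure of $\Can_\infty(D)=\prod_{v\mid\infty}\Can_v(D)$: a face $\uB=(B_v)_v$ is strictly larger than $\uA=(A_v)_v$ precisely when $B_v\succeq A_v$ for all $v$ with at least one strict inclusion, and then $Z_\uB=\prod_{v\mid\infty}Z_{B_v}(K_v)$. For a fixed archimedean place $v$ and a face $B\in\Can_v(D)$ strictly containing $A_v$, the tuple $\uB^{(v,B)}$ with $v$-th entry $B$ and $v'$-th entry $A_{v'}$ for $v'\ne v$ is strictly larger than $\uA$, so $W_{\uB^{(v,B)}}$ is among the given neighborhoods. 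Since $X$ is projective, each $X(K_{v'})$ is compact, hence so are $Z_B(K_v)$ and $\prod_{v'\ne v}Z_{A_{v'}}(K_{v'})$; the tube lemma then produces an open neighborhood $W_{v,B}$ of $Z_B(K_v)$ in $X(K_v)$ with $W_{v,B}\times\prod_{v'\ne v}Z_{A_{v'}}(K_{v'})\subset W_{\uB^{(v,B)}}$.

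Next I would feed the families $(W_{v,B})_{B\succ A_v}$ into Proposition~\ref{prop:general-obstruction}, obtaining for each $v\mid\infty$ an analytic neighborhood $U_v$ of $Z_{A_v}(K_v)\setminus\bigcup_{B\succ A_v}W_{v,B}$ together with a nonempty (equivalently dense, as $X$ is integral) Zariski open $V\subset X$ such that no point of $\mU(\fo_K)\cap V(K)$ has all its archimedean components in the $U_v$ at once. Setting $U=\prod_{v\mid\infty}U_v\subset X(K_\RR)$, this is exactly $\mU(\fo_K)\cap V(K)\cap U=\emptyset$. It then remains to check $Z_\uA\setminus\bigcup_{\uB\succ\uA}W_\uB\subset U$: given $x=(x_v)_v\in Z_\uA$ lying in no $W_\uB$ with $\uB\succ\uA$, suppose $x\notin U$; then $x_{v_0}\notin U_{v_0}$ for some $v_0$, and since $x_{v_0}\in Z_{A_{v_0}}(K_{v_0})$ while $U_{v_0}\supset Z_{A_{v_0}}(K_{v_0})\setminus\bigcup_B W_{v_0,B}$, this forces $x_{v_0}\in W_{v_0,B_0}$ for some $B_0\succ A_{v_0}$. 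As $x\in Z_\uA$ gives $x_{v'}\in Z_{A_{v'}}(K_{v'})$ for all $v'$, we obtain $x\in W_{v_0,B_0}\times\prod_{v'\ne v_0}Z_{A_{v'}}(K_{v'})\subset W_{\uB^{(v_0,B_0)}}$, contradicting the choice of $x$ since $\uB^{(v_0,B_0)}\succ\uA$.

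Finally, for the ``in particular'' clause: if $\uA$ is maximal there are no faces $\uB\succ\uA$, so the union is empty and $U$ is an open neighborhood of all of $Z_\uA$ in $X(K_\RR)$ disjoint from $\mU(\fo_K)\cap V(K)$ (alternatively, one may invoke Corollary~\ref{cor:obstruction} directly here); hence no point of $Z_\uA$ lies in the closure of that set, i.e.\ $\overline{\mU(\fo_K)\cap V(K)}\cap Z_\uA=\emptyset$. The argument is mostly bookkeeping on the product poset; the one genuinely topological step — and the place where I expect to have to be careful — is the tube-lemma passage from the product-type neighborhoods $W_\uB$ to the per-place neighborhoods required by Proposition~\ref{prop:general-obstruction}, together with verifying that every auxiliary tuple $\uB^{(v,B)}$ that arises is strictly larger than $\uA$ and hence actually carries one of the given neighborhoods.
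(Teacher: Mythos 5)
Your proof is correct and matches the route the paper intends: the corollary is presented without proof, as a ``rephrasing'' of Proposition~\ref{prop:general-obstruction}, and your reduction—choosing, for each $v$ and each $B\succ A_v$, the auxiliary tuple $\uB^{(v,B)}$, passing to per-place neighborhoods via the tube lemma, and taking $U=\prod_v U_v$—is exactly that rephrasing, with the compactness step spelled out that the paper leaves implicit. You also correctly observe that the hypothesis $\cO_X(U_\uA)\ne K$ is tacitly in force (the corollary inherits it from Proposition~\ref{prop:general-obstruction}, and the downstream Corollary~\ref{cor:subfaces} supplies it via $\cO_X(U_\uB)\ne K$ for any maximal $\uB\succeq\uA$ together with $U_\uA\subset U_\uB$).
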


\begin{remark}\label{rem:nonmaximal}
  If there is an obstruction to the Zariski density of integral points near a maximal face $\uA$ (in the sense of Corollary~\ref{cor:obstruction}), Proposition~\ref{prop:general-obstruction} implies that the density of points near $Z_{\uA^\prime}$ for subfaces $\uA^\prime$ is similarly obstructed---except possibly near a larger face $\uB \succ \uA^\prime$; indeed, any nonconstant regular section on $U_\uA$ is also regular on $U_{\uA^\prime}\subset U_\uA$.
  In such a case, we would expect that the number of such points is described by invariants attached to the maximal faces $\uB$ containing $\uA^\prime$. In other words, Corollary~\ref{cor:obstruction} provides no reason for which nonmaximal faces might have to be studied when counting integral points. 
\end{remark}

As a consequence, if all maximal faces $\uB$ containing a face $\uA$ are obstructed, there are no integral points simultaneously near all $Z_{A_v}(K_v)$:

\begin{corollary}\label{cor:subfaces}
  Let $\uA\in\Can_\infty(D)$ be a (not necessarily maximal) face of the archimedean analytic Clemens complex such that $\cO_X(U_\uB)\ne K$ for all maximal $\uB\succeq \uA$.
  Then there is a nonempty Zariski open subset $V\subset X$ and a neighborhood $U$ of $Z_\uA$ in $X(K_\RR)$ such that
  \[
    \{x\in \mU(\fo_K) \cap V(K) \mid x\in U_v \text{ for all } v\mid\infty\} = \emptyset.
  \]
\end{corollary}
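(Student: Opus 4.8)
The plan is to deduce the statement from Corollary~\ref{cor:obstruction-adelic} by induction on the (finite) number $N(\uA)=\#\{\uB\in\Can_\infty(D)\mid\uB\succ\uA\}$ of faces strictly larger than $\uA$. The base case $N(\uA)=0$ is the case of a maximal face: then $\cO_X(U_\uA)\ne K$ holds by hypothesis (take $\uB=\uA$), and Corollary~\ref{cor:obstruction}---equivalently Corollary~\ref{cor:obstruction-adelic} with an empty family of larger faces---produces the desired $V$ and neighborhood $U$ of $Z_\uA$.

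Before treating the inductive step I would record the elementary observation that the hypothesis propagates downward: for every $\uB\succeq\uA$ one has $\cO_X(U_\uB)\ne K$. Indeed, $\uB$ extends to some maximal face $\uB'\succeq\uB$, which then also satisfies $\uB'\succeq\uA$, so $\cO_X(U_{\uB'})\ne K$ by hypothesis; and since $\uB\preceq\uB'$ gives $U_\uB\subseteq U_{\uB'}$ (the support of $\Delta_{\uB'}$ is contained in that of $\Delta_\uB$), any nonconstant regular function on $U_{\uB'}$ restricts to a nonconstant rational function regular on the dense open subvariety $U_\uB$. In particular $\cO_X(U_\uA)\ne K$, every face $\uB$ with $\uB\succ\uA$ again satisfies the hypothesis of the corollary, and moreover $N(\uB)<N(\uA)$, since the faces strictly above $\uB$ form a proper subset of those strictly above $\uA$, missing $\uB$ itself.

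For the inductive step I would apply the inductive hypothesis to each of the finitely many $\uB\succ\uA$, obtaining a nonempty Zariski open $V_\uB\subseteq X$ and an open neighborhood $U^\uB$ of $Z_\uB$ in $X(K_\RR)$ with $\mU(\fo_K)\cap V_\uB(K)\cap U^\uB=\emptyset$. Feeding the neighborhoods $W_\uB:=U^\uB$ into Corollary~\ref{cor:obstruction-adelic} applied to $\uA$ (legitimate because $\cO_X(U_\uA)\ne K$) yields a nonempty Zariski open $V_\uA\subseteq X$ and an open neighborhood $U^\uA$ of $Z_\uA\setminus\bigcup_{\uB\succ\uA}U^\uB$ with $\mU(\fo_K)\cap V_\uA(K)\cap U^\uA=\emptyset$. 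Then $U:=U^\uA\cup\bigcup_{\uB\succ\uA}U^\uB$ is an open neighborhood of $Z_\uA$, the set $V:=V_\uA\cap\bigcap_{\uB\succ\uA}V_\uB$ is a nonempty Zariski open subset of the irreducible variety $X$, and any $x\in\mU(\fo_K)\cap V(K)$ lies in neither $U^\uA$ nor any $U^\uB$ (as $x$ lies in $V_\uA(K)$ and in each $V_\uB(K)$), hence not in $U$; this is the asserted emptiness, intersections being taken in $X(K_\RR)$. If the per-place formulation ``$x\in U_v$ for all $v\mid\infty$'' is wanted, one shrinks $U$ to a product neighborhood $\prod_{v\mid\infty}U_v\subseteq U$ of the compact set $Z_\uA=\prod_{v\mid\infty}Z_{A_v}(K_v)$ by the tube lemma.

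The argument is essentially bookkeeping once Corollary~\ref{cor:obstruction-adelic} is available; the point that needs care is the organization of the induction---non-maximal faces genuinely have to be handled separately, because $Z_{A_v}(K_v)$ meets the larger strata where a function obstructing $\uA$ may have poles---together with the verification (immediate, but essential) that the hypothesis ``$\cO_X(U_{\uB'})\ne K$ for all maximal $\uB'$ above'' is inherited by the intermediate faces $\uB\succ\uA$, so that the induction closes.
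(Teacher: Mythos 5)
Your proposal is correct and follows essentially the same route as the paper: an induction over the poset of faces above $\uA$ (you use the count $N(\uA)$, the paper uses codimension, but these are equivalent for the induction to descend), with the base case handled by Corollary~\ref{cor:obstruction} and the inductive step by Corollary~\ref{cor:obstruction-adelic}, and finally taking $U = U^\uA \cup \bigcup_\uB U^\uB$ and $V = V_\uA \cap \bigcap_\uB V_\uB$. Your explicit remark that the hypothesis propagates to every $\uB\succeq\uA$ (via $U_\uB\subseteq U_{\uB'}$ for a maximal $\uB'\succeq\uB$) is a useful detail that the paper leaves implicit but is indeed needed to invoke Corollary~\ref{cor:obstruction-adelic} at $\uA$ and to apply the inductive hypothesis at each $\uB$.
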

\begin{proof}
  This follows using induction on the codimension $c$ of $\uA$.
  The case $c=0$ is Corollary~\ref{cor:obstruction}.
  For general $\uA$, if $\uB \succ \uA$, then the induction hypothesis implies the existence of a neighborhood $U_\uB$ of $Z_\uB$ and a nonempty Zariski open $V_\uB$ such that $\mU(\fo_K)\cap V_\uB\cap U_\uB = \emptyset$. Corollary~\ref{cor:obstruction-adelic} guarantees the existence of a neighborhood $U_\uA$ of $Z_\uA \setminus \bigcup_{\uB_\succ \uA} U_\uB$ and a nonempty Zariski open $V_\uA$ such that $\mU(\fo_K)\cap V_\uA\cap U_\uA = \emptyset$.
  The statement follows with $U = U_\uA \cup \bigcup_{\uB\succ\uA} U_\uB$ and $V = V_\uA \cap \bigcap_{\uB\succ\uA} V_\uB$.
\end{proof}

A notable special case is the empty face $(\emptyset,\dots,\emptyset)$, with corresponding stratum $Z_{\emptyset}=X$:

\begin{corollary}\label{cor:density-obstruction}
  Assume that $\cO_X(U_\uA)\ne K$ for all maximal faces $\uA \in \Canmax_\infty(D)$. Then $\mU(\fo_K)$ is not Zariski dense for any integral model $\mU$ of $U$.
\end{corollary}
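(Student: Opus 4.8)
The plan is to apply Corollary~\ref{cor:subfaces} to the \emph{empty face} $\uA=(\emptyset,\dots,\emptyset)$ of the archimedean analytic Clemens complex. With our convention this face is included in $\Can_\infty(D)$, it is the unique minimal element of the poset $\Can_\infty(D)$, and its associated stratum is $Z_\uA=\prod_{v\mid\infty}X(K_v)=X(K_\RR)$. Before doing so I would dispose of the trivial case in which $X(K_v)=\emptyset$ for some archimedean place $v$: then $U(K)\subset X(K)=\emptyset$, so $\mU(\fo_K)=\emptyset$ and there is nothing to prove. Hence we may assume $X(K_v)\ne\emptyset$ for all $v\mid\infty$, which is exactly what is needed for $\uA$ to be a genuine face of $\Can_\infty(D)$.

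The key observation is then that, since $\uA$ is the minimum of $\Can_\infty(D)$, \emph{every} maximal face $\uB\in\Canmax_\infty(D)$ satisfies $\uB\succeq\uA$; consequently the hypothesis of Corollary~\ref{cor:subfaces} for this $\uA$ --- that $\cO_X(U_\uB)\ne K$ for every maximal $\uB\succeq\uA$ --- is exactly the hypothesis of the present corollary. Applying Corollary~\ref{cor:subfaces} yields a nonempty Zariski open $V\subset X$ together with a neighborhood $U$ of $Z_\uA$ in $X(K_\RR)$ such that no point of $\mU(\fo_K)$ whose image under the diagonal embedding $X(K)\hookrightarrow X(K_\RR)$ lies in $U$ also lies in $V(K)$. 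Since $Z_\uA=X(K_\RR)$, the only neighborhood of $Z_\uA$ in $X(K_\RR)$ is $X(K_\RR)$ itself, so this condition is vacuous; thus $\mU(\fo_K)\cap V(K)=\emptyset$, where $\mU(\fo_K)\subset U(K)\subset X(K)$.

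Finally I would conclude the density statement. The image of $\mU(\fo_K)$ in $X(K)$ is then contained in $(X\setminus V)(K)$, and $X\setminus V$ is a proper closed subvariety of $X$ because $V$ is nonempty; restricting to $U$ and using that $U\cap V$ is a nonempty open subset of the irreducible variety $U$, we see that $\mU(\fo_K)$ is contained in the proper closed subset $U\setminus V$ of $U$, hence is not Zariski dense (neither in $U$ nor in $X$). As Corollary~\ref{cor:subfaces} is stated for an arbitrary integral model, this holds for any $\mU$. I do not expect any real obstacle here: the statement is a direct specialization of the inductive Corollary~\ref{cor:subfaces}, the only points deserving a word being the trivial reduction to the case $X(K_\RR)\ne\emptyset$ and the remark that a neighborhood of the full space $Z_\uA=X(K_\RR)$ is necessarily everything.
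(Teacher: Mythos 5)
Your proof is correct and is exactly what the paper intends: the text immediately before the corollary announces it as "the notable special case" of Corollary~\ref{cor:subfaces} applied to the empty face, with $Z_\emptyset = X(K_\RR)$, and gives no further argument. You have simply filled in the routine details (the trivial reduction to $X(K_v)\ne\emptyset$, the observation that the only neighborhood of $Z_\uA = X(K_\RR)$ is the whole space, and the passage from $\mU(\fo_K)\cap V(K)=\emptyset$ to non-density), all of which are sound.
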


This obstruction is very similar to the notion of a \emph{weak obstruction at infinity} developed by Jahnel and Schindler~\cite[Def.~2.2]{MR3736498}. For an archimedean place $v$, the complement $U$ of a very ample divisor $D$ is called \emph{weakly obstructed at $v$}
if there is a connected component $U^\prime$ of $U(K_v)$,
a constant $c>0$, an integer $d>0$, and a finite set of rational functions of the form $s_i=f_i/1_D^d$ with $f_i\in H^0(X,\cO_X(D)^{\otimes d})$ not multiples of $1_D^d$ (that is, nonconstant regular functions $s_i$ on $U$)
such that, for every point $x\in U'$, there is at least one $s_i$ with $\abs{s_i}_v < c$.

\begin{lemma}\label{lem:face-obs-implies-obs-at-infinity}
  Let $v$ be an archimedean place of $K$ and assume that $U(K_v)$ is connected. If $\cO_X(U_A)\ne K$ for all maximal faces $A$ of the $K_v$-analytic Clemens complex, then $U$ is weakly obstructed at $v$.
\end{lemma}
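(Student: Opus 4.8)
The plan is to manufacture the rational functions required by the definition of weak obstruction out of the nonconstant functions $s_A\in\cO_X(U_A)$ attached to the maximal faces $A\in\Canmax_v(D)$, and then to bound them uniformly over $U(K_v)$ by a compactness argument on $X(K_v)$. First I would, for each maximal face $A$, choose a nonconstant $s_A\in\cO_X(U_A)$ --- this is exactly the hypothesis. Since $U\subset U_A$, each $s_A$ restricts to a nonconstant regular function on $U$ (a regular function on $U_A$ that were constant on the dense open $U$ would be constant on $U_A$). To match the shape demanded by the definition, I would observe that the polar divisor of $s_A$ is supported on $\Delta_A\subset D=\sum_\alpha D_\alpha$, hence bounded by $dD$ for a suitable integer $d>0$ common to the finitely many faces; then $\sdiv(s_A)+dD\ge 0$, so $f_A:=s_A\cdot 1_D^{\otimes d}$ lies in $H^0(X,\cO_X(D)^{\otimes d})$ and is not a multiple of $1_D^{\otimes d}$, and $s_A=f_A/1_D^{\otimes d}$. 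Taking $U^\prime=U(K_v)$, it then only remains to produce a constant $c>0$ such that $\min_A\abs{s_A(x)}_v<c$ for every $x\in U(K_v)$.

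The geometric core of the argument is the claim that the Zariski-open subsets $U_A=X\setminus\Delta_A$, $A\in\Canmax_v(D)$, cover $X$ on $K_v$-points: $X(K_v)=\bigcup_A U_A(K_v)$. As $U(K_v)\subset U_A(K_v)$ for every $A$, it is enough to show that each $y\in D(K_v)$ lies in some $U_A(K_v)$. I would set $B=\{\alpha\in\cA\mid y\in D_\alpha(K_v)\}$, which is nonempty, let $Z_0$ be the irreducible component of $Z_B$ through $y$ --- defined over $K_v$ by the same strict normal crossings argument as in the lemma in Section~\ref{sssec:Clemens-complexes} --- so that $(B,Z_0)\in\Can_v(D)$, and then extend $(B,Z_0)$ within the finite poset $\Can_v(D)$ to a maximal face $A\succeq(B,Z_0)$; in particular $A\supseteq B$. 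Then for $\gamma\notin A$ we have $\gamma\notin B$, i.e.\ $y\notin D_\gamma(K_v)$, so $y$ lies on none of the components of $\Delta_A=\sum_{\gamma\notin A}D_\gamma$; that is, $y\in U_A(K_v)$.

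Granting the covering, the constant $c$ comes from compactness. For $n\ge1$ and each maximal face $A$, the set $V_{A,n}=\{x\in U_A(K_v)\mid\abs{s_A(x)}_v<n\}$ is open in $X(K_v)$, and for fixed $A$ these exhaust $U_A(K_v)$ as $n\to\infty$; hence the open sets $W_n=\bigcup_A V_{A,n}$ increase to $\bigcup_A U_A(K_v)=X(K_v)$. Since $X(K_v)$ is compact, $W_n=X(K_v)$ for some $n$, which I take to be $c$. Then every $x\in U(K_v)$ lies in some $V_{A,c}$, i.e.\ $\abs{s_A(x)}_v<c$ for some maximal face $A$, which is precisely the bound needed. (The argument in fact bounds $\min_A\abs{s_A}_v$ on all of $U(K_v)$; the connectedness hypothesis is only used to name the component $U^\prime=U(K_v)$ appearing in the definition.)

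The one genuinely nontrivial step is the covering claim $X(K_v)=\bigcup_A U_A(K_v)$, which rests on the combinatorics of the analytic Clemens complex --- every face sits below a maximal one --- together with strict normal crossings and the splitness assumption, which guarantee that the stratum through a given $K_v$-point is unique and defined over $K_v$. Producing the $s_A$, rewriting them as $f_i/1_D^{\otimes d}$, and the compactness step are all routine.
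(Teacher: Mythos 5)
Your proof is correct and follows the same strategy as the paper's: take nonconstant $s_A\in\cO_X(U_A)$ for each maximal face $A$, observe that the $U_A(K_v)$ cover $X(K_v)$, and extract the constant $c$ by compactness of $X(K_v)$. You merely flesh out two steps the paper leaves implicit — justifying the covering claim via the poset structure of $\Can_v(D)$ and the SNC/splitness argument, and explicitly rewriting the $s_A$ in the form $f_A/1_D^{\otimes d}$ demanded by the definition — both of which are correct and welcome details.
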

\begin{proof}
  Take a nontrivial $s_A\in \cO_X(U_A)$ for all maximal faces $A$. For every point $x$ on the boundary, at least one of the $s_A$ is regular in $x$. Moreover, all of them are regular on $U$, whence $\{\abs{s_i}<c\}_{i,c}$ covers the compact set $X(K_v)$, and there is a finite subcover. We can then take $c$ as the maximal constant used in this subcover.
\end{proof}

\begin{remark}
  Over fields with only one infinite place, integral points are not Zariski dense if $U$ is weakly obstructed at $\infty$ by~\cite[Thm.~2.6]{MR3736498}. In a more general setting, this does not need to be the case, even if $U$ is obstructed at every archimedean place (Example~\ref{ex:gm}).
  However, the above Corollary~\ref{cor:density-obstruction} and the following, more general Theorem~\ref{thm:var-density-obstruction} generalize the obstruction to arbitrary number fields.
\end{remark}

This obstruction always vanishes after a suitable base change:

\begin{lemma}\label{lem:obstruction-vanishes}
  There is a finite extension $L\supset K$ such that there is a maximal face $\uA=(A_w)_w$ of the archimedean analytic Clemens complex $\Can_\infty(D_L)$ with $\cO_{X_L}((U_L)_\uA) = L$.
\end{lemma}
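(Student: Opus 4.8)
We are after a finite extension $L\supseteq K$ together with a maximal face $\uA=(A_w)_{w\mid\infty}$ of $\Can_\infty(D_L)$ for which $(U_L)_\uA$ carries only constant regular functions, i.e.\ $\cO_{X_L}((U_L)_\uA)=L$; by Corollary~\ref{cor:density-obstruction} this is precisely a witness that the obstruction to Zariski density has disappeared over $L$. The plan is to pick $L$ so that the archimedean analytic Clemens complex equals the geometric one at \emph{every} archimedean place of $L$ and so that $L$ has at least $n:=\#\cA$ archimedean places; one can then spread the vertices of $\uA$ over these places so that together the faces $A_w$ contain every irreducible component of $D$, forcing $\Delta_\uA=0$ by~\eqref{eq:def-Delta_uA-U_uA}, hence $(U_L)_\uA=X_L$ and $\cO_{X_L}((U_L)_\uA)=L$ since $X_L$ is projective and geometrically integral over $L$.

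First I would dispose of the case $D=0$ (take $L=K$ and the empty face), so assume $D\ne 0$. Since there are only finitely many strata $Z_A$ ($A\subseteq\cA$), each with finitely many geometric irreducible components, and each such nonempty component is defined over and has a point over some finite extension of $K$, I would pass to the compositum $L_0$ of all these extensions; splitness is inherited. For any finite $L\supseteq L_0$ and any archimedean place $w$ of $L$ one then has $\Can_w(D_L)=\cC_\Kbar(D)$ as partially ordered sets, because every geometric component of every $Z_A$ is defined over $L\subseteq L_w$ and contains an $L$-rational, hence an $L_w$-rational, point; in particular the facets of $\Can_w(D_L)$ are precisely the minimal strata of $D$. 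Moreover every component $D_\alpha$ lies in some minimal stratum $Z_{A(\alpha)}$ with $\alpha\in A(\alpha)$: descend from $(\{\alpha\},D_\alpha)$ by repeatedly passing, while possible, to an irreducible component of the intersection with a further component of $D$ meeting the current stratum (this terminates since the index set grows). Finally enlarge $L_0$ to a finite $L$ with at least $n$ archimedean places --- possible since, e.g., adjoining square roots of sufficiently many distinct primes produces arbitrarily many real places and the number of archimedean places never decreases under extension --- preserving the above.

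Now enumerate $\cA=\{\alpha_1,\dots,\alpha_n\}$ and the archimedean places of $L$ as $w_1,\dots,w_r$ with $r\ge n$, and set $A_{w_j}=A(\alpha_j)$ for $j\le n$ and $A_{w_j}=A(\alpha_1)$ for $j>n$. Each $A_{w_j}$ is a facet of $\Can_{w_j}(D_L)=\cC_\Kbar(D)$, so $\uA=(A_{w_j})_j\in\Canmax_\infty(D_L)$; and $\bigcup_j A_{w_j}\supseteq\{\alpha_1,\dots,\alpha_n\}=\cA$, so no irreducible component of $D$ is omitted from all the $A_{w_j}$. Hence $\Delta_\uA=0$, $(U_L)_\uA=X_L$, and $\cO_{X_L}((U_L)_\uA)=L$, as required. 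The one genuinely nontrivial point is the combinatorial observation in this last step: a single facet of the Clemens complex need not contain every component of $D$ --- so that over $\QQ$ or an imaginary quadratic field there may be no uniform such face --- but the facets at a large enough collection of archimedean places can jointly exhaust $\cA$. The remaining difficulty is the base-change bookkeeping of the previous paragraph, namely arranging with a single finite extension that all of the finitely many strata become split over every archimedean completion simultaneously, which is where the splitness hypothesis enters.
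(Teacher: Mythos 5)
Your proof is correct and follows essentially the same strategy as the paper's: spread maximal faces of the geometric Clemens complex over sufficiently many archimedean places of a suitable finite extension $L$ so that their union exhausts $\cA$, forcing $\Delta_\uA=0$, $(U_L)_\uA=X_L$ and hence $\cO_{X_L}((U_L)_\uA)=L$. The only (inessential) difference is that the paper obtains $\Can_w(D_L)=\cC_\Kbar(D)$ by simply requiring enough \emph{complex} places rather than via your compositum of splitting fields; note also that, like the paper's own proof, you establish $\cO_{X_L}((U_L)_\uA)=L$, so the ``$\ne L$'' in the printed statement is evidently a typo for ``$=L$'' (the obstruction \emph{vanishing}), which you read as intended.
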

\begin{proof}
  Let $A_1,\dots,A_n$ be the maximal faces of the geometric Clemens complex $\cC_\Kbar(D)$, and let $L\supset K$ be an extension with at least $n$ complex places $w_1,\dots,w_n$ (i.e., nonconjugate embeddings into $\CC$ whose image is not contained in $\RR$). Then $\Can_{L_{w_i}}(D_L)=\cC_\Kbar(D)$ for these places as all $Z_{A_i}(L_{w_i})$ are nonempty, and we can take the face $\uA=(A_w)_w$ with $A_{w_i}=A_i$ for these $n$ complex places and $A_w$ an arbitrary maximal face for all other places. Since every $D_i$ belongs to at least one maximal face of the geometric Clemens complex, we have $(U_L)_\uA=X_L$, hence $\cO_{X_L}((U_L)_\uA)=L$.
\end{proof}

\begin{figure}
  \begin{center}
    \includegraphics[scale = .8]{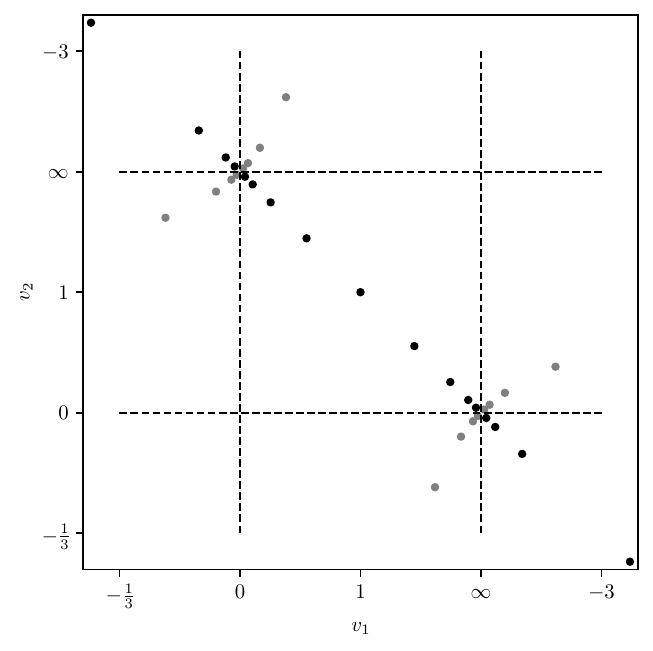}
    \caption{Integral points on $\multgrp$ over $K=\QQ(\sqrt{5})$---that is, units of the ring of integers---of small height. Those of norm $1$ are shown in black, those of norm $-1$ in grey. They are embedded into $\multgrp(\RR)\times \multgrp(\RR) = \RR^\times \times \RR^\times$ along its two places $v_1$ and $v_2$; a chart showing both $0$ and $\infty$ is used, and the complement of the multiplicative group is designated by dashed lines. The maximal faces $(0,0)$ and $(\infty,\infty)$ of $\Can_\infty(D)$ are obstructed, and no points are near them; the other two maximal faces $(0,\infty)$ and $(\infty, 0)$ are not, and integral points accumulate near them.
    }\label{fig:gm}
  \end{center}
\end{figure}

\begin{example}\label{ex:gm}
  Let $X=\PP^1 = \Proj K[t_0,t_1]$, $D=\{0,\infty\}$, and
  $U=\PP^1\setminus D = \multgrp$ with integral model $\mU = \mathbb{G}_{\mathrm{m},\fo_K}$. (Its log anticanonical bundle is trivial, violating the assumptions at the beginning of the section.) For every archimedean place $v$, the analytic Clemens complex consists of the two vertices $0$ and $\infty$, which are maximal faces.

  Assume for now that $K$ has only one infinite place; that is, $K$ is the field of rational numbers or imaginary quadratic. The two open subvarieties associated with the two maximal faces are $U_0 = \PP^1 \setminus \{\infty\}$
  and $U_\infty=\PP^1 \setminus \{0\}$. Considering $t=t_1/t_0 \in \cO_{\PP^1}(U_0)$
  and $t^{-1} \in \cO_{\PP^1}(U_\infty)$, both maximal faces are obstructed, and indeed, $\mathbb{G}_{\mathrm{m},\fo_K}(\fo_K) = \fo_K^\times$ is well known to be finite, whence not Zariski dense.

  If $K$ has more than one place, there are more maximal faces to consider.
  Choosing the maximal face $0$ at every place, we again have
  $U_{(0,\dots,0)} = \PP^1 \setminus \{\infty\}$, and $\uA=(0,\dots, 0)$ is obstructed by $t$. Analogously, $(\infty,\dots,\infty)$ is obstructed by $t^{-1}$.
  However, all remaining tuples $\uB$ of faces (e.g. $\uB=(0,\infty,\dots)$) satisfy $U_\uB=\PP^1$, whence $\cO_{\PP^1}(U_\uB) = K$, and those tuples are unobstructed. Indeed, in this case, the set $\mathbb{G}_{\mathrm{m},\fo_K}(\fo_K) = \fo_K^\times$
  is well known to be infinite, whence Zariski dense (and can be checked to be ``dense near'' all these $\uB$ as a consequence of Dirichlet's unit theorem). See Figure~\ref{fig:gm} for an example involving a real quadratic field.
\end{example}

Finally, Corollary~\ref{cor:density-obstruction} can be generalized and stated in a setting without assumptions on $U$, in a way and with a proof that is very similar to~\cite{MR3736498}:

\begin{theorem}\label{thm:var-density-obstruction}
  Let $U$ be a $K$-variety. Assume that 
  \begin{enumerate}[label = (\Alph*), start = 15]
    \item\label{enum:obstruction-assumption} there are nonconstant regular functions $s_1,\dots,s_n\in \cO_U(U)\setminus K$ and a constant $C>0$ such that, for every point
    $(x_v)_v\in \prod_{v\mid\infty} U(K_v)$, there is a $1\le i\le n$ with $\abs{s_i(x_v)}_v< C$ simultaneously for all $v\mid\infty$.
  \end{enumerate} Then $\mU(\fo_K)$ is not Zariski dense for any integral model $\mU$ of $U$.
\end{theorem}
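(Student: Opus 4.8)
The plan is to mimic the argument of Jahnel--Schindler in~\cite{MR3736498}, combining the hypothesis~\ref{enum:obstruction-assumption} with a product-of-local-heights bound and a geometry-of-numbers / pigeonhole step to force all integral points onto a proper closed subvariety; then one iterates or invokes Noetherian induction to conclude non-density. First I would spread the $s_i$ out: after multiplying each $s_i$ by a suitable nonzero element of $\fo_K$, I may assume that every $s_i$ is a regular function on the integral model $\mU$, hence $s_i(x)\in\fo_K$ for every $x\in\mU(\fo_K)$. The constant $C$ in~\ref{enum:obstruction-assumption} only changes by a bounded factor under this rescaling, so I still have: for each $(x_v)_v\in\prod_{v\mid\infty}U(K_v)$ there is an index $i=i((x_v)_v)$ with $\abs{s_i(x_v)}_v<C'$ for all $v\mid\infty$, for a new constant $C'>0$.

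Next I would bound the archimedean contributions from below at the finite places. For $x\in\mU(\fo_K)$ the element $\beta_i=s_i(x)\in\fo_K$ satisfies $\abs{\beta_i}_v\le 1$ at every finite place $v$, so by the product formula $\prod_{v\mid\infty}\abs{\beta_i}_v=\prod_{v<\infty}\abs{\beta_i}_v^{-1}\ge 1$ whenever $\beta_i\ne 0$. Combined with the hypothesis this traps $\beta_i$ in a bounded region of $K_\RR=\prod_{v\mid\infty}K_v$: each coordinate satisfies $\abs{\beta_i}_v<C'$, and the product of all coordinate absolute values is $\ge 1$. By the classical finiteness statement from the geometry of numbers (a bounded subset of $K_\RR$ contains only finitely many elements of the lattice $\fo_K$), there are only finitely many possible values $\beta\in\fo_K$ for each $s_i$ on $\mU(\fo_K)$; call the (finite) union of these $S\subset\fo_K$. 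Here I must also handle the case $\beta_i=0$ separately, but $s_i=0$ cuts out a proper closed subvariety $V(s_i)\subsetneq U$ since $s_i$ is nonconstant and $U$ is a variety (I may pass to an irreducible component if needed), so this case is already confined to a proper subvariety.

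It follows that every $x\in\mU(\fo_K)$ lies on one of the finitely many proper closed subvarieties $\overline{V(s_i-\beta)}\subset X$ with $1\le i\le n$ and $\beta\in S$ (including $\beta=0$): indeed, pick $i=i(x)$ as above; then $s_i(x)\in S$, so $x$ satisfies the equation $s_i-s_i(x)=0$. Hence $\mU(\fo_K)$ is contained in a proper closed subset $W=\bigcup_{i,\beta}\overline{V(s_i-\beta)}$ of $X$, so it is not Zariski dense in $U$. This already gives the statement. I expect the only genuinely delicate point to be the bookkeeping that makes the reduction "$s_i$ regular on $\mU$" compatible with the quantitative hypothesis~\ref{enum:obstruction-assumption}: one must check that scaling the $s_i$ and the accompanying change of $C$ does not destroy the property that \emph{one} index works \emph{simultaneously} at all infinite places for a given adelic point. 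This is routine because the scaling factor at each place is a fixed constant depending only on $s_i$, not on the point, so the "simultaneously for all $v\mid\infty$" clause survives with $C$ replaced by $C'=C\cdot\max_i\prod_{v\mid\infty}\abs{\lambda_i}_v$, where $\lambda_i\in\fo_K\setminus\{0\}$ is the chosen multiplier; everything else is the standard Jahnel--Schindler pigeonhole.
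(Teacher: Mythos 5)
Your proof is correct and follows essentially the same route as the paper: rescale the $s_i$ to make them regular on $\mU$, observe that only finitely many $\alpha\in\fo_K$ satisfy $\abs{\alpha}_v<C'$ at all archimedean places, and conclude that every integral point lies on one of the finitely many proper subvarieties $V(s_i-\alpha)$. The excursion through the product formula and the anticipated Noetherian induction are unnecessary (boundedness of the box $\{\abs{\cdot}_v<C'\ \forall v\mid\infty\}$ already gives finiteness, and a single application of the pigeonhole suffices), but they do no harm.
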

\begin{proof}
  After multiplying the $s_i$ and $C$ with a suitable constant, we can assume that they are regular on $\mU$. Let $\alpha_1,\dots,\alpha_s$ be the finitely many integers in $\fo_K$ such that $\abs{\alpha_j}_v < C$ for all $v\mid\infty$. Then each point $P\in\mU(\fo_K)$ lies on one of the finitely many subvarieties
  $V(s_i-\alpha_j)\subset U$.
\end{proof}

Returning to the setting with $(X,D)$ as in the beginning of this section (assumptions that are stronger than necessary for what follows), Lemma~\ref{lem:obstruction-vanishes} and the following Lemma~\ref{lem:O-implies-1-dim-obs} will imply that this obstruction, too, vanishes after a suitable finite base change.

\begin{lemma}\label{lem:O-implies-1-dim-obs}
  If~\ref{enum:obstruction-assumption} holds, then for all $\uA\in \Can_\infty(D)$ such that each $A_v$ is of dimension $0$---that is, $A_v =\{D_v\}$ for some component $D_v$ of $D$---the group $\cO_{X}(U_{\uA})$ is nontrivial.
\end{lemma}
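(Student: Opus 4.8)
The plan is to show that, under~\ref{enum:obstruction-assumption}, one of the functions $s_1,\dots,s_n$ is already regular on $U_\uA$. Write $A_v=\{\alpha_v\}$ for the vertex of $\Can_v(D)$ making up $A_v$, and set $S=\{\alpha_v:v\mid\infty\}\subset\cA$, so that $\Delta_\uA=\sum_{\alpha\in\cA\setminus S}D_\alpha$. Since $U_\uA=X\setminus\Delta_\uA$ is open in the smooth projective integral variety $X$, a rational function $f$ lies in $\cO_X(U_\uA)$ precisely when its poles are supported on $\Delta_\uA$, i.e.\ when it is regular on $U$ and $\ord_{D_{\alpha_v}}(f)\ge 0$ for every $v\mid\infty$. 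As each $s_i$ is already regular on $U$ and nonconstant, it therefore suffices to produce an index $i_0$ with $\ord_{D_{\alpha_v}}(s_{i_0})\ge 0$ for all $v\mid\infty$; then $s_{i_0}\in\cO_X(U_\uA)\setminus K$, so $\cO_X(U_\uA)\ne K$.

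To find such an $i_0$, I would approach the divisors $D_{\alpha_v}$ at the archimedean places and apply~\ref{enum:obstruction-assumption} together with a pigeonhole argument. Fix $v\mid\infty$. Since $A_v\in\Can_v(D)$, the manifold $D_{\alpha_v}(K_v)$ is nonempty. Because $D_{\alpha_v}$ is irreducible and $D$ has strict normal crossings, its intersections with the other components of $D$ form a proper closed subset; moreover, for each $i$ with $m_i:=-\ord_{D_{\alpha_v}}(s_i)>0$, the function $s_i\,t^{m_i}$ (with $t$ a local equation of $D_{\alpha_v}$) has order $0$ along $D_{\alpha_v}$, so both it and its inverse are regular at the generic point of $D_{\alpha_v}$, hence on an open subset of $X$ whose complement meets $D_{\alpha_v}$ in a proper closed subset. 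As the $K_v$-points of a proper closed subvariety are nowhere dense in the nonempty manifold $D_{\alpha_v}(K_v)$, I can choose $P_v\in D_{\alpha_v}(K_v)$ lying on no other component of $D$ and such that, for each $i$ with $m_i>0$, the function $s_i\,t^{m_i}$ is regular and nonvanishing at $P_v$. I also pick a sequence $x^{(k)}_v\in U(K_v)$ with $x^{(k)}_v\to P_v$; this exists because $P_v$ is a smooth point of $D$ lying on the single component $D_{\alpha_v}$, and so lies in the closure of $U(K_v)$ in $X(K_v)$. Feeding each tuple $(x^{(k)}_v)_{v\mid\infty}$ into~\ref{enum:obstruction-assumption} yields an index $i(k)$ with $\abs{s_{i(k)}(x^{(k)}_v)}_v<C$ for all $v\mid\infty$; by the pigeonhole principle some value $i_0$ occurs for infinitely many $k$, and along that subsequence $\abs{s_{i_0}(x^{(k)}_v)}_v<C$ for all $v\mid\infty$.

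It remains to verify that this $i_0$ works. Suppose $\ord_{D_{\alpha_v}}(s_{i_0})<0$ for some $v\mid\infty$, and write $s_{i_0}=u\,t^{\ord_{D_{\alpha_v}}(s_{i_0})}$ near $P_v$, where $u=s_{i_0}t^{m_{i_0}}$ is regular and nonvanishing at $P_v$ by our choice. Since $\abs{t(x^{(k)}_v)}_v\to 0$ while $\abs{u(x^{(k)}_v)}_v\to\abs{u(P_v)}_v\ne 0$, we get $\abs{s_{i_0}(x^{(k)}_v)}_v\to\infty$, contradicting the bound just obtained. Hence $\ord_{D_{\alpha_v}}(s_{i_0})\ge 0$ for all $v\mid\infty$, so by the first paragraph $s_{i_0}$ is a nonconstant element of $\cO_X(U_\uA)$, and this group is nontrivial.

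I expect the one delicate point to be the genericity of the points $P_v$. It is there precisely to exclude the pathology that $s_i$ has a pole along $D_{\alpha_v}$ as a divisor yet remains bounded along some sequence of $U(K_v)$-points approaching $P_v$, because its leading coefficient degenerates at $P_v$; arranging that $s_i\,t^{m_i}$ is a unit at $P_v$ rules this out. This is possible because $D_{\alpha_v}(K_v)$ is a nonempty real or complex manifold on which lower-dimensional subvarieties are nowhere dense — and here the hypothesis that each $A_v$ is a vertex enters, ensuring $D_{\alpha_v}(K_v)\ne\emptyset$. The reduction in the first paragraph, the pigeonhole extraction, and the final contradiction are otherwise formal.
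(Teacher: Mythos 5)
Your proof is correct and takes essentially the same approach as the paper's: both use the boundedness hypothesis near a generic point $P_v$ of each $D_{\alpha_v}(K_v)$ to produce an $s_{i_0}$ with no pole along any $D_{\alpha_v}$, hence $s_{i_0}\in\cO_X(U_\uA)\setminus K$; the paper argues by closing up the sets $U_i=\{\abs{s_i(x_v)}_v<C\text{ for all }v\}$ and noting that the closures $F_i$ must cover $X(K_\RR)$ yet each misses a generic point of $Z_\uA$, while you argue with a sequence converging to $(P_v)_v$ and the pigeonhole principle — two packagings of the same compactness argument. One remark in your favor: your explicit requirement that $P_v$ avoid the vanishing loci of the leading coefficients $u_i=s_it^{m_i}$ (and not merely the other boundary components) is exactly the genericity needed for the pole of $s_i$ to dominate at $P_v$; the paper's wording at the corresponding step is looser, since $\overline{V(s_i)}$ can meet $D_{\alpha_v}$ away from the other components of $D$, and the correct statement is that the finitely many sets $F_{i,v}\cap D_{\alpha_v}(K_v)$ all lie in a proper closed subset of $D_{\alpha_v}(K_v)$ — which is precisely what your choice of $P_v$ makes explicit.
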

\begin{proof}
  For each $s_i$, consider the set $U_i = \{\abs{s_i(x_v)}_v < C \text{ for all $v$}\} \subset X(K_\RR)$. By assumption, these sets cover $U(K_\RR)$, and as this cover is finite, their closures $F_i$ cover $X(K_\RR)$.
  Let $\uA\in\Can_\infty(D)$ be such that each $A_v$ is of dimension $0$. For each $i$, write the divisor associated with $s_i$ as a rational function on $X$ as a difference $\sdiv s_i = M_i-N_i$ of effective divisors (without shared components).
  For each $v\mid\infty$, write
  \[
    D_v'=D_v(K_v)\setminus \bigcup_{D_v\not\subset M_i} M_i(K_v),
  \]
  which is nonempty by the smoothness of $D_v$.

  If an $s_i$ is not in $\cO_{X}(U_{\uA})$, that is, if it has a pole along at least one of the $D_v$, then for that $v\mid\infty$, the set
  $U_{i,v} = \{\abs{s_i(x_v)}_v < C\}$ is disjoint from $D_v(K_v)$ in $X(K_v)$. Its closure $F_{i,v}$ can only meet $D_v(K_v)\subset N_i(K_v)$ along the base locus of the pencil spanned by $M_i$ and $N_i$. (Indeed, blowing up the base locous yields a variety $\pi\colon X'\to X$ with a morphism $\sigma\colon X'\to\PP^1$ whose fibers are the strict transforms of the divisors in the pencil; then $F_{i,v}$ is $\pi(\sigma^{-1}(B))$ for the closed ball $B$ of radius $C$ around $0$ by properness of $\pi$, while $D_v(K_v)$ is contained in $N_i=\pi(\sigma^{-1}(\infty))$.)
  It follows that $F_{i,v}$ is disjoint from $D_v'$, so that $F_i$ is disjoint from $\prod_v D_v'$.

  If none of the $s_i$ were in $\cO_{X}(U_{\uA})$, then $\bigcup_i F_i = X(K_\RR)$ would be disjoint from the nonempty subset $\prod_v D_v'$, a contradiction.
\end{proof}

\begin{lemma}
  There is a finite field extension $L/K$ such that~\ref{enum:obstruction-assumption} does not hold for $U_L$.
\end{lemma}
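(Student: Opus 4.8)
The plan is to deduce the lemma from Lemma~\ref{lem:O-implies-1-dim-obs} by means of a base change of the kind carried out in the proof of Lemma~\ref{lem:obstruction-vanishes}. Concretely, I would produce a finite extension $L/K$ together with a face $\uA=(A_w)_{w\mid\infty}\in\Can_\infty(D_L)$ such that every $A_w$ has dimension~$0$ and $\Delta_\uA=0$. Then $U_\uA=X_L$, so $\cO_{X_L}((U_L)_\uA)=\cO_{X_L}(X_L)=L$ because $X_L$ is proper and geometrically integral; since all the standing hypotheses on $(X,D)$ are inherited by $(X_L,D_L)$ under the base change $K\to L$, applying Lemma~\ref{lem:O-implies-1-dim-obs} over $L$ then shows that~\ref{enum:obstruction-assumption} cannot hold for $U_L$, which is the claim.

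To build such an $L$, I would take any finite extension $L/K$ that is totally imaginary and has at least $\#\cA$ infinite places; such fields are plentiful---for instance the compositum of $K(\sqrt{-1})$ with a totally imaginary field of sufficiently large degree over $\QQ$, a totally imaginary number field having exactly $[L:\QQ]/2$ infinite places. At every infinite place $w$ of $L$ one has $L_w=\CC$, so $\Can_w(D_L)=\cC_\Kbar(D)$; in particular, since each $D_\alpha$ is geometrically integral by splitness, the singleton $\{D_\alpha\}$ is a vertex of $\Can_w(D_L)$ for every $\alpha\in\cA$ and every $w\mid\infty$. Now list $\cA=\{\alpha_1,\dots,\alpha_m\}$, pick distinct infinite places $w_1,\dots,w_m$ of $L$, and set $A_{w_j}=\{D_{\alpha_j}\}$ for $1\le j\le m$ and $A_w=\{D_{\alpha_1}\}$ at every remaining infinite place. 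Each $A_w$ has dimension~$0$, and every component of $D$ occurs among the $A_{w_j}$, so that
\[
  \Delta_\uA=\sum_{\substack{\alpha\notin A_w\\\text{for all }w\mid\infty}}D_\alpha=0,
\]
as required.

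The only genuinely delicate point is the behaviour at real places: over a field with a real place $w_0$ at which no component $D_\alpha$ has a real point, $\Can_{w_0}(D_L)$ contains no vertex, there is no face $\uA$ with all $A_w$ of dimension~$0$, and Lemma~\ref{lem:O-implies-1-dim-obs} becomes vacuous over $L$; passing to a totally imaginary $L$ sidesteps this and is always possible. The remaining verifications---that $\Can_w(D_L)=\cC_\Kbar(D)$ at complex places, that $\Delta_\uA=0$ forces $\cO_{X_L}(U_\uA)=L$, and that smoothness, properness, geometric integrality, the strict normal crossings condition, splitness, and hypotheses~(1)--(4) all survive the base change $K\to L$---are routine (and, as noted after Lemma~\ref{lem:O-implies-1-dim-obs}, some of these are in any case stronger than needed).
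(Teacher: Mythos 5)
Your argument is correct and follows essentially the same route as the paper's: base-change to a totally imaginary $L$ with enough complex places so that $\Can_w(D_L)=\cC_\Kbar(D)$ at every $w\mid\infty$, build a face $\uA\in\Can_\infty(D_L)$ with every $A_w$ zero-dimensional and $\Delta_\uA=0$ (hence $U_\uA=X_L$ and $\cO_{X_L}((U_L)_\uA)=L$), then invoke the contrapositive of Lemma~\ref{lem:O-implies-1-dim-obs}. The one small omission is the degenerate case $D=0$: there $\cA=\emptyset$, your enumeration is vacuous and Lemma~\ref{lem:O-implies-1-dim-obs} has no content, but then $U=X$ is proper, $\cO_U(U)=K$, and~\ref{enum:obstruction-assumption} already fails trivially (the paper treats this case explicitly before turning to $D\ne 0$).
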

\begin{proof}
  By Lemma~\ref{lem:obstruction-vanishes}, there is a field extension such that $\cO_{X_L}((U_L)_{\uB}) = L$ for at least one maximal face $\uB\in\Can_\infty(D_L)$. 
  If $D=0$, then $U=X$ is proper and $\cO_X(X)=K$, so~\ref{enum:obstruction-assumption} cannot hold. Otherwise, possibly enlarging $L$ to a totally imaginary field to make all $K_v$-analytic Clemens complexes coincide with the geometric one, $\uB$ contains a subface $\uA$ such that all $A_v$ have dimension $0$; as $U_\uA\supset U_{\uB}$, group of regular functions $\cO_{X_L}((U_L)_\uA)\subset \cO_{X_L}((U_L)_\uB)$ is still trivial.
  Now the statement follows from Lemma~\ref{lem:O-implies-1-dim-obs}.
\end{proof}

\subsection{Relating the obstruction to the constants \texorpdfstring{$\alpha_\uA$}{alpha A}}\label{ssec:relation-obstruction-constant}

This obstruction can be triggered if some of the objects defined in the previous section behave pathologically.

\begin{theorem}\label{thm:obstruction-construction}
  Let $\uA\in\Canmax_\infty(D)$ be a maximal face such that one of the following holds:
  \begin{enumerate}
    \item\label{enum:nonconvex} the effective cone $\Eff_\uA$ is not strictly convex (that is, $\alpha_\uA = 0$),
    \item\label{enum:b_A-unequal} $b_\uA \neq b^\prime_\uA$, or
    \item\label{enum:torsion} the group $\Pic(U;\uA)$ of $\uA$-divisor classes is not torsion free.
  \end{enumerate}
  Then $\cO_X(U_\uA)\ne K$, and there is an obstruction to the Zariski density of points near $\uA$.
  Moreover, if $K$ has only one infinite place and~(\ref{enum:b_A-unequal}) holds, then $\mU(\fo_K)$ is not Zariski dense for any integral model $\mU$ of $U$.
\end{theorem}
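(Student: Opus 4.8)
The plan is to reduce, in each of the three cases, to producing a nonconstant function $f\in\cO_X(U_\uA)$; the existence of an obstruction to the Zariski density of points near $\uA$ then follows at once from Corollary~\ref{cor:obstruction}. Case~(ii) is essentially just Lemma~\ref{lem:descriptions-b_A}: since $b'_\uA=b_\uA+\rk E(U_\uA)$, the inequality $b_\uA\ne b'_\uA$ forces $\rk E(U_\uA)\ge 1$, so $E(U_\uA)=\cO_X(U_\uA)^\times/K^\times$ is nonzero — it is torsion free, because a regular function whose power lies in $K^\times$ is algebraic over $K$ and hence constant by geometric integrality — and any representative of a nonzero class is a nonconstant unit on $U_\uA$. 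This also settles the supplementary claim: if $K$ has a single infinite place, then for such a unit $f$ both $f$ and $f^{-1}$ become regular on $\mU$ after multiplying by suitable constants $c_1,c_2\in K^\times$, so $g=c_1f$ and $h=c_2f^{-1}$ lie in $\Gamma(\mU,\cO)$ with $gh=c_1c_2$; hence for every $P\in\mU(\fo_K)$ the element $g(P)\in\fo_K$ divides the fixed element $c_1c_2$, and since $\fo_K^\times$ is finite (Dirichlet) $g(P)$, and therefore $f(P)$, takes only finitely many values, so $\mU(\fo_K)$ lies on finitely many proper subvarieties $V(f-\alpha)$ and is not Zariski dense.

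In case~(i) the function is written down explicitly. Non-strict-convexity of $\Eff_\uA$ means that some nonzero $x\in\Pic(U;\uA)_\RR$ satisfies $x,-x\in\Eff_\uA$; as $\Eff_\uA$ is the rational polyhedral cone (finitely generated by assumption~(3)) spanned by classes of effective $\uA$-divisors, clearing denominators produces effective $\uA$-divisors $L,M$, not both zero, with $[L]+[M]=0$ in $\Pic(U;\uA)$, hence $L+M=\sdiv_\uA(f)$ for some $f\in\cK_X^\times$. Comparing components, $\sdiv_U(f)\ge 0$ and $\ord_{D_\alpha}(f)\ge 0$ for every $\alpha$ belonging to some $A_v$; thus the polar divisor of $f$ on $X$ is supported on the components of $\Delta_\uA$, so $f\in\cO_X(U_\uA)$, and $f$ is nonconstant because $L+M\ne 0$. (This $f$ need not be invertible on $U_\uA$, which is why~(i) is genuinely weaker than~(ii).)

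Case~(iii) is the technical heart. A nonzero torsion class yields a $\uA$-divisor $L$ and $f\in\cK_X^\times$ with $nL=\sdiv_\uA(f)$, $n\ge 2$, $[L]\ne 0$; write $\sdiv_X(f)=n\widetilde L+\Delta^\sharp$, where $n\widetilde L$ is the part recorded by $\sdiv_\uA$ and $\Delta^\sharp$ is supported on the components of $\Delta_\uA$, so that $[\Delta^\sharp]=-n[\widetilde L]\in n\Pic(X)$ and $\Delta^\sharp\ne 0$. If $\tfrac1n[\Delta^\sharp]$ is represented by a divisor supported on $\Delta_\uA$, subtracting it from $\Delta^\sharp$ gives a principal divisor supported on $\Delta_\uA$ — i.e.\ a unit on $U_\uA$ — which must be nonconstant, since otherwise $\sdiv_X(f)$ would be $n$ times a principal divisor, forcing $f=c\,h^{n}$ and hence $[L]=0$ (here the torsion-freeness of $\Pic(X)$ is used). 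Otherwise $\langle[D_\alpha]:D_\alpha\subset\Delta_\uA\rangle$ is not saturated in $\Pic(X)$, so $\Pic(U_\uA)$ has torsion and there is a primitive class $\ell\notin\langle[D_\alpha]\rangle$ in the cone spanned by those boundary classes with some effective multiple $m\ell=[E]$, $E$ supported on $\Delta_\uA$; one then produces a nonconstant regular function on $U_\uA$ as a ratio $s'/s$ of two independent global sections of $\cO_X(kE)$ for a suitable $k\ge 1$, with $s$ the section whose divisor is $kE$. The step I expect to be the main obstacle is excluding the last pathology — that $\cO_X(kE)$ might have a one-dimensional space of sections for every $k$, i.e.\ that a rigid effective divisor supported on the boundary could have a non-trivially divisible class in $\Pic(X)$ — which should be ruled out using the bigness of $\omega_X^\vee$ (assumption~(4)).
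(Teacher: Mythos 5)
Your treatments of cases~(\ref{enum:nonconvex}) and~(\ref{enum:b_A-unequal}), and of the supplementary claim when $K$ has a single infinite place, are essentially identical to the paper's and are correct. Case~(\ref{enum:torsion}), however, has a genuine gap, and it is exactly the one you flag.

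In the paper's argument for~(\ref{enum:torsion}), the class one represents effectively is \emph{not required to be supported on $\Delta_\uA$}, and that is the whole point. The paper lifts the torsion class $T$ to some $\widetilde T\in\Pic(X)$ with $n\widetilde T + \sum_{\alpha\in\supp\Delta_\uA} b_\alpha D_\alpha \sim 0$, then replaces $\widetilde T$ by $\widetilde T' = \widetilde T + \sum \lceil b_\alpha/n\rceil D_\alpha$ (a ceiling trick that still maps to $T$ under $\pi_\uA$ and makes $n\widetilde T'$ linearly equivalent to an \emph{effective} boundary divisor, hence $\widetilde T'$ pseudoeffective). Because the pseudoeffective cone is a rational polyhedral cone spanned by effective classes --- that is assumption~(3), not the bigness of $\omega_X^\vee$ --- $\widetilde T'$ has an effective $\QQ$-representative $E$, whose support is \emph{a priori arbitrary}. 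Since $\pi_\uA([E])=T\ne 0$, the divisor $E$ cannot be supported on $\Delta_\uA$; since $\pi_\uA([nE])=nT=0$, a suitable integral multiple of $nE$ is linearly equivalent to a divisor on $\Delta_\uA$, and the corresponding rational function is regular on $U_\uA$ and nonconstant. Nothing about sections of a line bundle attached to a rigid boundary divisor needs to be controlled.

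Your case~(b) instead insists on representing some primitive class $\ell$ by an \emph{effective boundary-supported} divisor $E$ and then hopes $h^0(\cO_X(kE))\ge 2$ for some $k$. (Your first assertion in~(b), that a primitive $\ell$ in the saturation of $\langle[D_\alpha]\rangle$ but not in $\langle[D_\alpha]\rangle$ can be found in the cone spanned by the $[D_\alpha]$, is in fact correct --- reduce modulo $\Lambda$ by subtracting $\sum\lfloor q_\alpha\rfloor[D_\alpha]$ --- but you do not argue it.) The remaining step you mark as "the main obstacle" is precisely where your route diverges from the paper's and where it is not clear how to proceed: rigidity of boundary divisors is not ruled out by bigness of $\omega_X^\vee$, and your heuristic points at assumption~(4) when the relevant assumption is~(3). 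The paper sidesteps the whole difficulty by never asking the effective representative $E$ to live on the boundary. Also note that your case~(a) produces an invertible regular function on $U_\uA$, hence actually reduces to case~(\ref{enum:b_A-unequal}); the content of case~(\ref{enum:torsion}) is precisely what happens when that reduction fails, which is the part your argument does not close.
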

\begin{proof}
  Case~\eqref{enum:nonconvex}.
  That $\Eff_\uA$ is not strictly convex means that it contains a line through $0$, that is, we can find two nonzero effective divisors $(E, (E_v)_v)$ and $(E', (E'_v)_v)\in\Div(U;\uA)$ with $E+E^\prime\sim 0$. Hence, there exists a rational function which vanishes on all $E,E_v, E', E_v'$ (and thus is nonconstant), and whose only poles are outside $U_\uA$.

  \smallskip

  Case~\eqref{enum:b_A-unequal}.
  We have seen in Lemma~\ref{lem:descriptions-b_A} that $b_\uA \neq b^\prime_\uA$ if and only if there is a nonconstant invertible regular function $s\in E(U_\uA)$; so, in particular, $\cO_X(U_\uA)\ne K$ in this case.

  Next, assume that $K$ has only one infinite place, that is, that the group of units $\fo_K^\times$ is finite, and let $s\in E(U_\uA)$ be such an invertible regular function. After multiplying $s$ and $s^{-1}$ with appropriate constants, we get regular sections $s$ and $s^\prime$ on $\mU$ such that $ss^\prime=a\in\fo_K$. For a rational point $x\in\mU(\fo_K)$, the value $s(x)$ then has to be a divisor of $a$, of which there are only finitely many.
  The integral point $x$ must thus lie on one of the finitely many subvarieties $\overline{V(s-\alpha)}_{\alpha\mid a}$ of $X$.

  \smallskip
  
  Case~\eqref{enum:torsion}. Consider the embedding
  \[
    \ZZ^{\cA \,\setminus\, \supp{\Delta_\uA}} \to \bigoplus_v\ZZ^{A_v}, \quad
    L\mapsto (L|_{D_{A_v}})_v;
  \]
  let $M$ be the quotient, and observe that it is torsion free. Let $\phi\colon \Pic(U;\uA) \to M$ map the class of $(L, (L_v)_v)$ to the class of $(L_v)_v$; indeed, this is well-defined as any divisor of the form $\sdiv_\uA(f)$ maps to 
  \[
    \left(\sum_{\alpha \not\in\supp\Delta_\uA} \ord_{D_\alpha}(f) D_\alpha \bigg|_{D_{A_v}}\right)_v,
  \]
  which has trivial class in $M$. Then the sequence
  \begin{equation}
    \begin{tikzcd}
    \CH^0(\Delta_\uA) \arrow[r, "(i_{\Delta_\uA})_*"] &
    \Pic(X) \arrow[r, "\pi_\uA"] &
    \Pic(U;\uA) \arrow[r, "\phi"] &
    M \arrow[r] &
    0,
    \end{tikzcd}
  \end{equation}
  where $i_{\Delta_\uA}\colon \Delta_\uA \to X$ is the inclusion, is exact.
  Indeed, the kernel of $\pi_\uA$ is generated by divisors supported on $\Delta_\uA$, hence by the image of the pushforward map $\CH^0(\Delta_\uA)\to \Pic(X)$;
  for exactness on the right, note that $\Pic(X)=(\Div(U)\oplus\ZZ^\cA)/\im(\sdiv_X)$, so the cokernel of $\pi_\uA$ is indeed $\bigoplus_v\ZZ^{A_v}/\ZZ^{\cA\,\setminus\,\supp\Delta_\uA}$, after omitting the part $\ZZ^{\supp \Delta_\uA}$ mapped to $0$ by $\pi_\uA$.

  It follows that every nonzero torsion element $T\in\Pic(U;\uA)$ has to be the image of a (nonzero) element
  $\widetilde T\in\Pic(X)$ such that
  $n \widetilde{T} \in \im(\ZZ^{\supp{\Delta_\uA}})$;
  that is, there are $b_\alpha\in\ZZ$ such that $n \widetilde{T}+\sum b_\alpha D_\alpha \sim 0$. Consider
  \[
    \widetilde{T}^\prime = T + \sum \left\lceil\frac{b_\alpha}{n}\right\rceil D_\alpha.
  \]
  The divisor $\widetilde{T}^\prime$ is nonzero and in the pseudoeffective cone, so, using our assumptions on $X$, it is represented by an effective $\QQ$-divisor $E$.
  The image of $\widetilde{T}^\prime=[E]$ is still $T$, so the image of $[nE]$ is trivial. Working with a suitable multiple of $nE$ that is integral, this means that there is a rational function $s$ that vanishes on the support of $E$ and can only have poles on $\Delta_\uA$. Since the image of $[E]$ in $\Pic(U;\uA)$ is nonzero, the support of $E$ cannot be contained in the support of $\Delta_\uA$. Hence, $s$ is nonconstant and regular on $U_\uA$, and $\cO_X(U_\uA)\ne K$.
\end{proof}

The toric variety studied in Section~\ref{sec:toric} furnishes an example for Theorem~\ref{thm:obstruction-construction}~\eqref{enum:nonconvex}. For the other two cases, we have the following:

\begin{example}\label{ex:unitequation}
  Consider $\PP^n$ over a number field $K$ with $r$ real and $s$ complex places, together with the three hyperplanes $V(x_0)$, $V(x_1)$, and $V(x_0+x_1)$. Their sum does not have strict normal crossings, a situation that can be remedied by blowing up $V(x_0,x_1)$. Call the resulting variety $X$, and consider the pair $(X,D)$ with $D=H_1+H_2+H_3+E$, where the $H_i$ are the strict transforms of the three hyperplanes and $E$ is the exceptional divisor. For $n\ge 3$, the log anticanonical bundle is big (though never nef), and we have
  \[
    U=X\setminus D \cong \bAA^n \setminus (V(x_1)\cup V(x_1+1)).
  \]
  The geometric and every $K_v$-analytic Clemens complex is a ``star'', with the vertex corresponding to $E$ connected to the other three vertices $H_i$. If we take $\uA=(A_v)_v$ with the same maximal face $A_v=A=\{E,H_i\}$ (for some fixed $i$) for all infinite places, we have $U_\uA\cong \bAA^{n-1}\times \multgrp$. Hence,
  \[
    b_\uA=\rk\Pic(U) - \rk E(U) + \sum_{v\mid\infty} \# A = 0 - 2 + 2(r+s)=2(r+s)-2.
  \]
  On the other hand, using~\eqref{eq:sequence-pic-u-a}, the sequence
  \[
    0\to E(U_\uA) \to E(U) \to (\ZZ^A)^{\oplus (r+s)} \to \Pic(U;\uA) \to 0
  \]
  is exact, with the groups to the left having ranks $1$, $2$, and $2(r+s)$, respectively, so
  $b^\prime_\uA=2(r+s)-1$, and there is an obstruction by Theorem~\ref{thm:obstruction-construction}~\eqref{enum:b_A-unequal}. In fact, the set of integral points is not Zariski dense: every integral point lies on one of the subvarieties $\{ax_0-bx_1=0\}$ parametrized by the finitely many solutions $a,b\in\fo_K^\times$ of the unit equation $a+b=1$.
  (Note that by Lemma~\ref{lem:obstruction-vanishes}, there are unobstructed faces over sufficiently large fields, so this failure of Zariski density is not explained by Corollary~\ref{cor:density-obstruction} in general.)
\end{example}

\begin{example}
  Consider $\PP^n$ together with a divisor $D$ having two components: the quadric hypersurface $Q=\{x_0^2=\sum_{i=1}^n x_i^2\}$ and the hyperplane $H=\{x_0=0\}$. If $n\ge 3$, the log anticanonical bundle is ample. The intersection $Q\cap H$ does not contain any $\RR$-points; so, if $K$ is a totally real field, every $K_v$-analytic Clemens complex consists of two isolated vertices. Consider the face $\uA=(H,\dots,H)\in\Canmax_v(D)$. Since the Picard group of $U=\PP^n\setminus D$ is trivial,~\eqref{eq:sequence-pic-u-a} allows us to compute
  \[
    \Pic(U;\uA) \cong \ZZ^r/(2,\dots,2) \cong \ZZ^{r-1} \oplus \ZZ/2\ZZ,
  \]
  and Theorem~\ref{thm:obstruction-construction}~\eqref{enum:torsion} applies.
  Note that if $K=\QQ$, there are only finitely many points corresponding to the solutions of $x_1^2 + \cdots + x_n^2 = 2$, while for larger fields, we get the sets of solutions of
  $x_1^2 + \cdots + x_n^2 = 1 + u$
  for units $u\in\fo_K^\times$.
\end{example}

\subsection{Asymptotic formulas}\label{ssec:asymptotic-formulas}
These definitions allow the interpretation of asymptotic formulas. Keep all the assumptions on $(X,D)$ from the beginning of this chapter, which included $X$ and $D$ being \emph{split}. Let $\mU$ be an integral model of $U$, and assume that $\mU(\fo_K)$ is not thin (whence in particular Zariski dense). Let $H$ be the height function associated with a metric on the log anticanonical bundle $\omega_X(D)^\vee$. We are interested in the asymptotic behavior of the number
\[
  N(B)=\{x\in\mU(\fo_K)\cap V \mid H(x) \le B\}
\]
of integral points of bounded height whose generic point lies on the complement $V$ of an appropriate accumulating thin set $Z \subset X(K)$. If strong approximation holds (using the set of connected components at archimedean places, cf.\ e.g.\ \cite{MR4288633}), asymptotic expansions for $N(B)$ tend to be similar to
\begin{equation}\label{eq:asymptotic-formulas}
  c_\infty c_{\fin} B (\log B)^{b-1}(1+o(1)),
\end{equation}
where
\begin{align*}
  c_\infty &= \frac{1}{\abs{d_K}^{\dim U/2}}
  \sum_{
    \uA\in\cC^{\tmax,\circ}(D)
  }
  \alpha_\uA \prod_{v\mid\infty}
  \tau_{Z_{A_v},v} \left(Z_{A_v}(K_v)\right) \qquad \text{and}
  \\
  c_{\fin} &=
  \rho_K^{\rk\Pic U - \rk E(U)}
  \prod_{v<\infty}
  \left(1-\frac{1}{\#k_v}\right)^{\rk\Pic U - \rk E(U)}
  \tau_{U,v}(\mU(\fo_{K_v})).
\end{align*}
Here, the number $b$ in the exponent of $\log B$ is the maximal value of $b_\uA=b_\uA^\prime$ attained on tuples $\uA$ of maximal faces with $\cO_X(U_\uA) = K$, i.e., on tuples without an obstruction. The sum runs over the set
\[
  \cC^{\tmax,\circ}(D) = \left\{\uA \in \Canmax_\infty(D) \relmiddle| \cO_X(U_\uA) = K,\ b_\uA=b \right\}
\]
of faces $\uA$ on which this maximum $b$ is attained, that is, the set of maximal dimensional faces under those without an obstruction. Corollary~\ref{cor:density-obstruction} guarantees that the sum does not run over the empty set, and Theorem~\ref{thm:obstruction-construction}~\eqref{enum:nonconvex} guarantees that the factors $\alpha_\uA$ are nonzero.

In a more general setting,
the product of volumes has to be replaced by the volume of a suitable subset of adelic points: those points in
\[
  \bigcup_{\uA \in \cC^{\tmax,\circ}(D)}
  \prod_{v<\infty}\mU(\fo_K) \times \prod_v Z_{A_v}(K_v)
\]
that are limit points of integral points (or, if there still are no such limit points, a similarly defined set using maximal faces of smaller dimension necessitating a further change of $b$), making necessary adjustments to the Tagamawa volume on $Z_{A_v}(K_v)$ if there are failures of strong approximation involving some connected components of $U(K_v)$ bordering on $Z_{A_v}(K_v)$ for archimedean $v$ to account for the fact that there are fewer points near this stratum.

Moreover, the factor $\rho_K$ has to be replaced by the principal value of a different $L$-function, and additional factors can appear in the constant, related to failures of strong approximation, to nonsplitness, and to cohomological invariants (similar to $\beta$ in the case of rational points). It is unclear to the author what the shape of such a factor for arbitrary $(X,D)$ should be and under which conditions one should expect it to be different from $1$. Note that the Brauer group modulo constants, whose order $\beta$ is a factor of Peyre's constant for rational points, might be nontrivial even for split~$U$.

We can compare~\eqref{eq:asymptotic-formulas} to results in the framework by Chambert-Loir and Tschinkel. We note a difference in the case of toric varieties, and list the additional factors appearing in these asymptotic formulas.
\begin{itemize}
  \item The formula above agrees with~\cite[Thm.~3.5.6]{MR2999313} on partial equivariant compactifications of vector groups, since the obstruction never occurs in these cases, and since the cones $\Eff_\uA$ are all smooth, satisfying
  \[
    \alpha_\uA=\frac{1}{(b-1)!} \left(\prod_{\alpha\not\in\cA} \frac{1}{\rho_\alpha} \right) \left( \prod_{v\mid\infty} \prod_{\alpha\in A_v} \frac{1}{\rho_\alpha -1} \right)
  \]
  with the description $-K_X=\sum_{\alpha\in\cD} \rho_\alpha D_\alpha$ of the anticanonical divisor as a sum of the boundary components $\{D_\alpha\}_{\alpha\in \cD}$.

  \item Similarly, in the case of partial equivariant compactifications of split semi\-simple groups $G$~\cite{MR3117310}, the obstruction does not occur, and the cones are smooth with a similar description of $\alpha_\uA$, making the formulas compatible. An additional factor is part of the asymptotic formula (18) in op.\ cit.:\ the number $\abs{\chi_{S,D,\lambda}(G)}$ of certain automorphic characters of the underlying group $G$, related to strong approximation on $G$.

  \item The formula~\eqref{eq:asymptotic-formulas} is not compatible with~\cite[Thm.~3.11.5]{arXiv:1006.3345} on toric varieties; it modifies the exponent $b-1$ of $\log B$ and the index set of the sum. Our formula above agrees with the asymptotic formula we determine in Section~\ref{sec:toric}.
  The formula in loc.~cit.\ contains additional factors
  \[
    \frac{\abs{A(T,U,K)^*}}{\abs{A(T)^*}}
    \frac{ \abs{H^1(\Gamma,\Pic(X_E))} }{ \abs{H^1(\Gamma,M_E)} }:
  \]
  two groups of automorphic characters, related to weak and strong approximation on $T$, and cohomology groups from the action of the Galois group (which is trivial in the split case). Moreover, the volume is taken on the subset of the adelic points cut out by these automorphic characters.

  \item The formula is compatible with~\cite{arxiv:2109.06778}, treating integral points of several open subvarieties of the minimal desingularization of a singular quartic del Pezzo surface. This variety is an example of a nontoric variety in which the construction of $\alpha_\uA$ does not lead to a simplicial cone.
\end{itemize}

\section{Integral points on a toric threefold}\label{sec:toric}

The aim of this section is to provide an asymptotic formula for the number $N(B)$ of integral points of height at most $B$ on the toric variety $X$ defined in the introduction. Integral points on toric varieties are treated by Chambert-Loir and Tschinkel in~\cite{arXiv:1006.3345}; however, our result contradicts part of this (unfinished) work.
After parametrizing the set of integral points using a universal torsor in Section~\ref{ssec:toric-torsor}, we determine an asymptotic formula in Section~\ref{ssec:toric-counting}, proving Theorem~\ref{thm:intro-count}. The exponent of $\log B$ is $1$ less than the one given in~\cite{arXiv:1006.3345}, which is explained by an obstruction to the existence of integral points on a certain part of $X$: Chambert-Loir's and Tschinkel's asymptotic formula is associated with the one-dimensional face $\{E_1,E_2\}$ of the Clemens complex. There is a function obstructing the Zariski density of integral points near $E_1$ and $E_2$, which also makes the leading constant of their asymptotic formula vanish.
In Section~\ref{ssec:toric-interpretation}, we compare our formula to the one given by Chambert-Loir and Tschinkel in greater detail and get a very similar geometric interpretation to theirs (Theorem~\ref{thm:interpreted-count}), associated with the maximal, but only zero-dimensional face $M$ of the Clemens complex.
\begin{theorem}\label{thm:interpreted-count}
  The number of integral points of bounded height satisfies the asymptotic formula
  \[
    N(B) = c_{\infty} c_{\fin}B (\log B)^{b_M-1} (1+o(1)),
  \]
  with
  \begin{align*}
    c_{\infty} &=\alpha_M \tau_{M,\infty}(M(\RR)),\\
    c_{\fin} &= \prod_p \left(1-\frac{1}{p}\right)^{\rk\Pic U} \tau_{U,p}(\mU(\ZZ_p)),
  \end{align*}
  where all constants are associated with the maximal, but not maximal-dimensional, face $M$ of the Clemens complex.
  More explicitly,
  \[
    N(B)=c B (\log B)^2 + O(B\log B(\log\log B)^3)\text{,}
  \]
  where
  \[
    c=4 \prod_p \left(\left(1-\frac{1}{p}\right)^2\left(1+\frac{2}{p}-\frac{1}{p^2}-\frac{1}{p^3}\right)\right) \text{.}
  \]
\end{theorem}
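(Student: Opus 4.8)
\emph{Proof plan.} The explicit formula $N(B) = cB(\log B)^2 + O(B\log B(\log\log B)^3)$ with $c = 4\prod_p\bigl((1-\tfrac1p)^2(1+\tfrac2p-\tfrac1{p^2}-\tfrac1{p^3})\bigr)$ is exactly Theorem~\ref{thm:intro-count}, proved via the universal torsor in Sections~\ref{ssec:toric-torsor} and~\ref{ssec:toric-counting}; so the content of Theorem~\ref{thm:interpreted-count} is to reorganise the constant $c$ into the conceptual shape $c_\infty c_\fin$ associated with the face $M$ and to verify that the exponent equals $b_M - 1 = 2$. Here is how I would proceed. First, the geometry: since $\pi$ is a composition of two blow-ups along smooth rational curves, $\Pic X \cong \ZZ^5$; the three components of $D$ are $E_1$, $E_2$ and $M = \pi^* V(a_0)$, so $\#\cA = 3$, and since $V(a_0)$ is disjoint from $l_1$ and $l_2$ one has $M \cong \PP^1\times\PP^1$. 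The sequence~\eqref{eq:localization-seq} gives $\rk\Pic U = 2$ and $\rk E(U) = 0$. Since $l_1\cap l_2$ is a single point, $E_1\cap E_2$ is a curve ($\cong\PP^1$), while $M$ is disjoint from $E_1\cup E_2$; hence the archimedean Clemens complex (over $\QQ$, one place) has vertices $E_1,E_2,M$, the single edge $\{E_1,E_2\}$ and the isolated vertex $M$, with maximal faces $\{E_1,E_2\}$ and $M$. By Lemma~\ref{lem:descriptions-b_A}, $b_{\{E_1,E_2\}} = 5-3+2 = 4$ (the exponent predicted in~\cite{arXiv:1006.3345}) and $b_M = 5-3+1 = 3$.

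Next I would identify the relevant faces. The edge is obstructed: $\Delta_{\{E_1,E_2\}} = M$, so $U_{\{E_1,E_2\}} = X\setminus M$, and the character $f = a_1/a_0$ of $T_0$ spreads out to a nonconstant regular function on $X\setminus M$ (its divisor on $X$ has $M$ as its only polar component); hence Corollary~\ref{cor:obstruction} applies and this face cannot contribute. For $M$ we have $\Delta_M = E_1+E_2$ and $U_M = X\setminus(E_1\cup E_2)$; a fibrewise degree argument — the restriction of $\cO_X(nE_1+mE_2)$ to a general fibre of $E_i\to l_i$ is $\cO_{\PP^1}(-n)$, resp.\ $\cO_{\PP^1}(-m)$, which has no nonzero section for $n,m>0$ — shows $\cO_X(U_M) = \QQ$, so $M$ is unobstructed; moreover $\Pic(U;M)\cong\Pic(U_M)\cong\ZZ^3$ is torsion free with $E(U_M) = 0$, whence $b'_M = b_M = 3$, and by the contrapositive of Theorem~\ref{thm:obstruction-construction}~\eqref{enum:nonconvex} the cone $\Eff_M$ is strictly convex. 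Thus $\cC^{\tmax,\circ}(D) = \{M\}$, $b = b_M = 3$, and the power $(\log B)^{b_M-1} = (\log B)^2$ is accounted for.

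Then I would compute the three factors and multiply. Under $\Pic(U_M) = \Pic X/\langle E_1,E_2\rangle\cong\ZZ^3$, the effective cone $\Eff_M$ is the first octant (generated by the images of $M$ and of the rulings $V(b_i),V(c_i)$) and $\pi_M(\omega_X(D)^\vee)$ is the class $(1,2,2)$, so $\alpha_M = \tfrac1{2!}\cdot\tfrac1{1\cdot2\cdot2} = \tfrac18$. On $M\cong\PP^1\times\PP^1$ the metric on $\omega_X(D)^\vee$ restricts by adjunction along $M$ — the factor $\|1_{\Delta_M}\|$ being a nonvanishing, in fact constant, function because $E_1+E_2$ is disjoint from $M$ — to the standard $\max$-metric on $\omega_M^\vee = \cO(2,2)$; the residue measure $\tau_{M,\infty}$ is then $c_\RR^{\#A_M} = 2$ times the product of two standard $\PP^1(\RR)$-Tamagawa measures, each of total mass $4$, giving $\tau_{M,\infty}(M(\RR)) = 2\cdot 16 = 32$ and $c_\infty = \alpha_M\tau_{M,\infty}(M(\RR)) = 4$. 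Since $\mU$ is smooth over $\ZZ$, $\tau_{U,p}(\mU(\ZZ_p)) = \#\mU(\ZZ/p)/p^3$ for every $p$, and counting the points of $\PP^1\times\PP^1\times\PP^1$ over $\ZZ/p$ away from $V(a_0)$, $l_1$, $l_2$ yields $\#\mU(\ZZ/p) = p^3 + 2p^2 - p - 1$, i.e.\ $\tau_{U,p}(\mU(\ZZ_p)) = 1+\tfrac2p-\tfrac1{p^2}-\tfrac1{p^3}$; with $\rk\Pic U - \rk E(U) = \rk\Pic U = 2$ and, for $K = \QQ$, $|d_\QQ| = 1$ and $\rho_\QQ = 1$, this gives $c_\fin = \prod_p(1-\tfrac1p)^2(1+\tfrac2p-\tfrac1{p^2}-\tfrac1{p^3})$. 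Multiplying, $c_\infty c_\fin = 4\prod_p(1-\tfrac1p)^2(1+\tfrac2p-\tfrac1{p^2}-\tfrac1{p^3}) = c$, which coincides with the constant of Theorem~\ref{thm:intro-count}, completing the proof.

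The main obstacle is the normalisation bookkeeping in the last step: one must make sure that the archimedean integral actually produced by the torsor computation of Section~\ref{ssec:toric-counting} equals $\alpha_M\tau_{M,\infty}(M(\RR))$ under the conventions in force (the factor $c_\RR = 2$ in the residue measure, the Lebesgue/Haar normalisations, the identification of the height-induced metric on $\omega_X(D)^\vee$ and of its adjunction restriction to $M$, the triviality of $\|1_{\Delta_M}\|$), and likewise that the local densities extracted from the torsor parametrisation are exactly $\#\mU(\ZZ/p)/p^3$; neither identification is deep, but each requires care to avoid spurious powers of $2$ or of $p$.
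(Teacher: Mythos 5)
Your proposal is essentially correct and follows the same strategy as the paper: compute the geometric invariants $b_M$, $b'_M$, $\alpha_M$, the residue measure $\tau_{M,\infty}(M(\RR))$, and the $p$-adic densities, then match $c_\infty c_\fin$ against the constant $c$ from Theorem~\ref{thm:intro-count}. The computations of $b_M = b'_M = 3$ and $\alpha_M = 1/8$ agree with Proposition~\ref{prop:toric-interpretation}, and your $\tau_{U,p}(\mU(\ZZ_p)) = 1+\tfrac2p-\tfrac1{p^2}-\tfrac1{p^3}$ matches Lemma~\ref{lem:toric-volumes}.

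You diverge from the paper in two spots worth noting. First, for the nonarchimedean densities you invoke the Weil point-count formula $\tau_{U,p}(\mU(\ZZ_p)) = \#\mU(\mathbb{F}_p)/p^3$ \emph{at every prime}, whereas the paper performs a direct $p$-adic integration against its explicit metric. Your approach is cleaner but does carry a hidden step: the identity holds at all primes only when the adelic metric on $\omega_X(D)^\vee$ is induced by the smooth $\ZZ$-model everywhere, which is true for the Cox-ring/toric metric used here but should be said out loud. Second, to show $M$ is unobstructed you give a fibrewise degree argument establishing $\cO_X(U_M)=\QQ$ and then appeal to the contrapositive of Theorem~\ref{thm:obstruction-construction}~(\ref{enum:nonconvex}) for strict convexity, whereas Proposition~\ref{prop:toric-interpretation} simply lists the generators of $\Eff_M$ and observes directly that they span a simplicial cone; both routes are valid.

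On the archimedean volume: you obtain $\tau_{M,\infty}(M(\RR)) = 2\cdot 16 = 32$ (two $\PP^1(\RR)$-Tamagawa masses of $4$ each, times the renormalisation $c_\RR^{\#A_M}=2$, with $\|1_{\Delta_M}\|$ identically $1$ on $M$). This is in fact the value forced by the comparison with Theorem~\ref{thm:intro-count}: $c_\infty = \alpha_M\tau_{M,\infty}(M(\RR)) = \tfrac18\cdot 32 = 4$, as required. The statement of Lemma~\ref{lem:toric-volumes} records the \emph{un}renormalised value $16$ even though its own proof notes the further factor $c_\RR=2$; so your $32$ is consistent with the proof of that lemma (and with the final identity $c = c_\infty c_\fin$), while the number printed in the lemma statement is missing the renormalisation. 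You handled this correctly; the paper's displayed $16$ is a slip.
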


\subsection{Passage to a universal torsor}\label{ssec:toric-torsor}

The fan $\Sigma_X$ of $X$ can be obtained by starting with the fan of $\PP^1\times\PP^1\times\PP^1$, then subdividing it by adding the ray $\rho_x=\RR(-1,-1,0)$ (corresponding to the exceptional divisor $E_1$), then further subdividing it by adding the ray $\rho_y=\RR(-1,0,-1)$ (corresponding to $E_2$).
The Picard group of $X$ is
\[
  \Pic(X)=\ZZ \pi^*[H_1] + \ZZ \pi^*[H_2] + \ZZ \pi^*[H_3] +\ZZ [E_1] + \ZZ [E_2] \cong \ZZ^5 \text{,}
\]
where $H_1$, $H_2$, and $H_3$ are planes of degree $(1,0,0)$, $(0,1,0)$, and $(0,0,1)$, respectively.

\begin{figure}[ht]
  \begin{center}
\tdplotsetmaincoords{250}{170}
\begin{tikzpicture}[tdplot_main_coords]
  \def\pcolor{gray!90}
  \def\popacity{0.1}
  \def\dstyle{dotted}

  \draw[thick] (0,0,0) -- (2,0,0)   node[anchor=east]{$a_0$};
  \draw[thick] (0,0,0) -- (0,2,0)   node[anchor=south]{$b_0$};
  \draw[thick] (0,0,0) -- (0,0,2)   node[anchor=north]{$c_0$};
  \draw[thick] (0,0,0) -- (-2,0,0)  node[anchor=west]{$a_1$};
  \draw[thick] (0,0,0) -- (0,-2,0)  node[anchor=north]{$b_1$};
  \draw[thick] (0,0,0) -- (0,0,-2)  node[anchor=south]{$c_1$};
  \draw[thin] (0,0,0) -- (-1,-1,0) node[anchor=north west]{$x$};
  \draw[thin] (0,0,0) -- (-1,0,-1) node[anchor=south west]{$y$};

  \path[fill=\pcolor, opacity=\popacity] (0,0,0) -- (2,0,0) -- (0,2,0);
  \draw[\dstyle]                                    (2,0,0) -- (0,2,0);
  \path[fill=\pcolor, opacity=\popacity] (0,0,0) -- (2,0,0) -- (0,0,2);
  \draw[\dstyle]                                    (2,0,0) -- (0,0,2);
  \path[fill=\pcolor, opacity=\popacity] (0,0,0) -- (0,2,0) -- (0,0,2);
  \draw[\dstyle]                                    (0,2,0) -- (0,0,2);

  \path[fill=\pcolor, opacity=\popacity] (0,0,0) -- (2,0,0) -- (0,2,0);
  \draw[\dstyle]                                    (2,0,0) -- (0,2,0);
  \path[fill=\pcolor, opacity=\popacity] (0,0,0) -- (2,0,0) -- (0,0,-2);
  \draw[\dstyle]                                    (2,0,0) -- (0,0,-2);
  \path[fill=\pcolor, opacity=\popacity] (0,0,0) -- (0,2,0) -- (0,0,-2);
  \draw[\dstyle]                                    (0,2,0) -- (0,0,-2);

  \path[fill=\pcolor, opacity=\popacity] (0,0,0) -- (2,0,0) -- (0,-2,0);
  \draw[\dstyle]                                    (2,0,0) -- (0,-2,0);
  \path[fill=\pcolor, opacity=\popacity] (0,0,0) -- (2,0,0) -- (0,0,2);
  \draw[\dstyle]                                    (2,0,0) -- (0,0,2);
  \path[fill=\pcolor, opacity=\popacity] (0,0,0) -- (0,-2,0) -- (0,0,2);
  \draw[\dstyle]                                    (0,-2,0) -- (0,0,2);

  \path[fill=\pcolor, opacity=\popacity] (0,0,0) -- (2,0,0) -- (0,-2,0);
  \draw[\dstyle]                                    (2,0,0) -- (0,-2,0);
  \path[fill=\pcolor, opacity=\popacity] (0,0,0) -- (2,0,0) -- (0,0,-2);
  \draw[\dstyle]                                    (2,0,0) -- (0,0,-2);
  \path[fill=\pcolor, opacity=\popacity] (0,0,0) -- (0,-2,0) -- (0,0,-2);
  \draw[\dstyle]                                    (0,-2,0) -- (0,0,-2);

  \path[fill=\pcolor, opacity=\popacity] (0,0,0) -- (-2,0,0) -- (0,2,0);
  \draw[\dstyle]                                    (-2,0,0) -- (0,2,0);
  \path[fill=\pcolor, opacity=\popacity] (0,0,0) -- (-2,0,0) -- (0,0,2);
  \draw[\dstyle]                                    (-2,0,0) -- (0,0,2);
  \path[fill=\pcolor, opacity=\popacity] (0,0,0) -- (0,2,0) -- (0,0,2);
  \draw[\dstyle]                                    (0,2,0) -- (0,0,2);

  \path[fill=\pcolor, opacity=\popacity] (0,0,0) -- (-2,0,0) -- (0,2,0);
  \draw[\dstyle]                                    (-2,0,0) -- (0,2,0);
  \path[fill=\pcolor, opacity=\popacity] (0,0,0) -- (-2,0,0) -- (-1,0,-1);
  \draw[\dstyle]                                    (-2,0,0) -- (-1,0,-1);
  \path[fill=\pcolor, opacity=\popacity] (0,0,0) -- (-1,0,-1) -- (0,0,-2);
  \draw[\dstyle]                                    (-1,0,-1) -- (0,0,-2);
  \path[fill=\pcolor, opacity=\popacity] (0,0,0) -- (0,2,0) -- (0,0,-2);
  \draw[\dstyle]                                    (0,2,0) -- (0,0,-2);
  \path[fill=\pcolor, opacity=\popacity] (0,0,0) -- (0,2,0) -- (-1,0,-1);
  \draw[\dstyle]                                    (0,2,0) -- (-1,0,-1);

  \path[fill=\pcolor, opacity=\popacity] (0,0,0) -- (-2,0,0) -- (-1,-1,0);
  \draw[\dstyle]                                    (-2,0,0) -- (-1,-1,0);
  \path[fill=\pcolor, opacity=\popacity] (0,0,0) -- (-1,-1,0) -- (0,-2,0);
  \draw[\dstyle]                                    (-1,-1,0) -- (0,-2,0);
  \path[fill=\pcolor, opacity=\popacity] (0,0,0) -- (-2,0,0) -- (0,0,2);
  \draw[\dstyle]                                    (-2,0,0) -- (0,0,2);
  \path[fill=\pcolor, opacity=\popacity] (0,0,0) -- (0,-2,0) -- (0,0,2);
  \draw[\dstyle]                                    (0,-2,0) -- (0,0,2);
  \path[fill=\pcolor, opacity=\popacity] (0,0,0) -- (-1,-1,0) -- (0,0,2);
  \draw[\dstyle]                                    (-1,-1,0) -- (0,0,2);

  \path[fill=\pcolor, opacity=\popacity] (0,0,0) -- (-2,0,0) -- (-1,-1,0);
  \draw[\dstyle]                                    (-2,0,0) -- (-1,-1,0);
  \path[fill=\pcolor, opacity=\popacity] (0,0,0) -- (-1,-1,0) -- (0,-2,0);
  \draw[\dstyle]                                    (-1,-1,0) -- (0,-2,0);
  \path[fill=\pcolor, opacity=\popacity] (0,0,0) -- (-2,0,0) -- (-1,0,-1);
  \draw[\dstyle]                                    (-2,0,0) -- (-1,0,-1);
  \path[fill=\pcolor, opacity=\popacity] (0,0,0) -- (-1,0,-1) -- (0,0,-2);
  \draw[\dstyle]                                    (-1,0,-1) -- (0,0,-2);
  \path[fill=\pcolor, opacity=\popacity] (0,0,0) -- (0,-2,0) -- (0,0,-2);
  \draw[\dstyle]                                    (0,-2,0) -- (0,0,-2);
  \path[fill=\pcolor, opacity=\popacity] (0,0,0) -- (-1,-1,0) -- (0,0,-2);
  \draw[\dstyle]                                    (-1,-1,0) -- (0,0,-2);
  \path[fill=\pcolor, opacity=\popacity] (0,0,0) -- (-1,-1,0) -- (-1,0,-1);
  \draw[\dstyle]                                    (-1,-1,0) -- (-1,0,-1);
\end{tikzpicture}
    \caption[The fan $\Sigma_X$ of $X$]{The fan $\Sigma_X$ of $X$, its rays labeled with the corresponding generators of the Cox ring.}
  \end{center}
\end{figure}
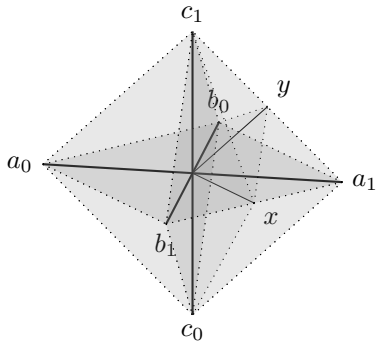

Aiming to count integral points by a parametrization using a universal torsor, we start by determining the Cox ring; cf.\ for example~\cite[\S\,2.1.3]{MR3307753} for background on the following constructions.
The Cox ring of $X$ is $R_X=\QQ[a_0,a_1,b_0,b_1,c_0,c_1,x,y]$, its generators corresponding to the rays of $\Sigma_X$. It is graded by $\Pic(X)$, the degree of each generator being the class of the corresponding divisor in the Picard group.  Under the above isomorphism $\Pic(X)\cong \ZZ^5$, the grading is thus given by Table~\ref{tab:degrees}.
\begin{table}[ht]
  \begin{tabular}{cccccccc}\toprule 
    $a_0$ & $a_1$ & $b_0$ & $b_1$ & $c_0$ & $c_1$ &   $x$ &   $y$ \\ \midrule
      $1$ &   $1$ &   $0$ &   $0$ &   $0$ &   $0$ &   $0$ &   $0$ \\
      $0$ &   $0$ &   $1$ &   $1$ &   $0$ &   $0$ &   $0$ &   $0$ \\
      $0$ &   $0$ &   $0$ &   $0$ &   $1$ &   $1$ &   $0$ &   $0$ \\
      $0$ &  $-1$ &   $0$ &  $-1$ &   $0$ &   $0$ &   $1$ &   $0$ \\
      $0$ &  $-1$ &   $0$ &   $0$ &   $0$ &  $-1$ &   $0$ &   $1$ \\ \bottomrule
  \end{tabular}
  \caption{The generators of $R_X$ and their degrees in $\ZZ^5 \cong \Pic(X)$.}\label{tab:degrees}
\end{table}

The irrelevant ideal is generated by the set $\{\prod g \mid \rho_g \not\subset \sigma\}_{\sigma\in\Sigma^{\text{(max)}}}$; it is thus
\begin{align*}
  I_\irr=(
  &a_1b_1c_1xy,\,
  a_1b_0c_1xy,\,
  a_1b_1c_0xy,\,
  a_1b_0c_0xy,\\
  &a_0b_1c_1xy,\,
  a_0b_0b_1c_1y,\,
  a_0a_1b_0c_1y,\,
  a_0b_1c_0c_1x,\\
  &a_0a_1b_1c_0x,\,
  a_0a_1b_0c_0y,\,
  a_0b_0b_1c_0c_1,\,
  a_0a_1b_0b_1c_0
  ),
\end{align*}
and we get a universal torsor $Y=\Spec R_X \setminus V(I_\irr) \to X$. 
The image of a point 
\[
  (a_0,a_1,b_0,b_1,c_0,c_1,x,y)\in Y(\QQ)
\]
is denoted by $\pc{a_0:a_1:b_0:b_1:c_0:c_1:x:y}\in Y(\QQ)$ (expressed in \emph{Cox coordinates}), and is further mapped to $(\pc{a_0:a_1xy},\, \pc{b_0:b_1x},\, \pc{c_0:c_1y})\in X_0(\QQ)$ by the blow-up morphism $\pi$.

\begin{lemma}\label{lem:description-as-difference}
  The log anticanonical bundle is big, i.e., in the interior of the effective cone, but it is not nef. It has the description $\omega_X(D)^\vee\cong \cL_1 \otimes \cL_2^\vee$ as a quotient of base point free bundles, where the class of $\cL_1$ is $(2,2,2,-2,-2)$, and the class of $\cL_2$ is $(1,0,0,0,0)$ under the above isomorphism $\Pic(X)\cong\ZZ^5$.
\end{lemma}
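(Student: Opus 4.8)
The plan is to read everything off the toric data of $X$: the Cox ring $R_X$, the fan $\Sigma_X$, and the $\Pic(X)$-grading recorded in Table~\ref{tab:degrees}. First I would determine the class of $\omega_X(D)^\vee$. Since $X$ is smooth and complete, $-K_X=\sum_\rho D_\rho$ is the sum of the eight torus-invariant prime divisors, i.e.\ the sum of the columns of Table~\ref{tab:degrees}, which is $(2,2,2,-1,-1)$. The plane $V(a_0)$ is disjoint from both centres of the blow-up, since $V(a_0)\cap l_1=V(a_0,a_1,b_1)=\emptyset$ and $V(a_0)\cap l_2=V(a_0,a_1,c_1)=\emptyset$; hence $M=\pi^{-1}(V(a_0))$ is the total transform $\pi^*V(a_0)$ and has class $\pi^*[H_1]=(1,0,0,0,0)$. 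With $[E_1]=(0,0,0,1,0)$ and $[E_2]=(0,0,0,0,1)$ this gives $[D]=(1,0,0,1,1)$, so
\[
  [\omega_X(D)^\vee]=(2,2,2,-1,-1)-(1,0,0,1,1)=(1,2,2,-2,-2)=(2,2,2,-2,-2)-(1,0,0,0,0),
\]
which is exactly the asserted decomposition; I take $\cL_1$ and $\cL_2$ to be the line bundles with classes $(2,2,2,-2,-2)$ and $(1,0,0,0,0)$.

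Next I would verify that $\cL_1$ and $\cL_2$ are base point free, so that $\omega_X(D)^\vee\cong\cL_1\otimes\cL_2^\vee$ is a quotient of base point free bundles. For $\cL_2$ this is immediate: its class is $\pi^*\cO_{X_0}(1,0,0)$, the pullback of a (very ample, hence) base point free bundle; concretely $H^0(X,\cL_2)$ is spanned by the Cox monomials $a_0$ and $a_1xy$. For $\cL_1=\cO_X(1,1,1,-1,-1)^{\otimes 2}$ it is enough to show $\cO_X(1,1,1,-1,-1)$ is base point free; its global sections are spanned by the five Cox monomials $a_1b_0c_0$, $a_1b_0c_1y$, $a_1b_1c_0x$, $a_1b_1c_1xy$, $a_0b_1c_1$, and by the standard toric base-point-freeness criterion I would check that each of the twelve monomials generating $I_{\irr}$ (one per maximal cone of $\Sigma_X$) is divisible by one of these five — a short finite verification. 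A tensor product of base point free bundles being base point free, $\cL_1$ is too.

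For the remaining two assertions: $\overline{\Eff}_X=\sum_\rho\RR_{\ge 0}[D_\rho]$ is a full-dimensional polyhedral cone, so a class is big precisely when it is a strictly positive combination of all eight generators; one such combination of $(1,2,2,-2,-2)$ is easily exhibited (e.g.\ with coefficients $\tfrac{1}{10},\tfrac{9}{10}$ on $[D_{a_0}],[D_{a_1}]$, $\tfrac{4}{5},\tfrac{6}{5}$ on $[D_{b_0}],[D_{b_1}]$ and again on $[D_{c_0}],[D_{c_1}]$, and $\tfrac{1}{10}$ on each of $[E_1],[E_2]$), so $\omega_X(D)^\vee$ is big; equivalently the lattice polytope of $\omega_X(D)^\vee$ is visibly $3$-dimensional. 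It is not nef: write $\omega_X(D)^\vee\sim D_{a_1}+D_{b_0}+D_{b_1}+D_{c_0}+D_{c_1}$ and intersect with the torus-invariant curve $C$ attached to the wall of $\Sigma_X$ spanned by the rays of $b_1$ and $c_1$ — geometrically, $C$ is the strict transform of the line $\PP^1\times\{[1:0]\}\times\{[1:0]\}\subset X_0$ through $l_1\cap l_2$. The wall relation $u_{a_0}+u_x=u_{b_1}$ shows that among the five summands only $D_{b_1}$ meets $C$, and $D_{b_1}\cdot C=-1$, so $\omega_X(D)^\vee\cdot C=-1<0$. (One can cross-check this: $C$ has intersection numbers $1,0,0,1,0$ with $\pi^*[H_1],\pi^*[H_2],\pi^*[H_3],E_1,E_2$, and pairing with $(1,2,2,-2,-2)$ again yields $-1$.)

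Every step is elementary; the real work is in keeping the toric bookkeeping straight — fixing the dictionary between the eight Cox variables and the rays (so that $a_0\leftrightarrow e_1$, $a_1\leftrightarrow-e_1$, $x\leftrightarrow(-1,-1,0)$, $y\leftrightarrow(-1,0,-1)$, etc., as forced by the blow-up description of $\Sigma_X$), and the dictionary between the twelve maximal cones and the twelve generators of $I_{\irr}$, so that the divisibility check in the base-point-freeness step and the wall relation in the non-nefness step are run against the correct combinatorial data. The only other point worth spelling out carefully is why $[M]=(1,0,0,0,0)$, which is the observation that $V(a_0)$ avoids both blow-up centres and hence $M$ is a pullback.
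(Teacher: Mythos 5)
Your computations are all correct, and for most of the lemma you follow the same route as the paper: identify the class $(1,2,2,-2,-2)$ from the Cox data, exhibit it as a strictly positive rational combination of the eight generator degrees to get bigness, and verify base point freeness of $\cL_1$, $\cL_2$ by producing explicit monomial sections that do not vanish simultaneously on the Cox torsor (the paper lists the same five sections squared and leaves the non-simultaneous-vanishing check implicit; your reformulation ``each generator of $I_{\irr}$ is divisible by one of the sections'' is a clean and correct way to make that check explicit). Where you genuinely diverge is the non-nefness: you argue intersection-theoretically, writing $\omega_X(D)^\vee\sim D_{a_1}+D_{b_0}+D_{b_1}+D_{c_0}+D_{c_1}$ and pairing against the invariant curve $C$ of the wall $\langle\rho_{b_1},\rho_{c_1}\rangle$, getting $\omega_X(D)^\vee\cdot C=-1$; the paper instead observes that $b_1c_1$ divides every global section of every multiple of the log anticanonical bundle, so it is not semi-ample, and then invokes that nef equals semi-ample on a complete toric variety. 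Both are short; the curve computation is more self-contained and transparently exhibits the failure of nefness (and gives a concrete curve to point at), while the paper's argument dovetails with the section analysis it is already doing for the $\cL_1\otimes\cL_2^\vee$ decomposition and avoids any intersection theory. One small simplification worth noting: you do not need the geometric observation that $M$ is a pullback to get $[M]=(1,0,0,0,0)$; since $M=V(a_0)$ in Cox coordinates, its class is just $\deg(a_0)$ read off Table~\ref{tab:degrees}, which is how the paper does it.
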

\begin{proof}
  In Cox coordinates, the exceptional divisors are defined by $E_1=V(x)$ and $E_2=V(y)$ and the third component of the boundary is $M=V(a_0)$.
  The log anticanonical class $\omega_X(D)$ corresponds to
  \[
    (1,2,2,-2,-2)=\sum_{g\text{ generator of } R_X} \deg(g) - \deg(x)-\deg(y)-\deg(a_0)
  \]
  under the above isomorphism $\Pic(X)\cong\ZZ^5$.
  It is not base point free, since $b_1c_1$ divides all of its global sections. Since the same holds for all its multiples, it is not semi-ample and, as a consequence, not nef, since the two notions coincide on toric varieties. It is, however, big: the effective cone is generated by the degrees of the generators of the Cox ring, and 
  \begin{equation*}
    (1,2,2,-2,-2) = \tfrac{\deg(a_0) + 3 \deg(a_1) + \deg(b_0) + 7\deg(b_1) + \deg(c_0) + 7\deg(c_1) + 2\deg(x) + 2\deg(y)}{4}
  \end{equation*}
  is in its interior. We shall use its description $(2,2,2,-2,-2)-(1,0,0,0,0)$ as a difference of base point free classes to construct a corresponding height function. The sets
  \begin{equation}\label{eq:sections}
  \{a_1^2b_0^2c_0^2,
    a_1^2b_1^2c_0^2x^2,
    a_1^2b_0^2c_1^2y^2,
    a_1^2b_1^2c_1^2x^2y^2,
    a_0^2b_1^2c_1^2
  \} \text { and } \{
    a_0, a_1xy
  \}
  \end{equation}
  of elements of the Cox ring correspond to global sections of $\cL_1$ and $\cL_2$, respectively. Neither of these sets can vanish simultaneously, so both classes are indeed base point free.
\end{proof}

These choices of sections induce metrics on the bundles $\cL_1$, $\cL_2$, and $\cL_1\otimes \cL_2^\vee \cong \omega_X(D)^\vee$, which in turn induce a log anticanonical height function.

\begin{lemma}\label{lem:4-to-1-and-height}
  There is a $4$-to-$1$-correspondence between the set of integral points $\mU(\ZZ)\cap T(\QQ)$ and the set
  \begin{equation*}
  \{(1,a_1,b_1,b_2,c_1,c_2,1,1)\in\ZZnz^8 \mid \text{\eqref{eq:gcd} holds} \} \subset Y(\QQ),
  \end{equation*}
  where 
  \begin{equation}\label{eq:gcd}
    \gcd(a_1b_0c_0,a_1b_0c_1,a_1b_1c_0,b_1c_1)=1.
  \end{equation}
  The log anticanonical height of the image of a point $(1,a_1,b_0,b_1,c_0,c_1,1,1)$ in the above set is
  \begin{equation}\label{eq:height}
    H(a_1,b_0,b_1,c_0,c_1)=\abs{a_1} \max\{\abs{b_0^2},\abs{b_1^2}\} \max\{\abs{c_0^2},\abs{c_1^2}\}\text{.}
  \end{equation}
\end{lemma}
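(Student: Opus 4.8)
The plan is to prove the two assertions separately: the parametrisation is elementary once one unwinds the Cox coordinates, and the height is a short bookkeeping computation based on the description of $\omega_X(D)^\vee$ in Lemma~\ref{lem:description-as-difference}. Recall that a point $(a_0,a_1,b_0,b_1,c_0,c_1,x,y)\in Y(\QQ)$ maps to $X_0(\QQ)$ via $(\pc{a_0:a_1xy},\pc{b_0:b_1x},\pc{c_0:c_1y})$, so a tuple of the special shape $(1,a_1,b_0,b_1,c_0,c_1,1,1)$ with nonzero entries goes to $(\pc{1:a_1},\pc{b_0:b_1},\pc{c_0:c_1})$ and lies in $T(\QQ)\cap Y(\QQ)$, as no monomial generator of $I_\irr$ vanishes on it. The first step is the elementary observation that, for $(a_1,b_0,b_1,c_0,c_1)\in\ZZnz^5$, condition~\eqref{eq:gcd} is equivalent to the conjunction $\gcd(a_1,b_1)=\gcd(a_1,c_1)=\gcd(b_0,b_1)=\gcd(c_0,c_1)=1$: if a prime $p$ divides all four of $a_1b_0c_0,a_1b_0c_1,a_1b_1c_0,b_1c_1$, then from $p\mid b_1c_1$ it divides $b_1$ or $c_1$, and in either case the remaining three divisibilities together with the four coprimalities force $p$ into $\gcd(b_0,b_1)$ or $\gcd(c_0,c_1)$; conversely, a prime violating any single one of the four coprimalities divides all four generators in~\eqref{eq:gcd} at once.

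Granting this, a special-shape tuple satisfying~\eqref{eq:gcd} maps into $\mU(\ZZ)\cap T(\QQ)$, since all entries are nonzero, $a_0=1$ is a unit, and the four coprimalities are exactly those in~\eqref{eq:integral-points-intro}. Conversely, given $P\in\mU(\ZZ)\cap T(\QQ)$, write it as $(\pc{A_0:A_1},\pc{B_0:B_1},\pc{C_0:C_1})$ with coprime integer pairs; since $A_0=\pm1$, exactly one representative of the first pair has first coordinate $1$, which pins down $a_1$, while $(b_0,b_1)=\pm(B_0,B_1)$ and $(c_0,c_1)=\pm(C_0,C_1)$ — and only the sign choices, not other integer multiples, satisfy~\eqref{eq:gcd}, because the equivalence above forces $\gcd(b_0,b_1)=\gcd(c_0,c_1)=1$. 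Thus there are exactly $1\cdot2\cdot2=4$ preimages, proving the $4$-to-$1$ correspondence. (Alternatively, this is an instance of the torsor description of integral points on toric varieties, cf.~\cite{MR2520770}.)

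For the height, Lemma~\ref{lem:description-as-difference} gives $\omega_X(D)^\vee\cong\cL_1\otimes\cL_2^\vee$ with the metrics induced by the two sets of sections in~\eqref{eq:sections}. By the formula for a height attached to a quotient of base point free bundles, written in Cox coordinates (which is independent of the chosen representative by the product formula), for $(1,a_1,b_0,b_1,c_0,c_1,1,1)$ one gets
\[
  H(x)=\prod_v\frac{\max\{\abs{a_1^2b_0^2c_0^2}_v,\abs{a_1^2b_1^2c_0^2}_v,\abs{a_1^2b_0^2c_1^2}_v,\abs{a_1^2b_1^2c_1^2}_v,\abs{b_1^2c_1^2}_v\}}{\max\{1,\abs{a_1}_v\}}.
\]
At a finite place $p$ the numerator equals $\max\{\abs{a_1}_p^2\max(\abs{b_0}_p,\abs{b_1}_p)^2\max(\abs{c_0}_p,\abs{c_1}_p)^2,\ \abs{b_1c_1}_p^2\}$, which is $\max\{\abs{a_1}_p,\abs{b_1c_1}_p\}^2=1$ using $\gcd(b_0,b_1)=\gcd(c_0,c_1)=1$ and then $\gcd(a_1,b_1c_1)=1$; the denominator is $1$ because $a_1\in\ZZ$, so every finite place contributes $1$. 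At the real place, $\abs{a_1}\ge1$ and $\abs{b_1c_1}\le\max(\abs{b_0},\abs{b_1})\max(\abs{c_0},\abs{c_1})$ make the numerator equal to $a_1^2\max(b_0^2,b_1^2)\max(c_0^2,c_1^2)$ and the denominator equal to $\abs{a_1}$, which yields precisely~\eqref{eq:height}.

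I do not anticipate a genuine obstacle; this is in line with the paper's remark that, given the torsor machinery of~\cite{MR2520770}, the proof is straightforward. The only steps needing a little care are the bookkeeping of which sign freedoms survive the normalisation $a_0=x=y=1$ (which produces the factor $4$) and the cancellation of the finite local factors of the height, for which the precise form~\eqref{eq:gcd} of the coprimality condition is exactly what is required.
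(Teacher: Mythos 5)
Your proof is correct and follows essentially the same route as the paper: parametrize via the torsor map $Y\to X$, normalize $a_0=x=y=1$, identify the coprimality condition~\eqref{eq:gcd} with the gcd conditions appearing in~\eqref{eq:integral-points-intro}, and compute the height from the sections~\eqref{eq:sections} by splitting into finite and archimedean places. The only difference is that you spell out the equivalence between~\eqref{eq:gcd} and $\gcd(a_1,b_1)=\gcd(a_1,c_1)=\gcd(b_0,b_1)=\gcd(c_0,c_1)=1$, which the paper merely asserts ``can be checked.''
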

\begin{proof}
  Consider an integral point as in the description~\eqref{eq:integral-points-intro}, that is, a point
  \[
    P=\left((a_0:a_1),\, (b_0:b_1),\, (c_0:c_1)\right) \in \mU(\ZZ)\subset X_0(\QQ)
  \]
  with $a_0\in\{\pm 1\}$ satisfying coprimality conditions that can be checked to be equivalent to~\eqref{eq:gcd}.
  Multiplying the first pair with $a_0$ eliminates the choice of sign in $a_0$; but multiplying any of the latter two pairs with a unit does not change the integral point $P$, resulting in the claimed $4$-to-$1$-correspondence. The point $P$ is the image of
  \begin{equation}\label{eq:int-point-on-torsor}
    (1,a_1,b_0,b_1,c_0,c_1,1,1)\in Y(\QQ)
  \end{equation}
  by the description of the torsor and blow-up morphism before Lemma~\ref{lem:description-as-difference}.
  Such a point is in the open torus $T(\QQ)$ if and only if $a_1b_0b_1c_0c_1\neq 0$.

  The choice~\eqref{eq:sections} of global sections of $\cL_1$ and $\cL_2$ induces metrics on these line bundles and, consequently, on the line bundle $\cL_1 \otimes \cL_2^\vee$ isomorphic to the log anticanonical bundle. The latter metric then induces a log anticanonical height function. Its value on the image of a point $(a_0,a_1,b_0,b_1,c_0,c_1,x,y)\in Y(\QQ)$ is
  \[
    \prod_{v}\ 
    \frac{\max
    \left\{\abs{a_1^2b_0^2c_0^2}_v,
      \abs{a_1^2b_1^2c_0^2x^2}_v,
      \abs{a_1^2b_0^2c_1^2y^2}_v,
      \abs{a_1^2b_1^2c_1^2x^2y^2}_v,
      \abs{a_0^2b_1^2c_1^2}_v
    \right\}}{
      \max\left\{\abs{a_0}_v, \abs{a_1xy}_v\right\}
    }\text{.}
  \]
  For a point as in~\eqref{eq:int-point-on-torsor} and a finite place $p$, the denominator is $\abs{a_0}_p = 1$, while the numerator is $1$ as a consequence of the coprimality condition~\eqref{eq:gcd}; hence, the factors associated with finite primes are $1$.
  Using that $\abs{a_0}=\abs{x}=\abs{y}=1$ and $\abs{a_1}\ge 1$, the factor at the archimedean place can be simplified to~\eqref{eq:height}. 
\end{proof}

\begin{remark}
  More formally, the fan $\Sigma_X$ also induces a \emph{toric $\ZZ$-scheme} $\mX$ and a universal torsor $\mY\to\mX$ (cf.~\cite[p.~187 and Rem.~8.6~(b)]{MR1679841}, building on~\cite{MR0284446}). The fiber above each integral point is a $\GmZ^5$-torsor, the isomorphism classes of which are parametrized by $H^1_{\fppf}(\Spec \ZZ; \GmZ^5) = \Cl(\QQ)^5$; as this group is trivial, each fiber is isomorphic to $\GmZ^5$, which has $2^5$ integral points. The set~\eqref{eq:int-point-on-torsor} of points in $Y(\QQ)$ then coincides with $(\mY\setminus V(a_0xy))(\ZZ)$ up to fixing the three signs $a_0,x,y\in\{\pm 1\}$, yielding a $4$-to-$1$-correspondence to integral points on the model $\mX\setminus \overline{D}$ of $U$, which can be checked to coincide with~$\mU$.
\end{remark}

\subsection{Counting}\label{ssec:toric-counting}
In other words, we now have a new description
\[
  N(B)=\frac{1}{4}\#\{(a_1,b_0,b_1,c_0,c_1)\in\ZZnz^5 \mid H(a_1,b_0,b_1,c_0,c_1)\leq B \text{, (\ref{eq:gcd}) holds}\}
\]
of the counting function, with the height function $H$ in~\eqref{eq:height}.
\begin{lemma}
  We have
  \[
    N(B)=\prod_p\left(\left(1-\frac{1}{p}\right)^2\left(1+\frac{2}{p}-\frac{1}{p^2}-\frac{1}{p^3}\right)\right) V(B)+O(B \log B (\log\log B)^3)
  \]
  with
  \[
    V(B)=\frac{1}{4}
    \int_{\substack{\abs{a_1},\abs{b_0},\abs{b_1},\abs{c_0},\abs{c_1} \geq 1,\\\abs{a_1}\max\{\abs{b_0^2},\abs{b_1^2}\}\max\{\abs{c_0^2},\abs{c_1^2}\}\leq B}}
    \diff a_1 \diff b_0 \diff b_1 \diff c_0 \diff c_1\text{.}
  \]
\end{lemma}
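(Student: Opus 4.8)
The plan is to strip off the coprimality condition \eqref{eq:gcd} by Möbius inversion and then feed the resulting lattice–point problem into the counting machinery of \cite{MR2520770}. Writing $\mathbf n = (a_1,b_0,b_1,c_0,c_1)$ and $g(\mathbf n) = \gcd(a_1b_0c_0,\,a_1b_0c_1,\,a_1b_1c_0,\,b_1c_1)$, the first step is the identity
\[
  N(B) = \frac14 \sum_{d\ge 1} \mu(d)\, M_d(B), \qquad M_d(B) = \#\{\mathbf n\in\ZZnz^5 \mid H(\mathbf n)\le B,\ d\mid g(\mathbf n)\},
\]
in which only squarefree $d$ survive. For squarefree $d$ the condition $d\mid g(\mathbf n)$ is, by the Chinese remainder theorem, a union of residue classes modulo $d$ of density $\varrho_d = \prod_{p\mid d}\varrho_p$, where $\varrho_p$ is the density of $\{\mathbf n \bmod p \mid p\mid g(\mathbf n)\}$ in $(\ZZ/p)^5$. (If the cited machinery already counts coprime tuples directly, this step can be absorbed into the application, but it is cleanest to expose it here, since it is exactly where the arithmetic factor originates.)

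The second step is to compute $\varrho_p$ by a short case distinction: if $p\mid a_1$, the first three products vanish automatically and one needs only $p\mid b_1c_1$; if $p\nmid a_1$, the four divisibilities are together equivalent to $(p\mid b_0\text{ and }p\mid b_1)$ or $(p\mid c_0\text{ and }p\mid c_1)$. Counting admissible residues yields $4p^3 - 3p^2 - p + 1$ of them, so $\varrho_p = 4p^{-2} - 3p^{-3} - p^{-4} + p^{-5}$. In particular $\varrho_p = O(p^{-2})$, so $\sum_d\mu(d)\varrho_d$ converges absolutely to the Euler product $\prod_p(1-\varrho_p)$, and the elementary polynomial identity $p^5(1-\varrho_p) = (p-1)^2(p^3+2p^2-p-1)$ identifies it with the arithmetic factor $\prod_p\bigl((1-\tfrac1p)^2(1+\tfrac2p-\tfrac1{p^2}-\tfrac1{p^3})\bigr)$ in the statement.

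The third step is the analytic input. The real region cut out by $|a_1|,|b_0|,|b_1|,|c_0|,|c_1|\ge 1$ together with $|a_1|\max(b_0^2,b_1^2)\max(c_0^2,c_1^2)\le B$ has volume exactly $4V(B)$, and it is an iterated hyperbolic region in the three blocks $a_1$, $(b_0,b_1)$, $(c_0,c_1)$; applying the estimate of \cite{MR2520770} on each residue class modulo $d$ gives $M_d(B) = \varrho_d\cdot 4V(B) + \mathcal E_d(B)$ with $\mathcal E_d(B)$ controlled uniformly in $d$. Summing against $\mu(d)$ — truncating the modulus at a suitable threshold, bounding the contribution of larger $d$ crudely by $\sum_{d>z}\varrho_d\, V(B)\ll z^{-1+\varepsilon}V(B)$, and collecting the truncated errors — produces $N(B) = \bigl(\prod_p(1-\varrho_p)\bigr)V(B) + O(B\log B(\log\log B)^3)$.

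I expect the third step to be the only delicate point: obtaining a genuinely uniform-in-$d$ error for $M_d(B)$ while the height region degenerates (each of the three nested maxima costs a factor $\log B$ in the main term and, less cleanly, a factor $\log\log B$ in the error), and then checking that summing these errors over the modulus does not spoil the bound. This is precisely the situation the machinery of \cite{MR2520770} is designed for, so once one verifies that $H$ and the congruence conditions satisfy its hypotheses, the stated error term $O(B\log B(\log\log B)^3)$ can be read off and the remaining work is bookkeeping.
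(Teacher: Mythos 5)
The arithmetic part of your proposal is correct and agrees with the paper: your local density $\varrho_p = (4p^3-3p^2-p+1)/p^5$ is computed correctly (the case distinction on $p\mid a_1$ and the simplification to ``$p\mid(b_0,b_1)$ or $p\mid(c_0,c_1)$'' when $p\nmid a_1$ both check out), and the polynomial identity $p^5(1-\varrho_p) = (p-1)^2(p^3+2p^2-p-1)$ does recover the stated Euler product. The gap is in your third step. You propose to M\"obius-invert the gcd condition and sum $M_d(B) = \varrho_d \cdot 4V(B) + \mathcal E_d(B)$ over $d$; but an error ``controlled uniformly in $d$'' of size $O(B\log B(\log\log B)^3)$ is far too weak to survive the summation, since even after truncating at $d\le z$ the accumulated error would be $\gg z\cdot B\log B(\log\log B)^3$, which swamps the main term for any useful $z$. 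You would need the per-modulus error to \emph{decay} in $d$ (roughly like $d^{-1-\varepsilon}$), and establishing such decay for the degenerate, nested hyperbolic region cut out by $H\le B$ is a genuine piece of work that your sketch does not supply. You flag exactly this as ``the only delicate point,'' which is correct, but flagging it does not close it.

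The paper avoids this entirely. Rather than expanding $\theta$ by M\"obius and counting residue classes per modulus, it feeds the multiplicative indicator $\theta = \prod_p\theta^{(p)}$ \emph{directly} into the summation machinery of~\cite{MR2520770}: one applies Proposition~3.9 to the $b_0$-sum (producing a partially averaged function $\theta_1$ and the first error term), again to the $c_0$-sum (producing $\theta_2$), and then Proposition~4.3 to the remaining three variables. Each step handles the arithmetic averaging internally, variable by variable, and the $O(B\log B(\log\log B)^3)$ error emerges from these three applications without any auxiliary modulus to sum over. In short: your density bookkeeping is right and would be a legitimate alternative way to \emph{identify} the leading constant, but as an actual proof strategy the M\"obius decomposition reintroduces a uniformity problem that Derenthal's machinery was built to avoid; you should either prove the required $d$-decay of $\mathcal E_d(B)$, or follow the paper and apply the machinery to $\theta$ directly.
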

\begin{proof}
  The counting problem can be rephrased as
  \[
    N(B)=\frac{1}{4}\sum_{\substack{a_1,b_0,b_1,c_0,c_1\in\ZZnz\\H(a_1,b_0,b_1,c_0,c_1)\leq B}} \theta(a_1,b_0,b_1,c_0,c_1) \text{,}
  \]
  where $\theta = \delta_{\gcd(a_1b_0c_0,a_1b_0c_1,a_1b_1c_0,b_1c_1)=1}=\prod_p \theta^{(p)}$ with
  \[
    \theta^{(p)}(a_1,b_0,b_1,c_0,c_1)=\begin{cases}
      0, & \text{if } p\mid a_1b_0c_0 , a_1b_0c_1 , a_1b_1c_0 , b_1c_1, \\
      1, & \text{else.}
    \end{cases}
  \]
  Aiming to first replace the sum over $b_0$ by an integral, observe that
  the height conditions imply that
  \[
  \abs{a_1b_0^2c_0^2}, \abs{a_1b_1^2c_0^2}, \abs{a_1b_0b_1c_1^2}\leq B\text{,}
  \]
  since the latter one is the geometric average of two terms in the height function. We have
  \begin{align*}
    1= \frac{B}{\abs{a_1b_0b_1c_0c_1}}
    \left(\frac{B}{\abs{a_1b_0^2c_0^2}}\right)^{-1/4}
    \left(\frac{B}{\abs{a_1b_1^2c_0^2}}\right)^{-1/4}
    \left(\frac{B}{\abs{a_1b_0b_1c_1^2}}\right)^{-1/2},
  \end{align*}
  and note that the function $\theta$
  satisfies Definition 7.9 in~\cite{MR2520770}.
  Using~\cite[Prop.~3.9]{MR2520770} with $r=1$ and $s=3$, we get
  \[
    N(B)=\sum_{\substack{a_1,b_1,\\c_0,c_1\in\ZZnz}} \theta_1(a_1,b_1,c_0,c_1) V_1(a_1,b_1,c_0,c_1;B) +O(B\log B(\log\log B)^3)\text{,}
  \]
  where $V_1(a_1,b_1,c_0,c_1;B) = \frac{1}{4}\int_{\substack{\abs{b_0}\geq 1 \\ H(a_0,b_0,b_1,c_0,c_1)\leq B}} \diff b_0$
  and $\theta_1=\prod_p\theta_1^{(p)}$ with
  \[
    \theta_1^{(p)}(a_1,b_1,c_0,c_1) = \begin{cases}
      0, & \text{ if } p \mid  a_1 c_0, a_1c_1 , b_1c_1, \\
      1-\frac{1}{p}, & \text{ if } p \mid b_1, p\nmid a_1 \text{ and } (p\nmid c_0 \text{ or } p\nmid c_1),\\
      1, & \text{ if } p \nmid b_1 \text{ and } (p\nmid c_1 \text{ or } p\nmid a_1c_0) \text{.}
    \end{cases}
  \]
  Using the geometric average of the two height conditions involving $b_0$, we can bound $V_1$ by
  \[
    V_1(a_1,b_1,c_0,c_1;B) \ll \sqrt{\frac{B}{\abs{a_1c_0c_1}}} = \frac{B}{\abs{a_1b_1c_0c_1}}
    \left(\frac{B}{\abs{a_1b_1^2c_0^2}}\right)^{-1/4}
    \left(\frac{B}{\abs{a_1b_1^2c_1^2}}\right)^{-1/4}\text{.}
  \]
  Since $\abs{a_1b_1^2c_0^2}$ and $\abs{a_1b_1^2c_1^2}$ are bounded by $B$, applying~\cite[Prop.~3.9]{MR2520770} once more (with $r=1$, $s=2$) yields
  \[
    N(B)=\sum_{a_1,b_1,c_1\in\ZZnz} \theta_2(a_1,b_1,c_1) V_2(a_1,b_1,c_1;B) +O(B\log B(\log\log B)^3)\text{,}
  \]
  where $V_2(a_1,b_1,c_1;B) = \frac{1}{4}\int_{\substack{\abs{b_0},\abs{c_0}\geq 1 \\ H(a_0,b_0,b_1,c_0,c_1)\leq B}} \diff(b_0,c_0)$
  and $\theta_2=\prod_p\theta_2^{(p)}$ with
  \[
    \theta_2^{(p)}(a_1,b_1,c_0,c_1) = \begin{cases}
      0, & \text{if } p \mid a_1, b_1 c_1\\
      \left(1-\frac{1}{p}\right)^2,
        & \text{if } p\mid b_1c_1 \text{, } p\nmid a_1 \\
      1-\frac{1}{p},
        & \text{if } p \mid b_1\text{, } p\nmid a_1c_1 \\
      1-\frac{1}{p},
          & \text{if } p \mid c_1\text{, } p\nmid a_1b_1 \\
      1, & \text{if } p \nmid b_1c_1 \text{.}
    \end{cases}
  \]
  To complete the summations, we use the fact that the height conditions imply $\abs{a_1b_0^2c_0c_1}\leq B$, and get an upper bound
  \begin{align*}
    V_2(a_1,b_1,c_1;B) &\ll \int_{\substack{\abs{c_0}\geq 1 \\ \abs{a_1b_1^2c_0^2}\leq B}}  \sqrt{\frac{B}{\abs{a_1c_0c_1}}} \diff c_0  \ll \frac{B^{3/4}}{\abs{a_1}^{3/4}\abs{b_1}^{1/2}\abs{c_1}^{1/2}} \\
    & = \frac{B}{\abs{a_1b_1c_1}} \left(\frac{B}{\abs{a_1b_1^2c_1^2}}\right)^{-1/4}
  \end{align*}
  for $V_1$. Since $\abs{a_1b_1^2c_1^2}\leq B$, Proposition~4.3 in~\cite{MR2520770} yields the desired result, for which are only left to check that the constant is indeed $\prod_p c_p$ with
  \begin{align*}
    c_p &= \frac{1}{p^2} \left(1-\frac{1}{p}\right) \left(1-\frac{1}{p}\right)^2 +
    2\frac{1}{p} \left(1-\frac{1}{p}\right)^2 \left(1-\frac{1}{p}\right) +
    \left(1-\frac{1}{p}\right)^2 \\
    & = \left(1-\frac{1}{p^2}\right)
    \left(1+\frac{2}{p}-\frac{1}{p^2}-\frac{1}{p^3} \right). \qedhere
  \end{align*}
\end{proof}

\begin{proof}[Proof of Theorem~\ref{thm:intro-count}]
  We are only left to provide an asymptotic expansion of $V(B)$. The error we introduce when removing the condition $\abs{a_1}\geq 1$ in the integral, while keeping the condition $\max\{\abs{b_0^2},\abs{b_1^2}\}\max\{\abs{c_0^2},\abs{c_1^2}\}\leq B$ implied by the others, is at most
  \begin{equation*}
    2\int_{\substack{\abs{c_0},\abs{c_1} \geq 1,\\ \max\{\abs{b_0^2},\abs{b_1^2}\}\max\{\abs{c_0^2},\abs{c_1^2}\}\leq B}} \diff b_0 \diff b_1 \diff c_0 \diff c_1
    \ \ll\ 
    \int_{\abs{c_0},\abs{c_1}\geq 1} \frac{B}{\max\{\abs{c_0^2},\abs{c_1^2}\}} \diff c_0 \diff c_1
    \ \ll\ 
     B\log B\text{.}
  \end{equation*}
  Using the symmetry in the integral, we get
  \[
    V(B)= \int_{\substack{\abs{b_0},\abs{b_1}, \abs{c_0}, \abs{c_1} \geq 1 \\ \abs{b_0}\leq \abs{b_1}, \abs{c_0} \leq \abs{c_1}, \\ \abs{b_1^2c_1^2}\leq B }} \frac{B}{\abs{b_1^2}\abs{c_1^2}} \diff b_0 \diff b_1 \diff c_0 \diff c_1 + O(B\log B) \text{.}
  \]
  Removing $\abs{b_0}\geq 1$ introduces an error of at most
  \[
    2 \int_{\substack{\abs{b_1},\abs{c_1}\geq 1\\\abs{c_0}\leq \abs{c_1}\leq \sqrt{B}/\abs{b_1}}}
    \frac{B}{\abs{b_1^2}\abs{c_1^2}} \diff b_1 \diff c_0 \diff c_1
    \ll \int_{1\leq\abs{c_1}\leq B} \frac{B}{\abs{c_1}} \diff c_1
    \ll B \log B\text{,}
  \]
  as, analogously, does removing $\abs{c_0}\geq 1$.
  We thus arrive at
  \begin{align*}
    V(B)
    &=4\int_{\substack{\abs{b_1},\abs{c_1}\geq 1,\\\abs{b_1}\leq \sqrt{B}/\abs{c_1}}} \frac{B}{\abs{b_1}\abs{c_1}} \diff b_1 \diff c_1 + O(B\log B)\\
    &=4 \int_{1\leq \abs{c_1} \leq \sqrt{B}} \frac{B\log B}{\abs{c_1}} + O(B\log B)
    =4 B (\log B)^2 + O(B\log B)\text{.}\qedhere
  \end{align*}
\end{proof}

\subsection{Interpretation of the result}\label{ssec:toric-interpretation}

As $E_1$ and $E_2$ intersect and this intersection has real points, while $M$ meets neither of the exceptional divisors, the analytic Clemens complex of $D$ consists of a $1$-simplex $A=\{E_1, E_2\}$ and an isolated vertex $\{M\}$ (which we will also simply denote by $M$). Integral points tend to accumulate around the boundary divisor; their number is dominated by those points lying near the intersection of a maximal number of boundary components. It is for this reason that the dimension of the Clemens complex is part of the exponent in the main theorem of~\cite{arXiv:1006.3345}.

\begin{figure}[ht]
  \begin{center}
    \begin{tikzpicture}
      \node (H) at (-2,0) [anchor=south]{$M$};
      \node (E1) at (0,0) [anchor=south]{$E_1$};
      \node (E2) at (2,0) [anchor=south]{$E_2$};

      \node (H) at (-2,0) [draw, shape=circle, fill=black, scale=.3]{};
      \node (E1) at (0,0) [draw, shape=circle, fill=black, scale=.3]{};
      \node (E2) at (2,0) [draw, shape=circle, fill=black, scale=.3]{};

      \draw[thick] (E1) -- (E2) node[pos=0.5, anchor=north]{$A$};
    \end{tikzpicture}
    \caption{The Clemens complex of $D$.}
  \end{center}
\end{figure}
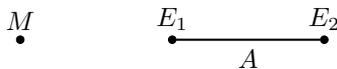

For the toric variety $X$, this does not hold. There is an obstruction to the existence of points near the intersection $E_1\cap E_2$ (and even to the existence of integral points near $E_1\cup E_2$):
Let us consider the rational function $f=a_1xy/a_0$ (in fact, a character of $T$) on $X$. It is a nonconstant regular function on $U_A=X\setminus M$, so there is an obstruction in the sense of Corollary~\ref{cor:obstruction}.

Concretely, this means the following: The function $f$ is a regular in a neighborhood of $E_1\cap E_2$, vanishing on $E_1\cap E_2$. If a point $p$ is near $E_1\cap E_2$, $\abs{f(p)}$ should thus be small. However, since $f$ is a regular function on $\mU$, its value is an integer at any integral point---and thus $\abs{f(p)}\ge 1$ except for points on the subvariety $\{f=0\}$. This means that the only integral points that are close to $E_1\cap E_2$ can be points on this subvariety (which we excluded in our counting problem).
For this reason we cannot expect a contribution of the maximal face $A$ of the Clemens complex to our asymptotic formula.
Since $f$ is even regular on neigborhoods of both $E_1$ and $E_2$, there can in fact be no integral points near either of those divisors, and we cannot expect a contribution of those two nonmaximal faces (a general phenomenon by Remark~\ref{rem:nonmaximal}).
The existence of this function also has an effect on the Picard group. That $f$ vanishes on $E_1$, $E_2$, and $M'=V(a_1)$, and that it has a pole on $M$ means that we have $[E_1]+[E_2]+[M']=[M]$ in $\Pic(X)$, and thus $[E_1]+[E_2]+[M']=0$ in $\Pic(X\setminus M)$. All three classes are nontrivial, hence the effective cone of $X\setminus M$ contains a plane. It is thus not strictly convex, and its characteristic function is identically $0$.

Since a value of the characteristic function is a factor of the leading constant in op.\ cit., this means that, for this variety, the leading constant is zero, contrary to their claim in Lemma~3.11.4.
In particular, this variety is an example for the obstruction in~\ref{ssec:obstruction}, and, more precisely, the situation considered in Theorem~\ref{thm:obstruction-construction}~\eqref{enum:nonconvex}.
The exponent of $\log B$ in Proposition~\ref{thm:intro-count} is one less than the one given by Chambert-Loir and Tschinkel. We can however interpret our asymptotic formula analogously to the formula given by Chambert-Loir and Tschinkel: There is no obstruction at the only remaining maximal face $M$ of the Clemens complex. Substituting this face for the maximal dimensional face $A$ of the Clemens complex, we get the correct asymptotic formula.
Summarizing, the situation is as follows:

\begin{proposition}\label{prop:toric-interpretation}
  \leavevmode

  \begin{enumerate}
    \item The cone $\Eff_A=\overline{\Eff}_{X\setminus M}\subset\Pic(X\setminus M)_\RR$, associated with the unique maximal-dimensional face $A$ of the Clemens complex, is not strictly convex.

    \item The cone $\Eff_M$, associated with the unique other maximal face $M$, is strictly convex.
    The constant associated with this face is $\alpha_M=1/8$, and the exponent associated with it is $b_M=b^\prime_M=3$.
  \end{enumerate}
\end{proposition}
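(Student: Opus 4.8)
The plan is to reduce both assertions to the toric data recorded in Section~\ref{ssec:toric-torsor}. Because $K=\QQ$ has a single archimedean place, the remark following Definition~\ref{def:effective} (together with the Notation in Section~\ref{ssec:divisor-group}) identifies $\Pic(U;\uA)$ with the Picard group of the toric variety $U_\uA=X\setminus\Delta_\uA$ and $\Eff_\uA$ with its pseudoeffective cone; by standard toric geometry the latter is generated by the classes of the torus-invariant prime divisors of $U_\uA$, i.e.\ by the images in $\Pic(U_\uA)$ of the degrees in Table~\ref{tab:degrees} (those of the components of $\Delta_\uA$ becoming zero). After this reduction, both parts are finite computations in $\Pic(X)\cong\ZZ^5$.

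For~(i) I would use that $\Delta_A=D-(E_1+E_2)=M=V(a_0)$, so that $U_A=X\setminus M$ and $\Pic(U_A)=\Pic(X)/\ZZ[M]$. The monomial $f=a_1xy/a_0$ has $\Pic(X)$-degree $0$ (equivalently $\deg(a_1)+\deg(x)+\deg(y)=\deg(a_0)$ in Table~\ref{tab:degrees}), is a nonconstant character of the torus, and has divisor $V(a_1)+E_1+E_2-M$ on $X$; reducing modulo $[M]$ gives $[E_1]+[E_2]+[V(a_1)]=0$ in $\Pic(U_A)$. Since each of $[E_1]$, $[E_2]$, $[V(a_1)]$ is nonzero in $\Pic(U_A)$ (none of their degrees is a multiple of $\deg(a_0)=(1,0,0,0,0)$) and all are effective, $\Eff_A$ contains the line $\RR\cdot([E_1]+[E_2])$, hence is not strictly convex — and in particular $\alpha_A=0$.

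For~(ii) I would use that $\Delta_M=E_1+E_2$, so $U_M=X\setminus(E_1\cup E_2)$ and $\Pic(U_M)=\Pic(X)/\langle[E_1],[E_2]\rangle$. As $[E_1]=\deg(x)$ and $[E_2]=\deg(y)$ are two of the five standard basis vectors, this quotient is free on $\pi^*[H_1],\pi^*[H_2],\pi^*[H_3]$, in particular torsion free, so $\alpha_M$ is defined. Under $\Pic(X)\to\Pic(U_M)\cong\ZZ^3$ the eight degrees of Table~\ref{tab:degrees} map onto the three coordinate rays ($a_0,a_1\mapsto\pi^*[H_1]$; $b_0,b_1\mapsto\pi^*[H_2]$; $c_0,c_1\mapsto\pi^*[H_3]$; $x,y\mapsto 0$), so $\Eff_M$ is the standard octant: strictly convex and smooth, generated by the $\ZZ$-basis $\pi^*[H_1],\pi^*[H_2],\pi^*[H_3]$ of $\Pic(U_M)$. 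Then Lemma~\ref{lem:descriptions-b_A} gives $b_M=\rk\Pic X-\#\cA+\#A=5-3+1=3$, and $\rk E(U_M)=0$ since a relation $a[E_1]+b[E_2]=0$ in $\Pic(X)$ forces $a=b=0$, whence $b'_M=b_M+\rk E(U_M)=3$. Finally, by Lemma~\ref{lem:description-as-difference} the class of $\omega_X(D)^\vee$ is $(1,2,2,-2,-2)$, whose image in $\Pic(U_M)$ has coordinates $(1,2,2)$ in that basis, so the smooth-cone formula $\alpha_\uA=\frac{1}{(b'_\uA-1)!}\prod_i a_i^{-1}$ yields $\alpha_M=\frac{1}{2!}\cdot\frac{1}{1\cdot 2\cdot 2}=\frac{1}{8}$.

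I do not expect a genuine obstacle here: once the single-archimedean-place reduction is set up, the proposition is essentially a pair of explicit computations with the degree table. The points that need a little care are the sign bookkeeping in $\Pic(X)\cong\ZZ^5$ and the toric input used in the first paragraph — the description of the Cox ring and its irrelevant ideal, and the fact that the pseudoeffective cone of a complete split toric variety and of its invariant open subsets is generated by the torus-invariant prime divisors — which is standard and already foreshadowed in Section~\ref{ssec:toric-torsor}.
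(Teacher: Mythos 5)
Your proof is correct and follows essentially the same approach as the paper's: reduce to the Picard-group quotient computations $\Pic(U_A)=\Pic(X)/\ZZ[M]\cong\ZZ^4$ and $\Pic(U_M)=\Pec(X)/\langle[E_1],[E_2]\rangle\cong\ZZ^3$ read off from Table~\ref{tab:degrees}, then inspect the cone generators. The one small stylistic difference is in~(i): the paper lists all seven nonzero generators of $\Eff_A$ and observes that they span the plane $\{(0,0,*,*)\}$, whereas you go directly to the single linear relation $[E_1]+[E_2]+[V(a_1)]=0$ coming from the character $f=a_1xy/a_0$ — this is a slightly leaner route to the same conclusion and also makes the link to the obstruction of Section~\ref{ssec:obstruction} more transparent, but it is not a genuinely different argument.
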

\begin{proof}
  The Picard group $\Pic(U;A)=\Pic(X\setminus M)$ is the quotient
  \[
    \Pic(X)/[M] \cong \ZZ^5/\langle (1,0,0,0,0) \rangle \cong \ZZ^4.
  \]
  The effective cone is generated by the classes of the torus-invariant prime divisors
  \begin{align*}
    &(0,0,-1,-1),\ (1,0,0,0),\ (1,0,-1,0),\ (0,1,0,0),\\
    &(0,1,0,-1),\ (0,0,1,0),\ \text{and } (0,0,0,1),
  \end{align*}
  and thus contains the plane $\{(0,0,x,y)\mid x,y\in \RR\}$; in particular, it is not strictly convex.

  The Picard group $\Pic(U;H)=\Pic(U_M)$ for $U_M=X\setminus (E_1\cup E_2)$ is the quotient
  \[
    \Pic(X)/\langle[E_1],[E_2]\rangle \cong \ZZ^5/\left\langle(0, 0, 0, 1, 0),(0, 0, 0, 0, 1)\right\rangle
    \cong \ZZ^3\text{.}
  \]
  Its rank is $b_M^\prime=3$, so it coincides with
  \[
    b_M=\rk\Pic(U) - \rk E(U) + \# \{M\} = 2 - 0 + 1.
  \]
  The effective cone $\Eff_M=\Eff_{U_M}$ is smooth and generated by
  \[
    (1,0,0),\ (0,1,0),\ \text{and } (0,0,1).
  \]
  The image of the log anticanonical class in this quotient is $(1,2,2)$. The characteristic function of~$\Eff_M$ thus evaluates to $1/4$, and 
  \[
    \alpha_M=\frac{1}{(b_M^\prime -1)!} \frac{1}{4} = \frac{1}{8}.\qedhere
  \]
\end{proof}

\begin{remark}\label{rmk:gap}
  This exemplifies a gap in the proof of~\cite[Lem.~3.11.4]{arXiv:1006.3345} of which the authors were already aware and because of which they no longer believed in the correctness of the final result of their preprint: they do not check that the characteristic function $\mathscr{X}_{\Lambda^\prime_A}(\pi(\tilde{\lambda}))$ (in the notation of op.~cit., equal to $\charfun_{\Eff_A}(\pi (\omega_X(D)^\vee)) = (b^\prime_A-1)! \alpha_A$ in our notation) is nonzero, and this example demonstrates that it can be zero, making the leading constant 0. Note that the formula in op.~cit.\ is still correct if interpreted as $N(B)=0 \cdot B(\log B)^3+O(B(\log B)^{2})$, that is, as an upper bound. Here, we prove an asymptotic formula, of the form $N(B) \sim c B(\log B)^{2}$.
\end{remark}

To finish the proof of Theorem~\ref{thm:interpreted-count}, we are only left to compute the relevant Tamagawa volumes. To this end, consider the chart
\begin{align*}
  X \setminus V(a_1b_1c_1xy) \quad & \to \quad \qquad \bAA^3,\\
  (a_0:a_1:b_0:b_1:c_0:c_1:x:y) & \mapsto \left(\frac{a_0}{a_1xy},\frac{b_0}{b_1x},\frac{c_0}{c_1y}\right)
\end{align*}
and its inverse $\bAA^3\to X$
\[
  (a,b,c)\mapsto (a:1:b:1:c:1:1:1).
\]

\begin{lemma}
  Under this chart, the integral points $\mU(\ZZ_p)$ correspond to
  \[
    \{(a,b,c)\in\ZZ_p^3 \mid \text{either }\abs{a}= 1, \text{ or } \abs{a}>1 \text{ and } \abs{b},\abs{c} \le 1\}\text{.}
  \]
\end{lemma}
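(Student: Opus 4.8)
The plan is to unwind the descriptions of $\mU$ — as an open subscheme of $\PP^1_\ZZ\times\PP^1_\ZZ\times\PP^1_\ZZ$ and through the given affine chart — and read off the conditions defining $\mU(\ZZ_p)$ in the chart coordinates. By definition, $\mU(\ZZ_p)$ is the set of $\ZZ_p$-points $\bigl((a_0:a_1),(b_0:b_1),(c_0:c_1)\bigr)$ of $\PP^1_\ZZ\times\PP^1_\ZZ\times\PP^1_\ZZ$ whose reduction modulo $p$ avoids $V(a_0)$, $l_1$ and $l_2$ (for a $\ZZ_p$-point, avoidance of these closed subschemes on the special fibre forces it on the generic fibre as well). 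Avoidance of $V(a_0)$ forces $a_0\in\ZZ_p^\times$, so I would normalise $a_0=1$, put $t=a_1\in\ZZ_p$, and represent the other two points by primitive vectors in $\ZZ_p^2$; then avoiding $l_1=V(a_1,b_1)$ modulo $p$ means exactly that $t$ and $b_1$ are not both in $p\ZZ_p$, and similarly $t,c_1$ for $l_2$. On the other hand, composing the inverse chart $(a,b,c)\mapsto(a:1:b:1:c:1:1:1)$ with the blow-up morphism $\pi$ — which in Cox coordinates is $(a_0:\dots:x:y)\mapsto\bigl((a_0:a_1xy),(b_0:b_1x),(c_0:c_1y)\bigr)$ — yields $\bigl((a:1),(b:1),(c:1)\bigr)\in X_0$; hence, under $U\cong U_0\subset X_0$, a point as above lies in the chart exactly when $a_1,b_1,c_1$ are all nonzero, and then its chart coordinates are $(a,b,c)=(a_0/a_1,b_0/b_1,c_0/c_1)$, which for the normalised point is $(1/t,\,b_0/b_1,\,c_0/c_1)$. (Note in particular that $a$ need not lie in $\ZZ_p$, only $a^{-1}$ does, so the set in the statement is to be read inside $\bAA^3(\QQ_p)$.)

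Next I would translate the two conditions into absolute values. Since $t\in\ZZ_p\setminus\{0\}$ one has $\abs{a}_p=\abs{t}_p^{-1}\ge1$, with $\abs{a}_p=1\iff t\in\ZZ_p^\times$ and $\abs{a}_p>1\iff t\in p\ZZ_p$; since $(b_0,b_1)$ is primitive, $\abs{b}_p=\abs{b_0}_p/\abs{b_1}_p$ has $\abs{b}_p\le1\iff b_1\in\ZZ_p^\times$ and $\abs{b}_p>1\iff b_1\in p\ZZ_p$, and likewise for $c$. So the $l_1$-condition becomes ``$\abs{a}_p=1$ or $\abs{b}_p\le1$'' and the $l_2$-condition ``$\abs{a}_p=1$ or $\abs{c}_p\le1$'', which together with $\abs{a}_p\ge1$ is exactly the asserted set. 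For the converse, given $(a,b,c)$ in that set one puts $t=a^{-1}\in\ZZ_p\setminus\{0\}$ and represents $b$ (resp.\ $c$) by the primitive vector $(b,1)$ if $\abs{b}_p\le1$ and by $(1,b^{-1})$ if $\abs{b}_p>1$; the resulting triple is a $\ZZ_p$-point of $\PP^1_\ZZ\times\PP^1_\ZZ\times\PP^1_\ZZ$ whose reduction avoids $V(a_0)$, $l_1$ and $l_2$, hence lies in $\mU(\ZZ_p)$, with chart coordinates $(a,b,c)$. Finally, the $\ZZ_p$-points of $\mU$ with $a_1$, $b_1$ or $c_1$ equal to zero (those on $D_{a_1}\cup D_{b_1}\cup D_{c_1}$) are not seen by this single chart, but they lie on a proper closed subvariety of $U$ and hence form a $\tau_{U,p}$-null set, which is harmless for the Tamagawa volume computed in the sequel.

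The only real work is the bookkeeping between the two models — in particular the observation, immediate from the Cox description of $\pi$, that in the chart a point of $U$ is just given by the affine ratios $a_0/a_1,\,b_0/b_1,\,c_0/c_1$ — together with keeping track of the ``primitive vector up to a unit'' normalisation; after that the back-and-forth with $p$-adic absolute values is routine, and I do not expect any substantial obstacle.
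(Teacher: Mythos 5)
Your proof is correct and follows essentially the same route as the paper's: the paper invokes the $p$-adic analogue of the torsor parametrization from Lemma~\ref{lem:4-to-1-and-height} (points $(1,a_1,b_0,b_1,c_0,c_1,1,1)$ with the gcd condition~\eqref{eq:gcd}), which is exactly your ``primitive vectors with $a_0$ a unit'' normalization, and then translates the coprimality conditions into the stated $p$-adic inequalities in the same way. You also correctly flag that the displayed set must be read in $\bAA^3(\QQ_p)$ rather than $\ZZ_p^3$ (when $|a|>1$ one has $a\notin\ZZ_p$, and when $|a|=1$ the coordinates $b,c$ range over all of $\QQ_p$), which is what the subsequent Tamagawa integral in Lemma~\ref{lem:toric-volumes} actually uses; and your remark that points outside the chart ($a_1b_1c_1=0$) form a measure-zero set is a sensible, if implicit in the paper, clarification.
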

\begin{proof}
  Analogously to~\eqref{eq:int-point-on-torsor}, $\ZZ_p$-integral points are the images of $(1,a_1,b_0:,b_1,c_0,c_1,1,1)\in Y(\QQ_p)$  for $a_1,\dots,c_1\in\ZZ_p$ such that~\eqref{eq:gcd} holds.
  In particular, $|a|=|1/a_1| \ge 1$. If $|a|> 1$, then $|a_1|<1$;
  the coprimality conditions then imply $b_1c_1\in\ZZ_p^\times$, and thus $|b|=|b_0|,|c|=|c_0| \le 1$.

  On the other hand, let $(a,b,c)$ be a point in the above set. If $|a|=1$, let $a_1$ and $a_0=a^{-1}$. If $|b|\le 1$, let $b_0=b$ and $b_1=1$, else, let $b_0=1$ and $b_1=b^{-1}$, and set $c_0$, $c_1$ analogously. 
  Finally, if $|a|>1$, let $a_0=a$, $b_0=b$, $c_0=c$, and the remaining coordinates be $1$. In each case, $((a_0:a_1),(b_0:b_1),(c_0:c_1))$ is an integral point that maps to $(a,b,c)$.
\end{proof}

\begin{lemma}\label{lem:toric-volumes}
  We have
  \[
    \tau_{M,\infty}(M(\RR))=16
    \quad \text{and} \quad
    \tau_{U,p}(\mU(\ZZ_p))=1+\frac{2}{p}-\frac{1}{p^2}-\frac{1}{p^3}
  \]
  for all primes $p$.
\end{lemma}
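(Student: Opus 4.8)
\emph{Strategy.} The plan is to compute both volumes in the single affine chart $\phi\colon\bAA^3\to X$, $(a,b,c)\mapsto\pc{a:1:b:1:c:1:1:1}$ introduced above; its complement in $U$, and likewise in $M$, is a union of coordinate divisors and hence $\tau$-negligible, so one chart suffices. In this chart $M$ is cut out by $a=0$, the exceptional divisors $E_1=V(x)$ and $E_2=V(y)$ do not appear at all, and $U$ meets the chart in $\{a\neq 0\}$.

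The first step is to write down the local density of $\tau_{(X,D),v}$ in these coordinates. Since $a$ is a local equation of $D$ in the chart, one has $\norm{1_D\otimes\diff a\wedge\diff b\wedge\diff c}_v=\abs{a}_v\norm{\tfrac{\diff a}{a}\wedge\diff b\wedge\diff c}_v$, and $\tfrac{\diff a}{a}\wedge\diff b\wedge\diff c$ is dual to the local frame $a\,\partial_a\wedge\partial_b\wedge\partial_c$ of $\omega_X(D)^\vee=\cL_1\otimes\cL_2^\vee$. Computing the norm of that frame from the metrics attached to the section sets in~\eqref{eq:sections} — tensoring with the section $a_1xy$ of $\cL_2$ to land in $\cL_1$, taking the minimum of the ratios with the five given sections of $\cL_1$, and dividing by the $\cL_2$-norm of $a_1xy$ — produces, for every place $v$,
\[
  \diff\tau_{(X,D),v}=\frac{\diff\mu_v(a)\,\diff\mu_v(b)\,\diff\mu_v(c)}
       {\min\{1,\abs{a}_v\}\cdot\max\{\abs{b}_v^2\abs{c}_v^2,\ \abs{c}_v^2,\ \abs{b}_v^2,\ 1,\ \abs{a}_v^2\}}.
\]
The only subtlety here is that the isomorphism $\omega_X(D)^\vee\cong\cL_1\otimes\cL_2^\vee$ is canonical only up to a scalar in $\QQ^\times$, which contributes trivially to any height but could in principle distort a single local density; that no such factor is present is confirmed a posteriori by the $p$-adic computation below reproducing $\#\mU(k_p)/(\#k_p)^{\dim X}$ at almost all primes.

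For a finite prime $p$, the preceding lemma identifies $\mU(\ZZ_p)$ inside the chart, up to measure zero, with $\{\abs{a}_p=1\}\cup\{\abs{a}_p>1,\ \abs{b}_p\le1,\ \abs{c}_p\le1\}$; on the first piece $b$ and $c$ are unconstrained, i.e.\ range over all of $\QQ_p$. There the density equals $\min\{1,\abs{b}_p^{-2}\}\min\{1,\abs{c}_p^{-2}\}$, and since $\int_{\QQ_p}\min\{1,\abs{t}_p^{-2}\}\diff\mu_p(t)=1+\tfrac1p$ the piece contributes $\bigl(1-\tfrac1p\bigr)\bigl(1+\tfrac1p\bigr)^2$; on the second piece the density is $\abs{a}_p^{-2}$ with $b,c$ free in $\ZZ_p$, contributing $\int_{\abs{a}_p>1}\abs{a}_p^{-2}\diff\mu_p(a)=\tfrac1p$, and the sum is $1+\tfrac2p-\tfrac1{p^2}-\tfrac1{p^3}$. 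For the archimedean place I would invoke the residue-measure recipe of~\S\ref{sssec:residue-measure} for the maximal face $M$: because $M$ is disjoint from $E_1\cup E_2$, the line bundle $\cO_X(D-M)|_M$ is trivial with $\norm{1_{D-M}|_M}\equiv1$, and the adjunction isomorphism $\omega_X(M)|_M\cong\omega_M$ identifies the residue measure, in the coordinates $(b,c)$ on $M$, with $c_\RR^{\#\{M\}}\,\diff b\,\diff c\big/\norm{\tfrac{\diff a}{a}\wedge\diff b\wedge\diff c}_{\omega_X(D),\infty}\big|_{a=0}$. Letting $a\to0$ in the density above gives $\norm{\tfrac{\diff a}{a}\wedge\diff b\wedge\diff c}_{\omega_X(D),\infty}\big|_{a=0}=\max\{1,\abs{b}\}^2\max\{1,\abs{c}\}^2$, so the measure on $M(\RR)$ splits as a product over the two $\PP^1$-factors; with $\int_\RR\diff t/\max\{1,\abs{t}\}^2=4$ and the normalisation $c_\RR^{\#\{M\}}$ this yields $\tau_{M,\infty}(M(\RR))$.

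I expect the genuine work to be concentrated in that first step — pinning down the local metric on $\omega_X(D)$, equivalently the density $\diff\tau_{(X,D),v}$, exactly, from the description $\omega_X(D)^\vee\cong\cL_1\otimes\cL_2^\vee$ and the explicit sections; once the closed formula is in hand, both the $p$-adic sum and the archimedean product are elementary. Cross-checking $\tau_{(X,D),p}(\mU(\ZZ_p))$ against $\#\mU(k_p)/(\#k_p)^{3}$ at a couple of small primes is the cleanest guard against a lost constant or a mistaken exponent.
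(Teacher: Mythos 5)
Your proposal follows the paper's proof nearly step for step: the same affine chart, the same local density obtained by evaluating the quotient metric coming from the section sets in~\eqref{eq:sections}, the same two-piece decomposition and integration for the $p$-adic volume, and the same residue-measure recipe at the real place. Your explicit density $\bigl(\min\{1,\abs{a}_v\}\cdot\max\{\abs{b}_v^2\abs{c}_v^2,\abs{c}_v^2,\abs{b}_v^2,1,\abs{a}_v^2\}\bigr)^{-1}$ is correct and, rewritten as $\max\{1,\abs{a}_v^{-1}\}/\max\{\dots\}$, agrees exactly with the integrand used in the paper (whose display labels it ``$\norm{\cdot}_{\omega_X(D)}$'' although it is really the reciprocal of that norm, i.e.\ the Tamagawa density — you handle this bookkeeping more cleanly). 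Your a~posteriori check against $\#\mU(k_p)/p^3$ is exactly right and resolves the normalisation ambiguity you flagged: $(p-1)(p+1)^2 + p^2 = p^3(1+2/p-1/p^2-1/p^3)$.

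The one thing you leave unstated — ``with the normalisation $c_\RR^{\#\{M\}}$ this yields $\tau_{M,\infty}(M(\RR))$'' — deserves a number, and here the comparison gets interesting: carrying out your recipe gives $c_\RR\cdot\bigl(\int_\RR\diff t/\max\{1,\abs{t}\}^2\bigr)^2 = 2\cdot 16 = 32$, whereas the lemma as stated reads $16$. Your approach and the paper's own proof (which computes the unrenormalised $\tau_{(M,E_1+E_2),\infty}(M(\RR))=16$ and then says it ``has to be renormalized with the factor $c_\RR=2$'') both produce $32$; the consistency check $c_\infty = \alpha_M\,\tau_{M,\infty}(M(\RR)) = 4$ against Theorem~\ref{thm:interpreted-count} with $\alpha_M=1/8$ from Proposition~\ref{prop:toric-interpretation} confirms $\tau_{M,\infty}(M(\RR))=32$, so the $16$ in the lemma statement appears to be a typo rather than an error in your argument. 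In short: your strategy is sound and essentially identical to the paper's; just make the final archimedean number explicit, and be aware that the lemma statement itself has dropped the factor $c_\RR$.
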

\begin{proof}
  In order to get a metric on the canonical bundle inducing the Tamagawa measures, consider the isomorphism from $\omega_X$ to the bundle $\cL_{\omega_X}$ whose sections are elements of degree $\omega_X$ in the Cox ring that maps $\diff a \wedge \diff b \wedge \diff c$ to $a_1^{-2}b_1^{-2}c_1^{-2}x^{-1}y^{-1}$; then pull back the metric along this isomorphism.
  For the archimedean volume, we want to integrate
  \[
    \norm{1_{E_1} 1_{E_2} \diff b \wedge \diff c}_{\omega_M(E_1+E_2)}
    = \norm{a^{-1} 1_{E_1} 1_{E_2} \diff a \wedge \diff b \wedge \diff c}_{\omega_X(D)}
  \]
  over $M(\RR)$
  (regarding $a^{-1}$ as an element in $\Gamma(U,\cO(-M))\subset \Gamma(U,\mathcal{K}_{\bAA^3})$). Outside $M$, we have $a^{-1}=a^{-1} 1_{M}$, where the first factor is a section in $\Gamma(\bAA^3\setminus M,\cO_{\bAA^3})$, and thus
  \[
    \norm{1_{E_1} 1_{E_2} \diff b \wedge \diff c}_{\omega_M(E_1+E_2)}
    = \lim_{a\to 0} \left(
    \abs{a^{-1}}
    \norm{1_M 1_{E_1} 1_{E_2} \diff a \wedge \diff b \wedge \diff c}_{\omega_X(D)}\right).
  \]
  The norm $\norm{1_M 1_{E_1} 1_{E_2} \diff a \wedge \diff b \wedge \diff c}_{\omega_X(D)}$ is
  \[
    \frac{
      \max\{\abs{a_0},\abs{a_1xy}\}
    }{
      \abs{a_0xy}\max\{
        \abs{a_1^2b_0^2c_0^2},
        \abs{a_1^2b_1^2c_0^2x^2},
        \abs{a_1^2b_0^2c_1^2y^2},
        \abs{a_1^2b_1^2c_1^2x^2y^2},
        \abs{a_0^2b_1^2c_1^2}
      \}
    }
  \]
  at a point $\pc{a_0:a_1:b_0:b_1:c_0:c_1:x:y}\in X(\QQ)$ given in Cox coordinates. Evaluating in the image of a point $(a,b,c)$ yields
  \[
    \norm{\diff b \wedge \diff c}
    = \lim_{a\to 0}\frac{
      \abs{a}\max\{1,\abs{a}\}
    }{
      \abs{a}\max\{
      \abs{b^2c^2}, \abs{c^2}, \abs{b^2}, 1 , \abs{a^2}
    \}}
    = \frac{1}{\max\{1,\abs{b}^2\}\max\{1,\abs{c}^2\}}
  \]
  Integrating this results in the archimedean Tamagawa volume
  \[
    \tau_{(M,E_1+E_2),\infty} (M(\RR)) =
    \int_{\RR^2} \frac{1}{\max\{1,\abs{b}^2\}\max\{1,\abs{c}^2\}} \diff b \diff c = 16,
  \]
  which has to be renormalized with the factor $c_\RR=2$.

  In order to determine the Tamagawa volumes at the nonarchimedean places, we integrate
  \[
    \norm{1_M 1_{E_1} 1_{E_2} \diff a \wedge \diff b \wedge \diff c}_{\omega_X(D)}
  \]
  over $\mU(\ZZ_p)$.
  Using the same description as above, this results in
  \[
    \left(1-\frac{1}{p}\right)
    \int_{b,c\in\QQ_p}
    \frac{1}{\max\{1,\abs{b}^2\}\max\{1,\abs{c}^2\}} \diff b \diff c
    + \int_{\substack{\abs{a}>1\\ \abs{b},\abs{c}\le 1 }} \frac{1}{\abs{a}^2} \diff a\diff b \diff c.
  \]
  The first integral is
  \[
    \left(\int_{b\in\QQ_p} \frac{1}{\max\{1,\abs{b}^2\}} \diff b\right)^2
    = \left(1 + \int_{\abs{b}>1} \frac{1}{\abs{b}^2} \diff b\right)^2
    = \left(1+\frac{1}{p}\right)^2\text{,}
  \]
  and the second is
  \[
    \int_{\abs{a}>1} \frac{1}{\abs{a}^2} \diff a = \frac{1}{p} \text{,}
  \]
  so, in total, we get
  \[
    \tau_{U,p}(\mU(\ZZ_p)) = \left(1-\frac{1}{p}\right)\left(1+\frac{1}{p}\right)^2  + \frac{1}{p}= 1+\frac{2}{p} - \frac{1}{p^2} - \frac{1}{p^3}\text{.} \qedhere
  \]
\end{proof}

\begin{proof}[Proof of Theorem~\ref{thm:interpreted-count}]
  Comparing $\alpha_\uA$ as computed in Proposition~\ref{prop:toric-interpretation} and the descriptions of the Tamagawa volumes in Lemma~\ref{lem:toric-volumes} to the asymptotic formula in Theorem~\ref{thm:intro-count} finishes the proof.
\end{proof}

\bibliographystyle{alpha}

\bibliography{../bibliography/bibliography}

\end{document}